\def\titlerunning#1{\gdef\titrun{#1}}
\def\author#1{\gdef\autrun{\def\and{\unskip, }#1}\gdef\@author{#1}}
\def\address#1{{\def\and{\\\hspace*{18pt}}\renewcommand{\thefootnote}{}%
\footnote {#1}}%
\markboth{\autrun}{\titrun}}
\def\email#1{e-mail: #1}
\def\subjclass#1{{\renewcommand{\thefootnote}{}%
\footnote{\emph{Mathematics Subject Classification (2010):} #1}}}
\def\keywords#1{\par\medskip
\noindent\textbf{Keywords.} #1}
\newtheorem{thm}{Theorem}[section]
\theoremstyle{definition}
\newtheorem{rem}[thm]{Remark}
\numberwithin{equation}{section}
\newtheorem{prop}[thm]{Proposition}
\def\R{\mathbb{R}}
\def\Z{\mathbb{Z}}
\def\LL{\mathcal L}
\def\range{\mathrm{range}\,}
\def\dd{\mathrm{d}}
\def\({\left(}
\def\){\right)}
\def\nullo{\mathrm{null}\,}
\def\form{\mathcal C}
\DeclareMathOperator{\e}{e}
\DeclareMathOperator{\supp}{supp\,}
\tikzset{vertex/.style={circle,fill=black,inner sep=2pt},
ctVertex/.style={diamond,fill=black,inner sep=2pt},
bigvertex/.style={circle,fill=black,inner sep=4pt},
smallvertex/.style={circle,fill=black,inner sep=1pt},
E/.append style={fill=white,draw},
probeEP/.style={circle,fill=black,draw,inner sep=2pt,
  prefix after command= {\pgfextra{\tikzset{every pin/.style = {pin edge={decorate,decoration={snake,amplitude=2pt,segment length =4pt}}}}}}
},
bareProbeEP/.style={rectangle,fill=black,draw,inner sep=3pt,
  prefix after command= {\pgfextra{\tikzset{every pin/.style = {pin edge={decorate,decoration={snake,amplitude=2pt,segment length =4pt}}}}}}
},
nuEP/.style={circle,fill=white,draw, inner sep=2pt},
linelabel/.style={sloped,above,very near start, inner sep=1pt,execute at begin node=$\scriptstyle,execute at end node=$},
baseline=(current  bounding  box.center),doubled/.style={double distance= 1pt,line width=1.5pt}}
\begin{document}


\baselineskip=17pt


\titlerunning{LRO in atomistic models for solids}

\title{Long range order in atomistic models for solids}

\author{Alessandro Giuliani
\and 
Florian Theil}

\date{}

\maketitle

\address{A. Giuliani: Universit\`{a} degli Studi Roma Tre, Dipartimento di Matematica e Fisica, L.go S. L. Murialdo 1, 00146 Roma, Italy; and: Centro Linceo Interdisciplinare {\it Beniamino Segre}, Accademia Nazionale dei Lincei, Palazzo Corsini, Via della Lungara 10,
00165 Roma, Italy; \email{giuliani@mat.uniroma3.it}
\and
F. Theil: Warwick University, Mathematics Institute, Coventry, CV47AL, UK;
\email{f.theil@warwick.ac.uk}}

\subjclass{Primary 74A25; Secondary 82D25, 82B26}


\begin{abstract}
The emergence of long-range order at low temperatures in atomistic systems with continuous symmetry is a fundamental, yet poorly understood phenomenon in Physics. To address this challenge we study a discrete microscopic model for an elastic crystal with dislocations in three dimensions, previously introduced by Ariza and Ortiz. The model is rich enough to support some realistic features of three-dimensional dislocation theory, most notably grains and the Read-Shockley law for grain boundaries, which we
rigorously derive in a simple, explicit, geometry.
We analyze the model at positive temperatures, in terms of a Gibbs distribution with energy function given by the Ariza-Ortiz Hamiltonian plus a contribution from the dislocation cores.
Our main result is that the model exhibits long range positional order at low temperatures. The proof is based on the tools of discrete exterior calculus, together with cluster expansion techniques.

\keywords{Ariza-Ortiz model, dislocations, grain boundaries, Read.Shockley law, exterior discrete calculus, cluster expansion}
\end{abstract}

\section{Introduction}

The derivation of the low temperature properties of crystalline solids, starting from a microscopic, atomistic, model, represents a formidable challenge both for theoreticians and
practitioners. Realistic atomistic models for solids are characterized by the invariance under the Euclidean symmetries of translations and rotations, which are supposedly broken at low temperatures,
as the very existence of crystals in nature witnesses. Unfortunately, from a mathematical point of view, our understanding of the phenomenon of continuous symmetry breaking is still quite limited and, as a consequence,
the mathematical theory of crystalline solids is still in a primitive stage. Even at zero temperature, there are only limited results on the ground state structure of the system: in particular, there are only
few, highly simplified, atomistic models for which one can rigorously prove that the ground state is periodic \cite{BPT14, EL09, FT15, T06}. Even less is known at positive temperatures where most rigorous results are restricted to lattice systems, e.g. \cite{AKM16, G19, HMR14}. A notable exception is \cite{Au15} which establishes the existence of orientational order in a particle system without lattice structure.

Heuristically, we expect that the low energy physics of crystalline materials is dominated by dislocations defects, which interact among each other via an electrostatic-like interaction, and
by the formation of grains, which correspond to portions of the crystal with some fixed rotation relative to a background orientation. The grain boundaries are collections of dislocations that are geometrically necessary
to connect differently oriented lattices. Remarkably, even though isolated dislocations interact among each other via a Coulomb-like interaction, the energy of a grain appears to scale like the size of its boundary.
For a recent mathematical account of this phenomenon, see~\cite{LL17}.

There is significant literature on continuum theories for dislocations, see \cite{HL82} for a starting point. Typically dislocations are represented as closed loops, the energy of a single dislocation loop is proportional to its length,
\cite{GM06}. Discrete dislocation line dynamics represent a very popular simulation technique for studying plasticity since the early 1990s, see e.g. \cite{BC06} and \cite{H18} for a recent account of mathematical results. Continuum models for dislocation configurations have been studied successfully within the framework of $\Gamma$-convergence, see e.g. \cite{CGM11, GLM10, GM05, GM06}. However, very few results are available on the microscopic derivation of effective continuum theories for dislocations or grain boundaries, see \cite{FPP18, LL17}.

Note that
macroscopic effects like plasticity or grain boundary motion are strongly temperature dependent: therefore, it is of particular interest to develop a thermal theory of dislocations, including an equilibrium theory based on the Gibbs distribution.

\medskip

In this paper, we consider a simple atomistic model for crystalline solids, previously introduced by Ariza and Ortiz \cite{ao05}. The Ariza-Ortiz model, even if highly simplified, possesses some realistic features expected in real solids,
which make it a good starting point for a quantitative understanding of the effects of dislocations and of the formations of grain boundaries. In particular, it has been used to perform discrete dislocation calculations of defects and grain
boundaries in graphene, see \cite{AO10, AOS10, AM16}. The Ariza-Ortiz model is a discrete model where the interaction energy depends not only on the positions of the particles, but also on the bond structure, see eq.\eqref{AOnoec} below for its precise definition. The model shares some analogies with the Villain model for rotators, in that the energy satisfies an exact additive decomposition property, which allows us to distinguish clearly the elastic (`spin wave') degrees of freedom, and those associated with dislocation defects, see also \cite{KT,NH,Y}. The simplicity of the model allows us to derive sharp estimates on the energy of the grains, on the one hand, and to rigorously characterize key properties of the equilibrium distribution of dislocations at positive temperatures, on the other.

Concerning the kinematics of the Ariza-Ortiz model, we confirm that it supports polycrystalline configuration with energy cost bounded from above by
the size of the grain boundary (Theorem~\ref{grain-scaling}). We also derive sharp asymptotic bounds, albeit in a simpler two-dimensional setting (Theorem~\ref{thm.RS}).
Our results confirm that the energy density of grain boundaries for small angles is consistent with the Read-Shockley law \cite{RS50}
\begin{equation}\label{readsh}\gamma(\theta) =\theta(c_0 - c_1 \log \theta) + o(\theta), \quad 0<\theta\ll 1, \end{equation}
where $\gamma(\theta)>0$ is the grain boundary energy density and $\theta$ is the orientation difference. See also \cite{LL17}, where the authors establish an upper bound consistent with the Read-Shockley law.

Concerning positive temperatures, we introduce a Gibbs distribution with energy function given by the Ariza-Ortiz Hamiltonian plus a contribution from the dislocation cores.
Our main result is that for low temperatures the system exhibits positional long-range order (Theorem~\ref{lro_simp.thm}).
In particular, this implies that polycrystalline configurations have low probability. To the best knowledge of the authors these are the first rigorous results on dislocations configurations at positive temperature in a microscopic,
atomistic, model. See also \cite{BCHMR18}, where similar results have been recently obtained in the context of a related mesoscopic model for crystalline solids.
The proof of long-range order is based on the strategy developed in \cite{fs82, KK86} for the three-dimensional $XY$ model and other lattice models with Abelian continuous symmetry. The key steps consist in: first, a
reduction of the model to an effective model for the dislocation defects, interacting via a tensorial analogue of the electrostatic force; second, a cluster expansion treatment of the latter. The computation of the
Green function characterizing the effective interaction among dislocations requires some care, in that the derivation must be compatible with the underlying symmetries of the system, most notably linearized rotational symmetry.
This is the key novel feature of the Ariza-Ortiz model, compared to other `scalar' models treated previously. In this part, we take advantage of the tools of exterior discrete calculus, some aspects of which we briefly review below,
for the reader's convenience.

\medskip

The paper is organized as follows. In Sect.\ref{secAO} we define the Ariza-Ortiz model and discuss its symmetries. In Sect.\ref{sec.3} we state our main results, first on the existence of long-range order at positive, low enough, temperatures, then on the energy scaling of grains and grain boundaries. In Sect.\ref{sec.DEC} we review a few selected aspects of exterior discrete calculus, required in the proofs of our main results. In Sect.\ref{proof_lro.sec},
we prove Theorem \ref{lro_simp.thm} on long-range positional order. In Sect.\ref{sec.RS}, we prove Theorem~\ref{thm.RS} on the asymptotic computation of the energy of a grain and derive the Read-Shockley law. Finally, in the appendices we collect a few technical results, including the explicit definition of the lattice cellular complex for the face centered cubic lattice, and the asymptotic computation of the correlation decay in the `spin wave approximation'.

\section{The Ariza-Ortiz model}\label{secAO}
Let $\LL \subset\R^3$ be the face-centered cubic (FCC) lattice, i.e. $\LL = \{ n_1 b_1 + n_2 b_2+ n_3 b_3 \; : \; n \in \Z^3\}$  where
\begin{equation}\label{b123} b_1 = \frac{1}{\sqrt{2}} \begin{pmatrix} 0\\1\\1\end{pmatrix},  b_2=\frac{1}{\sqrt{2}} \begin{pmatrix} 1\\0\\1\end{pmatrix}, b_3=\frac{1}{\sqrt{2}} \begin{pmatrix} 1\\1\\0\end{pmatrix},
\end{equation}
and let
\begin{equation}\label{LN}\Lambda=\Lambda^{(N)}=\{n_1b_1+n_2b_2+n_3b_3: \ n_i=\lfloor -N/2+1 \rfloor,\ldots,\lfloor N/2\rfloor,\quad i=1,2,3\} \subset \LL\end{equation}
be a finite box.
We will write $x\sim y$ if $x,y \in \LL$ are nearest neighbors, i.e. $|x-y| =1$. Note that each lattice point $x$ has exactly twelve nearest neighbors and $x\sim y$ if and only if $y=x\pm b_l$, with $l\in\{1,\ldots,6\}$, and
$b_1,b_2,b_3$ as in \eqref{b123}, $b_4:=b_3-b_2$, $b_5:=b_1-b_3$, $b_6:=b_2-b_1$.
The Hamiltonian is a quadratic form acting on pairs
$(u,\sigma)$
\begin{equation}\label{AOnoec} H_\mathrm{AO}(u,\sigma) = \frac{1}{2} \sum_{x\sim y} [(u(y)-u(x) -\sigma(x,y))\cdot (y-x)]^2.\end{equation}
where: the displacement $u: \LL\to \R^3$ satisfies Dirichlet boundary conditions on $\Lambda$, $u(x) = 0$ if $x \not \in \Lambda$; $\sigma:\{(x,y) \in \LL^2 \; : \; x\sim y\} \to \LL$ assigns a lattice-valued {\it slip} to each nearest neighbor pair $(x,y)$ and it also satisfies Dirichlet boundary conditions on $\Lambda$, that is, $\sigma(x,y)=0$ if $x,y\not\in\Lambda$. 
We assume that $\sigma(x,y)=-\sigma(y,x)$, so that the energy associated with a nearest neighbor pair $(x,y)$ is independent of the orientation. Moreover,
we let $\sum_{x\sim y}$ be the sum over the unordered pairs of nearest neighbor sites.
The interpretation of $\sigma$ is that
it accounts for crystallographic slip where atoms are being displaced in the direction of the Burger's vector across the slip plane. The deformed configuration is given by the collection of points $x+u(x)$ with $x \in \Lambda$.
The functional $H_\mathrm{AO}$ accounts for the elastic energy which is caused by the displacement $u$ in the presence of the slip field $\sigma$. It should be interpreted as the quadratic approximation of a
more complex, non-linear energy. The model has been introduced in \cite{ao05}, which we refer to for details about its microscopic interpretation. See also Appendix \ref{app:interpretation} for a
heuristic derivation of the model and a discussion about its microscopic meaning.

Note that we study the Ariza-Ortiz model in setting of the FCC lattice because it represents the only simple 3-dimensional lattice, involving only nearest neighbor interactions, satisfying a rigidity estimate a' la Korn, that is, $\sum_{x\sim y}[(u(x)-u(y))\cdot (x-y)]^2\gtrsim \sum_{x\sim y}|u(x)-u(y)|^2$, see eq.~\eqref{c0} below.

\subsection{Symmetries of the Ariza-Ortiz model} \label{sym.sec}
Consider the infinite volume version of the Ariza-Ortiz energy \eqref{AOnoec}, obtained by assuming that $u$ and $\sigma$, rather than satisfying Dirichlet boundary conditions on $\Lambda$, decay sufficiently fast at infinity so that the infinite sum involved in the definition of the energy makes sense.
Such infinite volume Ariza-Ortiz energy is invariant under three different types of symmetry transformations:
\begin{enumerate}
\item Translations: $u \mapsto u + \tau$ where $\tau \in \R^3$ is a constant vector.
\item Linearized rotations: $u \mapsto u + s$ where $s(x) = Sx$ and $S \in \R^{3 \times 3}$ is a
skew symmetric matrix.
\item Gauge invariance: $(u,\sigma) \mapsto (u+v,\sigma+\dd v)$ where $v:\LL \to \LL$ and $\dd v(x,y):= v(y)-v(x)$.
\end{enumerate}
The presence of the first and third symmetry is a direct consequence of the `gradient structure' of the Ariza-Ortiz energy, that is, of the fact that it depends on $u,\sigma$ only upon the combination $\dd u-\sigma$.
Invariance under linearized rotations is an approximation of the invariance under rotations: $u(x) \mapsto R(x+u(x))-x$ for all $R \in SO(3)$.  The invariance of the Ariza-Ortiz energy under linearized rotations is a consequence of the observation that $(\dd u(x,y) + S\,(y-x))\cdot (y-x) = \dd u(x,y)\cdot (y-x)$ for any skew-symmetric matrix $S$. Previously studied models such as the Villain XY model, see, e.g., \cite{FS81, fs82}, are invariant under the analogues of the first and the third symmetries, but in that context there is no analogue of the second symmetry, which is, instead, a distinctive feature of
microscopic models of elasticity.
There are significant consequences resulting from the invariance of linearized rotation, most notably the existence of grains, cf. Theorem~\ref{grain-scaling}.

Note that, in a finite box $\Lambda$ with Dirichlet boundary conditions, the first and second symmetries are broken. On the contrary, the third symmetry is also present in finite volume, provided
that $v$ is chosen to satisfy Dirichlet boundary conditions like $u$. Physically, gauge invariance corresponds to the possibility of conveniently re-labelling the atoms and, correspondingly, of re-defining the nearest neighbours, without any energy cost. Mathematically, gauge invariance implies that the energy only depends on the dislocation part of $\sigma$, defined in the following section.

\section{Main results: Long-range order and grain boundaries}\label{sec.3}
\subsection{Existence of long-range order}
Before defining the Boltzmann-Gibbs distribution we recall from {Sect.~\ref{sym.sec}} the notation $\dd u(x,y)= u(y)-u(x)$ and
that the Ariza-Ortiz energy with Dirichlet boundary conditions on $\Lambda$ is gauge invariant in the sense that
$$ H_{\mathrm{AO}}(u+v,\sigma+\dd v) = H_{\mathrm{AO}}(u,\sigma)$$
for each $v:\LL \to \mathcal L$ that satisfies Dirichlet boundary conditions on $\Lambda$. To remove this degeneracy we say that two slip fields $\sigma$ and $\sigma'$ are equivalent if $\dd\sigma=\dd\sigma'$, with
\begin{align*}
&\dd \sigma:\{(x_1,x_2,x_3) \in \LL^3 \; : \; x_1 \sim x_2\sim x_3\sim x_1\} \to \LL, \\
&\dd \sigma(x_1,x_2,x_3)= \sigma(x_1,x_2)+ \sigma(x_2,x_3)+\sigma(x_3,x_1).
\end{align*}
The function $q=\dd\sigma$ is called the dislocation part of $\sigma$. {Note that, if $\sigma$ satisfies Dirichlet boundary conditions, then also $q$ does, 
 i.e., $q(x_1,x_2,x_3)=0$ for $x_1,x_2,x_3\in\Lambda^c$.}
A discussion of the link between slip fields without dislocations ($\dd \sigma = 0$) and the existence of $v:\LL \to \R^3$ such that $\dd v = \sigma$ can be found in Sect.~\ref{sec.DEC}.

The field $\dd \sigma$ assigns to each triangular face $f$, identified with a $3$-cycle of nearest neighbor sites, a current flowing orthogonally to $f$, in the direction induced by the orientation of $f$.
Typically $q =\dd \sigma$ is decomposed into a sum of {\it dislocation lines}, i.e., $q = \sum_j q_j$, where the supports of the  $q_j$ are the maximal connected components of $\supp q$.
Each of these $q_j$ can be thought of as a current loop. It will be shown in Sect.~\ref{sec.DEC} that $\dd q=0$, where $\dd q$ is the discrete analogue of the curl of $q$: it is a function defined on the elementary cells of $\mathcal L$ that, on each cell, equals the sum of the values of $q$ on the faces of the cell, with the appropriate orientation. In terms of the current loop representation of $q$,
this curl-free condition means that the current loops are closed.

{Denoting by $\mathcal S$ the set of representatives of non-equivalent slip-fields satisfying Dirichlet boundary conditions (i.e., vanishing on edges contained in $\Lambda^c$),}
we are now in a position
to define the expectation of a {gauge-invariant} observable $\varphi$ {(i.e., $\varphi(u,\sigma)=\varphi(u+v,\sigma+\dd v)$ for any $v:\mathcal L\to\mathcal L$ supported in $\Lambda$)} with respect to the Boltzmann-Gibbs distribution by
\begin{equation}
\mathbb{E}_{\beta,\Lambda}(\varphi) =
\frac{1}{Z_{\beta,\Lambda}}\sum_{\sigma \in \mathcal S}\,\int du\, \e^{-\beta\,(H_\mathrm{AO}(u,\sigma) +W(\dd \sigma ))}\, \varphi({u,\sigma}),
\label{Ga_nec}\end{equation}
with
$$ Z_{\beta,\Lambda} = \sum_{\sigma \in \mathcal S}\,\int du\, \e^{-\beta\,(H_\mathrm{AO}(u,\sigma) +W(\dd \sigma ))}$$
and the integral runs over $\mathbb R^{3|\Lambda|}$ (recall that $u(x)\in\mathbb R^3$, for $x\in\mathcal L$, and $u(x)\equiv 0$ if $x\in\Lambda^c$).
The function $W$ represents the energy contribution of the dislocation cores and has the form
\begin{equation}\label{Wq}W(q) = \sum_{f} w(q(f)),\end{equation}
where $w$ is even and the sum runs over the unordered set of faces of $\mathcal L$ (i.e., the set of $3$-cycles of nearest neighbor sites in $\mathcal L$, modulo their orientation).
We assume that $$w(q(f))\ge w_0 |q(f)|^2,$$
for some positive constant $w_0$. We remark that the purely additive structure of the core energy, Eq.\eqref{Wq}, is assumed here just for simplicity: our proofs could be adapted to the case of correlated energies, provided
their correlation decays to zero sufficiently fast at large distances, but we prefer to stick to the assumption of exact additivity here, in order to keep technicalities to a minimum.

{Note that the condition that $\varphi$, like $H_{\mathrm{AO}}$, is invariant under gauge transformations} is a natural requirement: essentially, we are saying that slip fields
differing by exact forms are physically un-distinguishable.

We will be specifically interested in the following observable{s}: for $x,y \in \LL$ and $v_0\in\LL^*$ (the dual of $\mathcal L$, whose basis vectors $m_1,m_2,m_3$ are defined by the conditions
$b_i\cdot m_j=2\pi \delta_{i,j}$, see \eqref{eq:mi}), we let
$${\varphi_{v_0;x}(u)=\cos(u(x)\cdot v_0)}$$
and {we let}
$$\varphi_{v_0;x,y}({u})=\cos((u(y)-u(x))\cdot v_0)$$
{be the corresponding two-point observable.}
{It is apparent that both $\varphi_{v_0;x}(u)$ and $\varphi_{v_0;x,y}(u)$ are gauge invariant, thanks to the condition that $v_0\in\mathcal L^*$.}
The {one-point} observable {$\varphi_{v_0;x}$} is appropriate for {testing the breaking of translational symmetry (i.e., of symmetry 1 in Sect.\ref{sym.sec})
in the presence of Dirichlet boundary conditions: in fact, it is peaked at $u(x)=0$ mod $\mathcal L$ (in particular, it is not invariant under translations $u(x)\to u(x)+\tau$)}, 
and it has zero average under translations of $u(x)$. {Similarly, the corresponding two-point observable $\varphi_{v_0;x,y}$ is appropriate for testing the existence of 
positional long range-order.}

We define the expectation{s}
\begin{equation} \label{def_order.eq}\begin{split}
&{c_{\beta,\Lambda}(v_0;x):=\mathbb E_{\beta,\Lambda} (\cos(u(x)\cdot v_0))},\\
&c_{\beta,\Lambda}(v_0;x,y):=\mathbb E_{\beta,\Lambda} (\cos((u(y)-u(x))\cdot v_0)).\end{split}
\end{equation}
We are interested in taking the thermodynamic limit $\Lambda\to\mathcal L$ that, for boxes $\Lambda=\Lambda^{(N)}$ like in \eqref{LN}, simply indicates the limit $N\to\infty$.
\begin{thm} \label{lro_simp.thm}
{Let $v_0\in\LL^*$.} There are positive constants $C, \beta_0, {r_0},$ which do not depend on $x,y$ and $\beta$
such that, if $\beta > \beta_0$ and {$|x-y|>r_0$}, 
\begin{equation}\begin{split} & {\liminf_{\Lambda\to\mathcal L} c_{\beta,\Lambda}(v_0;x)\ge e^{-C/\beta}},\\
& \liminf_{\Lambda\to\mathcal L} c_{\beta,\Lambda}(v_0;x,y)\ge e^{-C/\beta}.\end{split}\label{eq.thm1}\end{equation}
\end{thm}

Eqs.\eqref{eq.thm1} establish the existence of \textit{{translational symmetry breaking}} and {\it long range {positional} order} in the three-dimensional setting: {
in particular, the first equation implies that, for $\beta$ large enough, the weak limit as $\Lambda\to\mathcal L$ of the 
Gibbs state $\mathbb E_{\beta, \Lambda}$ breaks the translational symmetry $u(x)\to u(x)+\tau$.} {Conversely}, at small enough $\beta$, {it is expected that the limiting Gibbs 
state is invariant under such translational symmetry, that $c_{\beta,\Lambda}(v_0;x)$ decays exponentially to zero in the distance
$\textrm{dist}(x,\Lambda^c)$ as $\Lambda\to\mathcal L$}, and that $\liminf_{\Lambda\to\mathcal L}c_{\beta,\Lambda}(v_0;x,y)$
decays exponentially to zero {as $|x-y|\to\infty$}, because of the screening phenomenon \cite{BM99}, but this remains to be proved for the Ariza-Ortiz model.
The limiting value ${\liminf}_{\Lambda\to\mathcal L} {c_{\beta,\Lambda}(v_0;x)}$, which passes from being positive at large $\beta$ to being (at least conjecturally)
identically zero at low $\beta$, has the interpretation of
order parameter for positional order.

The reason why we write $\liminf_{\Lambda\to\mathcal L}$ rather than $\lim_{\Lambda\to\mathcal L}$ in \eqref{eq.thm1} is that
a priori we do not know whether the limit exists: our system is of Coulomb-type and the standard theory of the existence of the thermodynamic limit does not apply directly.
There are several results in the literature about the existence of the thermodynamic limit of Coulomb systems in 3D, but they do not apply literally to our case,
see e.g. \cite{FP78, LL72} and the review \cite{BM99} and references therein. It is likely that they could be adapted to our context as well, but this is beyond the scope of our paper.

Theorem \ref{lro_simp.thm} is a consequence of Theorem~\ref{lro.thm} which is stated with proof in Section~\ref{proof_lro.sec}. Theorem~\ref{lro.thm} and its proof
provide a more detailed estimate than \eqref{eq.thm1}: in particular, they show that
{both $c_{\beta,\Lambda}(v_0;x)$ and $c_{\beta,\Lambda}(v_0;x,y)$} factor exactly into the product of two contributions, one associated with a Gaussian average (the `spin wave contribution') and one associated with an effective theory for the dislocation cores.
The first contribution is explicit, and asymptotically equal, {as $\Lambda\to\mathcal L$ and $|x-y|\to\infty$, to $e^{-C_0/(2\beta)}$ resp. $e^{-C_0/\beta}$ 
for the one-point resp. two-point observable}, for an explicit constant $C_0$. The second is bounded via cluster expansion and the use of Jensen's inequality, following the same strategy of \cite{fs82,KK86}, and leads to an exponentially small correction to 
{the spin-wave contribution.} Note that the assumption $d=3$ plays a key role both in the computation of the spin wave contribution 
(Section \ref{sec.swme}) and in the estimate of the correction due to the dislocation cores (Section~\ref{sec.disme}).

\medskip

Theorem \ref{lro_simp.thm} is analogous to \cite[Theorem 3]{BCHMR18} that, however, refers to long-range {orientational} order in a {\it mesoscopic} model for a solid with dislocations.
An important difference between our setting and the one in \cite{BCHMR18} concerns the modeling part. While our model,
even though simplified, has a direct microscopic interpretation, theirs involves an auxiliary set of currents, whose microscopic interpretation is not immediate. It is likely that the model in \cite{BCHMR18}
could be obtained starting from a more fundamental atomistic one, via a suitable coarse graining procedure. It would be very interesting to substantiate this expectation by rigorous results.
From a technical point of view, the tensorial structure of the Ariza-Ortiz Hamiltonian on the FCC lattice introduces some extra difficulties, compared to \cite{BCHMR18}, in the reduction
to an effective model of dislocations and in the treatment thereof, which we solve thanks to the tools of discrete exterior calculus, reviewed below. On the other hand, the general strategy of our proof is
analogous to that in \cite{BCHMR18}, in that both rely on the ideas of \cite{fs82,KK86}.

\medskip

If the dimension is one or two then the existence of long-range positional order is prevented by the Mermin-Wagner theorem \cite{M68, MW66}, see also \cite{FP81,ISV,Pf81,Rich}. However,
the Mermin-Wagner theorem does not prevent the possibility of having orientational order in two dimensions; actually, spin wave theory suggest that orientational order should be present in two dimensions
\cite{M68}. It is not a priori clear, not even heuristically or intuitively, whether the presence of dislocations, and in particular of grains, can destroy the prediction based on spin wave theory. Therefore, it would be extremely interesting to prove or disprove the existence of long-range orientational order in a concrete atomistic model for a two-dimensional elastic crystal with dislocation. Probably, the simplest such model is the analogue of the model studied in this paper,
in a two-dimensional setting (e.g., in the case that the 3D FCC lattice is replaced by the 2D triangular lattice). We expect that the methods developed in \cite{FS81} for the study of the Kosterlitz-Thouless transition in the 2D Villain rotator model may be adapted to such a case. We plan to come back to this problem in a future publication.

\subsection{Energy scaling of grains}
An important question which cuts to the core of the crystal problem is whether the Ariza-Ortiz model accounts for structures such as grain-boundaries. Grains only exist because of the additional symmetry of atomistic systems: (linearized) rotational symmetry, sometimes also referred to as `objectivity'. More precisely, let $S \in \R^{3 \times 3}$ be a skew-symmetric matrix and $\mathcal G \subset \LL$ be the location of the grain, which we assume to be simply connected and bounded.
We say that a pair $(u,\sigma)$, with $u:\mathcal L\to \mathbb R^3$ a displacement field and $\sigma: E_1\to {\mathcal L}$ a lattice-valued slip field (here $E_1$ is the set of nearest neighbor pairs of $\mathcal L$ and $\sigma(x,y)$ is assumed to be odd under orientation flip $(x,y)\to (y,x)$)
supports a `perfect grain' $\mathcal G$ with orientation $S$ if it is {\it gauge equivalent} to a configuration $(u',\sigma')$ such that
\begin{eqnarray} \label{gsupp}
&& u'(x)-u'(y) = \begin{cases} S (x-y) & \text{ if } \{x,y\} \subset \mathcal G,\\
0 & \text{ if } \{x,y\} \subset \mathcal G^c,
\end{cases}\\
&& \label{gsupp2}\sigma'(x,y)=0\quad \text{if}\quad x\sim y \quad \text{and}\quad \{x,y\} \subset \mathcal G\quad \text{or}\quad \{x,y\} \subset \mathcal G^c.\end{eqnarray}
We say that $(u',\sigma')$ is gauge equivalent to $(u,\sigma)$, if $(u',\sigma')=(u+v,\sigma+\dd v)$ for some lattice valued function $v$, see Sect.\ref{sym.sec}.
Note that \eqref{gsupp}-\eqref{gsupp2} do not impose any constraint on the nearest neighbour
bonds $(x,y)$ such that $x\in\mathcal G$ and $y\in\mathcal G^c$, or viceversa. For later reference, we denote this set of bonds by $E_1^{\mathrm{b}}(\mathcal G)$ (`b' for `boundary'):
$$E_1^{\mathrm{b}}(\mathcal G) = \{(x,x') \in E_1 \; : \; \{x,x'\} \cap \mathcal G \neq \emptyset \text{ and }
\{x,x'\} \cap \mathcal G^c \neq \emptyset\},$$
and we let $|E_1^{\mathrm{b}}(\mathcal G)|$ denote the number of elements of $E_1^{\mathrm{b}}(\mathcal G)$ modulo orientation.
It is not obvious from the outset whether a pair $(u,\sigma)$ supporting a perfect grain $\mathcal G$ with orientation $S$ can be chosen such that the associated energy is smaller than the volume of $\mathcal G$. For example, the pair $(u_S,0)$, with
\begin{equation} \label{defus.eq}
 u_S(x) = \begin{cases} Sx+\tau & \text{ if } x \in \mathcal G,\\
0 & \text{ else}\end{cases}
\end{equation}
clearly supports a perfect grain $\mathcal G$ with orientation $S$, for any fixed $\tau \in \R^3$. However, it can be
easily checked that $\min_\tau H_\mathrm{AO}(u_S, 0) \sim |E_1^{\mathrm{b}}(\mathcal G)| \,\mathrm{diam}(\mathcal G)^2 \gtrsim |\mathcal G|^\frac{4}{3}$ if $|\mathcal G|$ is large. The fact that the energy
$H_\mathrm{AO}(u_S, 0)$ is larger than $|\mathcal G|$ is a consequence of the discontinuity across the boundary of $\mathcal G$. However, one should not conclude from this that the
`optimal grain energy' scales more than extensively: on the contrary, it is remarkable that, for large grains, it scales proportionally to the size of the grain boundary $E_1^{\mathrm{b}}(\mathcal G)$,
as summarized in Theorem \ref{grain-scaling} below. Of course, before formulating our result, we first need to clarify what we mean by `optimal grain energy'.
Let us remark that there is no unique, well-established, notion of
grain energy at a microscopic level. In fact, such a notion is well-defined at the mesoscopic level, in which case it is related to the distribution of dislocations at the boundary of the grain, and is
known to scale proportionally to the grain boundary, see e.g. \cite{SB06}. Here we propose a microscopic definition thereof, which we expect to reduce to the usual mesoscopic definition in an appropriate
scaling limit. A rigorous connection of our microscopic definition with the continuum one is an interesting open problem, which goes beyond the purposes of this paper. 

We let the optimal grain energy be defined as
\begin{equation}\label{eq:GSen}\mathcal E_{\mathcal G}(S):=\liminf_{\Lambda\to\mathcal L}\lim_{\epsilon\to 0^+}\inf_{\sigma\in \mathcal M_S^{(\epsilon)}(\mathcal G)}\inf_u H_{\text{AO}}(u,\sigma),\end{equation}
where $\mathcal M_S^{(\epsilon)}(\mathcal G)$ is the set of $\epsilon$-minimizers of $H_{AO}(u_S,\sigma)$, with $u_S$ as in \eqref{defus.eq}, over the slip fields compatible with the grain $\mathcal G$:
more explicitly, $\mathcal M_S^{(\epsilon)}(\mathcal G)$ is the set of lattice-valued slip fields $\sigma$ such that there exists $\tau$ (see \eqref{defus.eq}) for which $\sigma,\tau$ realize the infimum of $\inf_{\tau}\inf_\sigma^*
H_{\text{AO}}(u_S,\sigma)$  within a precision $\epsilon$, where the $*$ on $\inf^*_\sigma$ indicates the constraint ${\rm supp}\,\sigma \subseteq E_1^b(\mathcal G)$. We are ready to state the basic bound
on $\mathcal E_{\mathcal G}(S)$, showing, as anticipated above, that the optimal grain energy scales like the grain boundary for large grains.

\begin{thm} \label{grain-scaling}
For any skew symmetric matrix $S \in \R^{3\times 3}$ and any bounded connected set $\mathcal G$,
\begin{align} \label{surf-scal}
\mathcal E_{\mathcal G}(S)
\leq 6\, |E_1^{\mathrm{b}}(\mathcal G)|.
\end{align}
\end{thm}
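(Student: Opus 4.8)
The plan is to prove \eqref{surf-scal} by exhibiting a single explicit competitor in the variational problem \eqref{eq:GSen}. Write $E_{\mathrm b}:=\inf_\tau\inf_\sigma^* H_{\mathrm{AO}}(u_S,\sigma)$ for the minimal boundary-slip energy entering the definition of $\mathcal M_S^{(\epsilon)}(\mathcal G)$. Since $u_S$ is an admissible displacement, the inner infimum over $u$ in \eqref{eq:GSen} satisfies $\inf_u H_{\mathrm{AO}}(u,\sigma)\le H_{\mathrm{AO}}(u_S,\sigma)$, while any $\sigma\in\mathcal M_S^{(\epsilon)}(\mathcal G)$ obeys $H_{\mathrm{AO}}(u_S,\sigma)\le E_{\mathrm b}+\epsilon$ by definition of an $\epsilon$-minimizer. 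Hence $\inf_{\sigma\in\mathcal M_S^{(\epsilon)}}\inf_u H_{\mathrm{AO}}(u,\sigma)\le E_{\mathrm b}+\epsilon$, and letting $\epsilon\to0^+$ and then taking $\liminf_{\Lambda\to\mathcal L}$ gives $\mathcal E_{\mathcal G}(S)\le E_{\mathrm b}$. The limits are harmless: $\mathcal G$ is bounded, so for $N$ large it lies well inside $\Lambda$, $u_S$ vanishes outside $\mathcal G$ and thus meets the Dirichlet condition, and the bonds that carry energy never touch the box boundary. It therefore suffices to construct one boundary-supported $\sigma$ and one $\tau$ with $H_{\mathrm{AO}}(u_S,\sigma)\le 6\,|E_1^{\mathrm b}(\mathcal G)|$.

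First I would localize the energy to the grain boundary by setting $\sigma\equiv0$ on every bond not in $E_1^{\mathrm b}(\mathcal G)$. For a bond with both endpoints in $\mathcal G$ one has $u_S(y)-u_S(x)=S(y-x)$, so the bracket in \eqref{AOnoec} equals $\bigl(S(y-x)\bigr)\cdot(y-x)=0$ since $S$ is skew symmetric; for a bond with both endpoints in $\mathcal G^c$ one has $u_S(x)=u_S(y)=0$, and the bracket again vanishes. Orienting each boundary bond so that $x\in\mathcal G$, $y\in\mathcal G^c$, this leaves $H_{\mathrm{AO}}(u_S,\sigma)=\tfrac12\sum_{(x,y)\in E_1^{\mathrm b}(\mathcal G)}\bigl[(-Sx-\tau-\sigma(x,y))\cdot(y-x)\bigr]^2$.

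The heart of the argument is the choice of $\sigma$ on the boundary bonds. Only the projection $\sigma(x,y)\cdot(y-x)$ enters the energy, and $\sigma(x,y)$ may be chosen independently on each oriented boundary bond, subject only to lying in $\LL$. A direct computation from \eqref{b123} shows that for every nearest-neighbour direction $e\in\{\pm b_1,\dots,\pm b_6\}$ the set of attainable projections is $\{v\cdot e:\,v\in\LL\}=\tfrac12\Z$; for example $v\cdot b_1=n_1+\tfrac12(n_2+n_3)$ for $v=\sum_i n_i b_i$, which ranges over all of $\tfrac12\Z$. I would therefore pick, on each boundary bond, a vector $\sigma(x,y)\in\LL$ whose projection onto $y-x$ is the nearest half-integer to $(-Sx-\tau)\cdot(y-x)$, so that the residual is at most $1/4$ in absolute value. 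Each boundary bond then costs at most $\tfrac12(1/4)^2=1/32$, and summation yields $H_{\mathrm{AO}}(u_S,\sigma)\le\tfrac1{32}|E_1^{\mathrm b}(\mathcal G)|$, well below the claimed $6\,|E_1^{\mathrm b}(\mathcal G)|$; the constant $6$ is thus a deliberately non-optimized bound. Equivalently one may take $\sigma(x,y)$ to be a lattice point nearest to $-Sx-\tau$ and estimate the residual by the covering radius of the FCC lattice, which also gives an $O(1)$ cost per bond.

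The one genuinely substantive point, and the reason \eqref{surf-scal} is not obvious, is exactly this cancellation: the naive competitor $(u_S,0)$ carries the term $(-Sx-\tau)\cdot(y-x)$ on each boundary bond, whose size grows linearly in $|x|$ and produces the energy $\sim|E_1^{\mathrm b}(\mathcal G)|\,\mathrm{diam}(\mathcal G)^2$ recorded before \eqref{eq:GSen}. The lattice-valued slip absorbs this growing contribution down to a bounded residual on every bond, and what makes this work uniformly as $\mathcal G$ grows is that every point of $\R^3$ lies within a fixed distance of $\LL$, so the approximation does not degrade with the grain size. I expect the only steps needing care to be the finite verification of the projection identity $\{v\cdot e\}=\tfrac12\Z$ from \eqref{b123} and the routine unwinding of the nested $\liminf$/$\lim$/$\inf$ in \eqref{eq:GSen}.
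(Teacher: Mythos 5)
Your proposal is correct, and in fact it proves the stronger bound $\mathcal E_{\mathcal G}(S)\le \tfrac1{32}\,|E_1^{\mathrm b}(\mathcal G)|$. The overall skeleton agrees with the paper's proof: both arguments reduce \eqref{eq:GSen} to exhibiting one competitor $(u_S,\sigma)$ with $\sigma$ supported on $E_1^{\mathrm b}(\mathcal G)$ (your unwinding of the nested $\liminf$/$\lim$/$\inf$ is spelled out more carefully than in the paper, where this reduction is left implicit), and both kill interior bonds by skew symmetry of $S$ and exterior bonds by $u_S=0$. Where you genuinely diverge is the construction on the boundary bonds. The paper decomposes $S$ into the twelve standard FCC slip systems, $S=\sum_{(l,n)}\xi_{(l,n)}\,b_l\otimes m_n$ as in \eqref{slip_decomp}, sets $\sigma_S(x,y)=-\sum_{(l,n)}b_l\lfloor \xi_{(l,n)}\,x\cdot m_n\rfloor$, and bounds each of the twelve fractional parts by $1$; this is exactly where the constant $6=\tfrac12\cdot 12$ in \eqref{surf-scal} comes from. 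You instead use that only the scalar projection $\sigma(x,y)\cdot(y-x)$ enters the energy, verify that the attainable projections $\{v\cdot b_l:\ v\in\LL\}$ equal $\tfrac12\Z$ for every bond direction (your computation $v\cdot b_1=n_1+\tfrac12(n_2+n_3)$ is right, and $v\cdot b_4=\tfrac12(n_3-n_2)$, $v\cdot b_5=\tfrac12(n_1-n_3)$, $v\cdot b_6=\tfrac12(n_2-n_1)$ handle the remaining directions), and round to the nearest half-integer, paying at most $\tfrac12(1/4)^2=1/32$ per unordered boundary bond. Your route is more elementary and quantitatively sharper. What the paper's slip-system construction buys instead is structure: because $\sigma_S$ is obtained by flooring the amplitudes of crystallographic slip systems, it is coherent across bonds, is gauge-equivalent to a configuration with bounded displacement and interior slips (used immediately after the proof to visualize the grain, Fig.~\ref{grains}), and produces a dislocation density $\dd\sigma_S$ with the wall structure that connects to the Read--Shockley analysis of Sect.~\ref{subsec:RS}; your bond-by-bond rounding carries no such interpretation, but for the inequality \eqref{surf-scal} that is irrelevant.
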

It is very likely that our upper bound can be improved, i.e., it is not sharp.
In fact, the construction of matching upper and lower bounds in the limit of a large grain constitutes an interesting mathematical problem.

\begin{proof}
In order to prove the theorem, we will construct a lattice valued slip field $\sigma_S$ supported on $E_1^{\mathrm{b}}(\mathcal G)$ such that $H_{\mathrm{AO}}(u_S,\sigma_S)\le 6|E_1^{\mathrm{b}}(\mathcal G)|$, where $u_S$ is defined as in \eqref{defus.eq}, with $\tau=0$. We first determine slip amplitudes $\xi_{(l,n)} \in \R$ such that the matrix $S$ can be decomposed into simple slip systems, i.e.
\begin{equation} \label{slip_decomp} S = \sum_{(l,n)} \xi_{(l,n)}\, b_l \otimes m_{n}
\end{equation}
with the convention that $b_l \in \LL$ are the slip vectors and $m_n \in \mathcal L^*$ are the slip plane normals. The standard 12 slip systems of the FCC lattice are
$$(l,n) \in \{(1,2),(1,3),(2,1),(2,3),(3,1),(3,2),(4,1),(4,4),(5,2),(5,4),(6,3),(6,4)\},$$
where: $b_1,\ldots,b_6$ are the nearest neighbor vectors of the FCC lattice, introduced at the beginning of Sect.\ref{secAO},
$m_1,m_2,m_3$ are the basis vectors of $\mathcal L^*$, see \eqref{eq:mi}, and $m_4=m_1+m_2+m_3$.
A simple calculation delivers the following solution of~\eqref{slip_decomp}
\begin{eqnarray}\label{rotdec}
S &=&
\frac{1}{4\pi}\Bigl(S_{12}\,[b_3\otimes (m_1 -m_2) + b_6\otimes (m_3+m_4)]
+S_{13}\,[(b_2\otimes (m_1-m_3)\nonumber\\
&&-b_5\otimes (m_2+m_4)]+S_{23}\,[b_1\otimes(m_2-m_3) +b_4\otimes (m_1+ m_4)]\Bigr).\nonumber
\end{eqnarray}

Once the slip amplitudes $\xi_{(l,n)}$ are fixed, we let
$$\sigma_S(x,y)=-\sigma_S(y,x):=-\sum_{(l,n)}b_l\,\lfloor \xi_{(l,n)}\,x\cdot m_n\rfloor, \quad \text{if}\quad x\sim y,\quad \text{with}\quad x\in\mathcal G\ \text{and}\ y\not\in\mathcal G,$$
and $\sigma_S(x,y)=0$ otherwise.

Let us now compute $H_{\mathrm{AO}}(u_S,\sigma_S)$. We partition the set $E_1$ into three groups:
$$
E_1 = E_1^\mathrm{i}(\mathcal G) \cup E_1^\mathrm{o}(\mathcal G) \cup E_1^\mathrm{b}(\mathcal G),$$
where $E_1^\mathrm{i}(\mathcal G)$ is the set of bonds inside $\mathcal G$, while $E_1^\mathrm{o}(\mathcal G)$ is the set of bonds outside $\mathcal G$.
The partition of $E_1$ induces a decomposition of the energy:
$$ H_\mathrm{AO}(u_S,\sigma_S) = H^\mathrm{inside}_\mathrm{AO}(u_S,0)+H^\mathrm{outside}_\mathrm{AO}(u_S,0)+H^\mathrm{boundary}_\mathrm{AO}(u_S,\sigma_S),$$
where we used the fact that $\sigma_S$ is zero on $E_1^\mathrm{i} \cup E_1^\mathrm{o}$. Now, recalling that $\dd u_S(x,y)=S(x-y)$ for $(x,y)\in E_1^{\mathrm{i}}$, we find
$H^\mathrm{inside}_\mathrm{AO}(u_S,0)=0$, by the invariance under linearized rotations. Moreover, $H^\mathrm{outside}_\mathrm{AO}(u_S,0)=0$, simply because $u_S(x)=0$ for $x\in\mathcal G^c$.
Finally, by the very definition of $u_S$ and $\sigma_S$,
\begin{equation} \label{nobulk}
H_\mathrm{AO}^\mathrm{boundary}(u_S,\sigma_S)=\frac12\sum_{\substack{(x,y)\in E_1^b(\mathcal G): \\ x\in\mathcal G,\, y\in\mathcal G^c}}\sum_{(l,n)}\big[(x-y)\cdot b_l\, \big(\xi_{(l,n)}\,x\cdot m_n-\lfloor
\xi_{(l,n)}\,x\cdot m_n\rfloor\big)\big]^2.
\end{equation}
Now, the difference in parentheses in the right side is between $0$ and $1$. Therefore, recalling that $|x-y|=|b_l|=1$ and that the sum over $(l,n)$ runs over 12 different terms, we find
$$H_\mathrm{AO}^\mathrm{boundary} \leq  6 |E_1^b(\mathcal G)|,$$
as desired. \end{proof}
In order to visualize the `optimal' location of the atoms within a grain, we remark that the pair $(u_S,\sigma_S)$ used in the proof of Theorem \ref{grain-scaling} is gauge equivalent to a configuration $(u,\sigma)$ such that: (1)
$|u(x)|\le 6$ for $x\in \mathcal G$, and $u(x)=0$ otherwise, (2) the support of $\sigma$ is contained in $E_1^\mathrm{i}(\mathcal G)$. In order to exhibit such an equivalent pair, we let $u=u_S+v_S$ and $\sigma=\sigma_S+\dd v_S$, with
$(u_S,\sigma_S)$ the same as those used in the proof of the theorem, and
$$v_S(x)=\begin{cases} -\sum_{(l,n)}b_l\lfloor \xi_{(l,n)}x\cdot m_n\rfloor, \quad \text{if} \quad x\in\mathcal G,\\
0, \quad \text{otherwise}.\end{cases}$$
We visualise in left panel of Fig.~\ref{grains} such a displacement field $u$ in a two-dimensional setting where
$S = \frac{1}{5}\left({0 \atop 1}{-1 \atop 0}\right)$ and Neumann boundary conditions are used (see Sect.\ref{subdec} for a definition of Neumann boundary conditions). The colored triangles are the support of $\dd \sigma$.
The minimizer of $H_\mathrm{AO}(\cdot,\sigma)$ is shown in the right panel of Fig.~\ref{grains}. The corresponding minimal energy, $\inf_u H_\mathrm{AO}(u,\sigma)$,
is the one that the system will reach after relaxation at fixed slip field $\sigma$. In the limit of large grain, it is supposed to provide a good approximation for the optimal grain energy $E_{\mathcal G}(S)$ in
\eqref{eq:GSen}. As it will be proved in the following sections, remarkably, the minimal energy $\inf_u H_\mathrm{AO}(u,\sigma)$ only depends on the `charge distribution' $q=\dd \sigma$, which, therefore, characterizes the grain from an energetic point of view.

\begin{figure}
\centering
\begin{subfigure}{.5\textwidth}
\centering
\includegraphics[width=.9\linewidth]{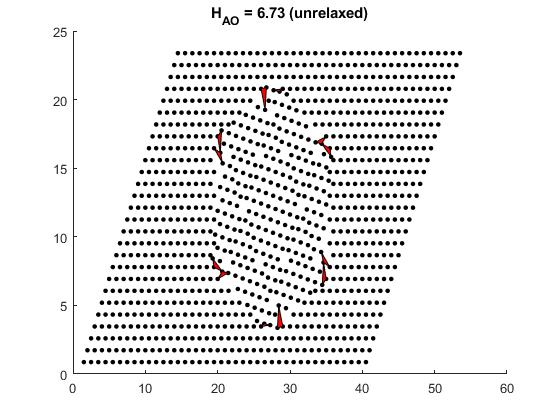}
\end{subfigure}%
\begin{subfigure}{.5\textwidth}
\centering
\includegraphics[width=.9\linewidth]{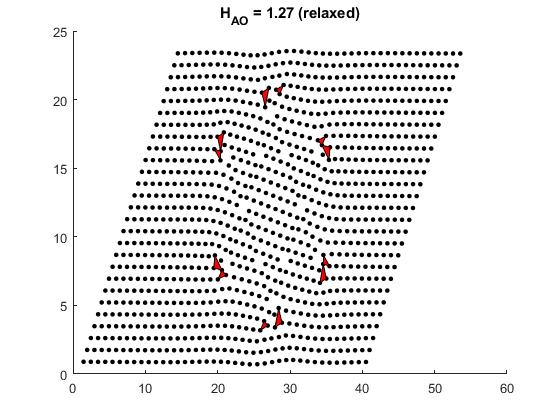}
\end{subfigure}
\caption{The left panel shows the displacement $u=u_S+v_S$ for the 2-dimensional Ariza-Ortiz model on the triangular lattice with Neumann boundary conditions and
$S =\frac15 \left( {0 \atop 1} {-1 \atop  0}\right)$.
Colored triangles indicate the support of $\dd \sigma$, with $\sigma=\sigma_S+\dd v_S$.
The right panel shows the relaxed displacement field $u_\sigma$ which minimizes $H_\mathrm{AO}(\cdot, \sigma)$.}
\label{grains}
\end{figure}

\subsection{Read-Shockley law}\label{subsec:RS}
Theorem \ref{grain-scaling} shows that the optimal energy of a perfect grain scales like its boundary, but does not provide an explicit formula for the surface tension, that is, the proportionality constant in front of $|E_1^{\mathrm{b}}(\mathcal G)|$,
in the limit of a large grain. Physically, there are explicit expectations for the surface tension, specifically in the limit of small rotation angles:
according to the Read-Shockley formula \cite{RS50}, given a large grain, rotated by a small angle $\theta$ with respect to a reference crystalline background, its total energy is proportional to its boundary,
with a proportionality constant $\gamma(\theta)$ of the form \eqref{readsh}.
An upper bound which is consistent with the logarithmic scaling can be found in \cite{LL17}.

In this section, we state two results about the exact, asymptotic, computation of the energy of a dislocation dipole and of two walls of dislocations with opposite charges, far away from each other.
In particular, the energy of the two parallel walls of dislocations with opposite charges is expected to correspond to the optimal energy of a grain supported in the region between the two walls (the electrostatic analogue to keep in mind is a capacitor:
dislocations correspond to the charges on the plates of the capacitor, and the intermediate region between the plates is where the elastic energy concentrates), in the sense of definition \eqref{eq:GSen}.
The reader can convince herself/himself that the smaller the density of dislocations on the walls, the smaller the rotation angle of the grain, and that in the limit of small density of dislocations,
the rotation angle goes to zero linearly with the density. Therefore, the computation of the energy of the `dislocation capacitor' performed below provides information on the optimal energy of the corresponding grain.
Our main result is that {\it we recover the Read-Shockley law} for a grain with such a simple, specific, geometrical shape.

The computations are reported in Sect.\ref{sec.RS}. For simplicity, we perform the computations in two dimensions, but similar results, including the logarithmic dependence of the surface tension on
the rotation angle, in the sense of \eqref{readsh}, can be extended to
three dimensions, by assuming that the distribution of dislocations under consideration is translationally invariant in the third coordinate direction; however, in three dimensions the computations become cumbersome and
their key features would be hidden behind unimportant technical complications: therefore, we prefer to restrict to 2D and leave the tedious but straightforward extension to higher dimensions to the interested reader.

We denote by $\mathcal T$
the triangular lattice and, with some abuse of notation, we let its basis vectors be $b_1=\left({1\atop 0}\right)$, $b_2=\frac{1}{2}\left({-1\atop \sqrt3}\right)$. For later reference we also define $b_3=-b_1-b_2=\frac{1}{2}\left({-1\atop -\sqrt3}\right)$.
Given a finite box $\Lambda\subset \mathcal T$ of side $N$ (the 2D analogue of \eqref{LN}), we let the 2D Ariza-Ortiz energy in $\Lambda$ with Dirichlet boundary conditions be defined by the same formula \eqref{AOnoec}; with
some abuse of notation, we denote the 2D energy by the same symbol $H_{\mathrm{AO}}(u,\sigma)$.

\begin{figure}
\begin{center}\includegraphics[width=.7\textwidth,page=4]{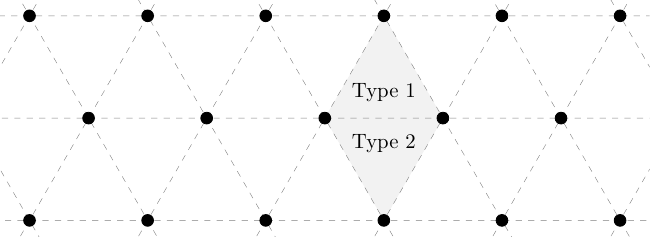}\end{center}
\caption{Graphical illustration of the charge distribution $q_\mathrm{dip}^n$, for $n=6$. The shaded triangles, corresponding to faces $f_0$ and $f_n$, indicate the support of $q_\mathrm{dip}^n$.
We also show in red the support of a slip field $\sigma^n_\mathrm{dip}$ such that $\dd \sigma^n_{\mathrm{dip}}=q^n_\mathrm{dip}$, see \eqref{defsigdip.eq}.
The sites labelled $j=1,\ldots,n$ on the bottom (resp. top) row have coordinates $jb_1$ (resp. $jb_1-b_3$).}\label{fig.app}
\end{figure}

We consider a {\it dislocation dipole} formed by a pair of opposite charges $\pm b_1$, separated by a distance $n$ in direction $b_1$, whose `charge distribution' is:
\begin{eqnarray}  \label{disl_dist}
q_\mathrm{dip}^n &=& \left({\bf 1}_{f_0} - {\bf 1}_{f_n}\right) b_1
\end{eqnarray}
with $f_n=(0,b_1,-b_3)+nb_1$, see Fig.~\ref{fig.app}.
We also consider two parallel arrays of dislocations, formed by $M$ dislocation dipoles as in \eqref{disl_dist}, arranged one at a distance $m \sqrt{3}$ from the other in the direction orthogonal
to $b_1$, whose charge distribution is:
\begin{equation} \label{wall_dist}
q_\mathrm{grain}^{M,n,m}(f) = \sum_{j=1}^M q^n_\mathrm{dip}(f-jm(b_2-b_3)).
\end{equation}
In the limit $M\to\infty$, the charge distribution $q_\mathrm{grain}^{M,n,m}$ tends to that of two infinite walls of dislocations, separated by a distance $n$, with charge density $\sim 1/m$. As discussed
in Appendix \ref{app.grain}, its energy is expected to coincide at dominant order with the optimal energy of a grain supported in the region between the walls, rotated by an angle $\theta\sim 1/m$, in the limit $m\to\infty$.

\begin{thm} \label{thm.RS}
Let
\begin{eqnarray}  E_\mathrm{dip}(n)&=&\lim_{\Lambda\to\mathcal T}\min \left\{H_{AO}(u,\sigma)\; : \; \dd \sigma = q^{n}_\mathrm{dip}\right\},\\
E_\mathrm{grain}(n,m)&=&\lim_{M\to\infty} \frac{1}{\sqrt3 mM} \Big[\lim_{\Lambda\to\mathcal T} \min \left\{H_\mathrm{AO}(u,\sigma) \; : \; \dd \sigma = q_\mathrm{grain}^{M,n,m}\right\}\Big]
\label{defEgrain}\end{eqnarray}
be the energy of a dipole and the energy density of a grain boundary per unit length, in the thermodynamic limit.
Then
\begin{equation}\label{dip.0}E_\mathrm{dip}(n)=\frac{\log n}{2\pi\sqrt3}+O(1), \quad n\gg 1,\end{equation}
and
\begin{equation}\lim_{n\to\infty}E_\mathrm{grain}(n,m)=\frac{\log m}{6\pi m} + O(1/m), \quad m\gg 1.\label{RS.0}\end{equation}
\end{thm}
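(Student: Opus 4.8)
The plan is to eliminate the displacement field, reducing both energies to the lattice Green function of the Ariza-Ortiz operator, and then to exploit the fact that at long wavelengths this Green function is the one of two-dimensional electrostatics. The dipole energy becomes a two-point function of $\mathcal G$, while the grain-boundary energy is governed by the self-energy of a periodic array of dislocations, whose logarithmic divergence is cut off at the inter-dislocation spacing by screening.

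\emph{Reduction to a Green function.} First I would integrate out $u$. For fixed $\sigma$ the map $u\mapsto H_\mathrm{AO}(u,\sigma)$ is a non-negative quadratic form, and its minimiser solves the discrete force-balance equation $\sum_{y\sim x}(y-x)\big[(\dd u-\sigma)(x,y)\cdot(y-x)\big]=0$. As recalled in Sect.~\ref{sec.DEC}, $\min_u H_\mathrm{AO}(u,\sigma)$ depends on $\sigma$ only through $q=\dd\sigma$ and can be written as $\tfrac12\langle q,\mathcal G q\rangle=\tfrac12\sum_{f,f'}q(f)\cdot\mathcal G(f,f')\,q(f')$, where the matrix-valued kernel $\mathcal G$ is obtained by inverting the Ariza-Ortiz operator on the orthogonal complement of the exact forms. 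I would diagonalise $\mathcal G$ by the Fourier transform on $\mathcal T$, expressing its symbol in terms of the (pseudo-)inverse of the elastic matrix $\hat A(k)=\sum_{l}(1-\cos(k\cdot b_l))\,b_l\otimes b_l$. The decisive input is the small-$k$ expansion $\hat{\mathcal G}(k)=|k|^{-2}\,\Pi(k/|k|)+O(1)$, which identifies the long-wavelength limit with the continuum elasticity kernel; this is precisely the step that must be performed compatibly with linearized rotational invariance, since the latter dictates the transverse tensor structure of $\Pi$.

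\emph{Energy of a dipole.} Inserting $q^n_\mathrm{dip}=(\mathbf 1_{f_0}-\mathbf 1_{f_n})b_1$ into $\tfrac12\langle q,\mathcal G q\rangle$ gives $E_\mathrm{dip}(n)=b_1\cdot\big(\mathcal G(f_0,f_0)-\mathcal G(f_0,f_n)\big)b_1$, using $\mathcal G(f_0,f_0)=\mathcal G(f_n,f_n)$ and symmetry. The diagonal self-term is an $n$-independent lattice constant, hence contributes $O(1)$, whereas the off-diagonal term inherits the logarithmic decay of $\mathcal G$ from the previous step, and one finds $b_1\cdot\mathcal G(f_0,f_n)\,b_1=-\tfrac{1}{2\pi\sqrt3}\log n+O(1)$; combining the two pieces yields \eqref{dip.0}. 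The constant $1/(2\pi\sqrt3)$ arises from the $1/(2\pi)$ of the two-dimensional Coulomb kernel, the geometry of the fundamental cell of $\mathcal T$, and the projection $b_1\cdot\Pi\,b_1$ of the tensor structure onto the Burgers direction.

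\emph{Energy of a grain boundary, and the main obstacle.} Sending $M\to\infty$ in \eqref{wall_dist} makes the charge periodic, with period $m(b_2-b_3)=(0,m\sqrt3)$ transverse to $b_1$ and one dipole per cell. I would then Bloch-transform in the periodic direction, so that the energy per dipole becomes a sum over transverse momenta $k_\perp\in\tfrac{2\pi}{m\sqrt3}\Z$ and an integral over the longitudinal momentum, weighted by the dipole form factor $2(1-\cos(k_\parallel n))$ and by $\hat{\mathcal G}$. Here the tensor structure of $\Pi$ is essential: it suppresses the otherwise infrared-divergent zero transverse mode $k_\perp=0$ (again a consequence of linearized rotational invariance), leaving only the modes $k_\perp\ne0$. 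Each such mode decays, after the longitudinal integral, on the scale $1/|k_\perp|\lesssim m\sqrt3$, so that as $n\to\infty$ the two walls decouple and the factor $2(1-\cos(k_\parallel n))$ may be replaced by $2$; the remaining sum over $k_\perp\ne0$ is a harmonic series cut off at the lattice scale, producing $\log m$, and the energy per dipole saturates at $\tfrac{1}{2\pi\sqrt3}\log m+O(1)$. Dividing by the length $\sqrt3\,m$ carried by each dipole gives \eqref{RS.0}. I expect the main difficulty to lie in this last paragraph together with the reduction step: computing $\hat{\mathcal G}$ and its small-$k$ tensor structure explicitly with the correct constants, justifying the interchange of the limits $\Lambda\to\mathcal T$, $M\to\infty$ and $n\to\infty$, and controlling the screening that turns the naive capacitor energy (which would grow linearly in $n$) into a saturating one, all of which rely on the rotation-compatible tensorial form of $\mathcal G$ rather than on a scalar Coulomb kernel.
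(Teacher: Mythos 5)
Your proposal follows the same strategy as the paper's proof: both reduce the minimal energy to the quadratic form $\tfrac12\langle q,G^*BGq\rangle$ in the dislocation charge (eq.~\eqref{beh}), pass to Fourier/Bloch variables with discrete transverse momenta $p_j=\frac{2\pi j}{m\sqrt3}$ after sending $M\to\infty$, and obtain the Read--Shockley law from the fact that the rotation-compatible tensorial structure of the kernel kills the would-be capacitor contribution, leaving a harmonic-type sum over the nonzero transverse modes that produces $\log m$; your constants ($\tfrac1{2\pi\sqrt3}$ per dipole, $\tfrac1{6\pi m}\log m$ per unit length) agree with \eqref{dip.0} and \eqref{RS.0}. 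The execution differs in one substantive way: the paper never computes the two-form kernel $\hat{\mathcal G}(k)$ or its pseudo-inverse structure. Instead it fixes an explicit slip-field representative $\sigma^n_{\mathrm{dip}}$ (eq.~\eqref{defsigdip.eq}) and evaluates $E=\tfrac12\langle\sigma,B\sigma\rangle-\tfrac12\langle \dd^*B\sigma,A^{-1}\dd^*B\sigma\rangle$ (eq.~\eqref{energy}), so the only Fourier object needed is the $2\times2$ matrix $\hat A^{-1}(k)$; the logarithm then emerges from an exact cancellation of two terms each linear in $n$ (the bare slip energy against the relaxation energy), and the angular structure you attribute to $\Pi$ is encoded in the explicit expansion $F(k)=2-\tfrac{16}3 k_1^2k_2^2/|k|^4+O(k^2)$ (eq.~\eqref{Fexp}), whose vanishing at $k_2=0$ is the precise form of your ``suppression of the $k_\perp=0$ mode''. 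Your route of computing $\hat{\mathcal G}(k)=|k|^{-2}\Pi(\hat k)+O(1)$ directly is viable, but note that all the quantitative content (both constants and the suppression mechanism itself) lives in the angular dependence of $\Pi$, which you assert rather than derive; this is exactly the computation that the paper's slip-field representation makes tractable. One technical caution: in two dimensions the split $E_{\mathrm{dip}}(n)=b_1\cdot\big(\mathcal G(f_0,f_0)-\mathcal G(f_0,f_n)\big)b_1$ with ``the diagonal self-term an $n$-independent lattice constant'' is not literally meaningful, since each term separately is infrared divergent in the thermodynamic limit (a symbol $\sim|k|^{-2}$ is not integrable at $k=0$ in 2D); only the difference, i.e.\ the $k$-integral weighted by the form factor $1-\cos(nk_1)$, is finite, so the form factor must be kept inside the integral throughout --- as your grain-boundary computation, and the paper's treatment of both quantities, in fact do.
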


The proof of Theorem \ref{thm.RS} is given in Sect.\ref{sec.RS}.
Eq.\eqref{RS.0} is the desired Read-Shockley law for the energy of a grain boundary.
Its remarkable feature is that it is asymptotically independent of the separation among the two arrays of charges it consists of. This is in sharp contrast with the
`capacitor law', i.e., with the formula for the energy of two parallel arrays of `scalar' dipoles, i.e., of a similar arrangement of charges in the usual Coulomb lattice gas, which scales linearly
in $n$ at large separation $n$. For a technical comparison of the computations leading to the Read-Shockley and the capacitor laws, see Sect.\ref{sec.compar} below.

Note that the $E_{\mathrm{grain}}(n,m)$ does not include a contribution from the dislocation cores.
Of course, the inclusion of such a contribution, of the form $W(q)$, see \eqref{Wq}, can be done without any additional difficulty. Note that
the extra energy from the dislocation cores would contribute $O(1/m)$ to the right side of \eqref{RS.0} and, therefore, would not modify the dominant asymptotics of the Read-Shockley law.

\section{Exterior calculus}\label{sec.DEC}
In this section, we review a few basic aspects of discrete exterior calculus, which is a fundamental tool used in the proof of the main results. In particular, an application of the Hodge decomposition to the Ariza-Ortiz model will allow
us to decompose its energy in the sum of a `spin wave' part plus a `dislocation' part: such a decomposition is central to our analysis and will be used systematically in the following.

\subsection{Cellular complex, discrete $p$-forms and discrete differential}\label{subdec}

The domain of the three-dimensional Ariza-Ortiz model is given by cells consisting of
\begin{itemize}
\item vertices $E_0$,
\item oriented edges $E_1$ (ordered vertex pairs),
\item oriented faces $E_2$ (polygons whose sides are consistently oriented edges)
\item oriented volumes $E_3$ (polyhedra whose faces are consistently oriented faces),
\end{itemize}
which form a {\it cellular complex}, cf \cite{Hat02}. The orientation of a face $f\in E_2$ is defined by the direction of a reference vector, orthogonal to $f$; the sides of $f$ are said to be consistently oriented if their orientation satisfies the `right-hand rule'.
The orientation of a volume $v\in E_3$ is either `outward' or `inward': its faces are said to be consistently oriented if the directions of their reference vectors all point, correspondingly, in the outward or inward
direction. The case that is of interest to us is where the vertices coincide with a Bravais lattice, a case that is commonly referred to as {\it lattice cellular complex}.
We are specifically interested in the case that $E_0=\mathcal L$, with $\LL$ the face centered cubic lattice, in which case we let, in particular, $E_1$ be the set of all ordered pairs of `nearest neighbor' sites (those at smallest Euclidean distance), and $E_2$ the set of oriented triangular faces associated with the $3$-cycles of nearest neighbor sites. A detailed description of the corresponding cellular complex is given in Appendix~\ref{app.FCC}.

The {\it boundary operator} $\partial_p:E_p\to\ E_{p-1}$ with $p>0$ returns the set of boundary cells with the appropriate orientation.
By repeated applications of the boundary operator, any $p$-cell $c$ with $p>0$ is mapped to a set of vertices in $E_0$, which we refer to as the `set of vertices of $c$' and denote by $V(c)$.
We only require a small subset of
cohomology theory and will use a minimalistic setup. In particular, the action of $\partial_p$ is defined via explicit formulae in Appendix~\ref{app.FCC}, the reader is encouraged to confirm that it coincides with the standard definition \cite[Sec 3]{Hat02}.

\medskip

The vector space $\form^p$ is the set of $p$-forms, namely the set of functions $u:E_p\to \mathbb R^3$ that are odd under orientation flip. The {\it lattice-valued}
$p$-forms, that is, those that return values in $\mathcal L$, will be denoted by $\form^p_{\mathcal L}$.
We define for $p=0,1,2$ the {\it exterior derivative
operators} $\dd_p:\form^p \to \form^{p+1}$. If $p=1,2$, they are given
by the formula
\begin{equation}\label{dp} \dd_p u(c) = \sum_{c' \in \partial_{p+1} c}u(c'),\qquad c\in E_{p+1},\end{equation}
if $p=0$ and $e=(x,y)\in E_0$ is an oriented edge then
\begin{equation} \label{dp1} \dd_0 u(e)=u(y)-u(x).\end{equation}
A straightforward calculation shows that $\dd_{p+1} \dd_p =0$, for $p=0,1$, see, e.g., \cite[Lemma 2.1]{Hat02}. In some cases, it is useful to interpret $\dd_{p+1} \dd_p$ as being $=0$ also for $p=2$, in
which case we let $\dd_3:=0$. Whenever the notation is un-ambiguous, we will drop the label $p$ from $\dd_p$ (i.e., if it is clear from the context that $u$ is a $p$-form, then we will write
$\dd u$ instead of $\dd_p u$).

\medskip
We are interested in the cellular complexes and the corresponding set of $p$-forms,
obtained by taking {\it finite} portions $\Lambda$ of $\mathcal L$, with prescribed boundary conditions, namely Dirichlet, Neumann, or periodic.
For simplicity, we restrict to cases in which such finite portions are parallelepipeds of size $N$, like in \eqref{LN}.

In the case of Neumann boundary conditions, we let $\Lambda_p$ be the subset of $E_p$ consisting of the
$p$-cells $c$, whose set of vertices are contained in $\Lambda$; the $p$-forms of interest are those that depend only on the $p$-cells in $\Lambda_p$.
In the case of Dirichlet boundary conditions or {periodic} boundary conditions we maintain the same cellular complex as for $\LL$.
For Dirichlet boundary conditions
the relevant $p$-forms are those that assume non-zero values on cells whose vertices have non-empty intersection with $\Lambda$.
For periodic boundary conditions the $p$-forms of interest are $N$-periodic in the directions $b_1,b_2,b_3$.
In all these cases, with some abuse of notation, we denote the cellular complex by $(\Lambda_0,\Lambda_1,\Lambda_2,\Lambda_3)$
and by $\form^p$ the corresponding sets of $p$-forms.

\medskip

For any given finite $\Lambda$ as in \eqref{LN} and all the three boundary conditions introduced above, the vector spaces $\form^{p}$ are finite dimensional Hilbert spaces with canonical inner product\footnote{{We use the convention that, for $u,v\in\form^{0}$, $\langle u,v\rangle=\sum_{x\in\Lambda_0} u(x)\cdot v(x)$, while, for $u,v\in\form^{p}$ with $p=1,2,3$, 
$\langle u,v\rangle=\frac12\sum_{c\in\Lambda_p} u(c)\cdot v(c)$, where we recall that, for $p>0$, $\Lambda_p$ is the set of {\it oriented} $p$-cells: therefore, the factor $1/2$ in front of the sum is chosen so that every unoriented $p$-cell is effectively counted just once.}} $\langle\,\cdot\,,\,\cdot\,\rangle$.
Thanks to the relation $\dd_{p} \dd_{p-1}=0$, for $p=1,2,3$, one has that $\range \dd_{p-1} \subset \nullo \dd_p$ and we can define the cohomology groups
$$ H^p = \nullo \dd_p / \range \dd_{p-1},\qquad p=0,1,2,3,$$
with the conventions that: $/$ denotes the standard quotient operator, $\nullo \dd_3=\form^3$, and $H^0=\nullo \dd_0$.
As usual, we say that:
\begin{itemize}
\item if $u \in \form^p$ has the property that $\dd u =0$ (i.e., if $u\in\nullo \dd_p$), then $u$ is closed;
\item if $u \in \form^p$ has the property that $u = \dd v$ for some $v \in \form^{p-1}$ (i.e., if $u\in\range\dd_{p-1}$), then $u$ is exact.
\end{itemize}
In terms of these definitions, $H^p$ is the subspace of closed $p$-forms modulo the exact $p$-forms (i.e., modulo the following equivalence relation for closed $p$-forms: $u_1\sim u_2$ $\Leftrightarrow$ $u_2-u_1=\dd v$, for some $v\in \form^{p-1}$). The space $H^p$ characterizes the obstructions to the
solvability of the equation $\dd u = v$ if $v \in \form^{p}$ is closed. If $H^p=\{0\}$, then
any closed $v$ is automatically exact. More generally, $v \in \form^p$ is exact if and only if it is closed and additionally satisfies $\dim H^p$ linear constraints.

The cohomology groups associated with the box $\Lambda$ with Dirichlet, Neumann and periodic boundary conditions are known, and are the following.
\begin{description}
\item[Dirichlet boundary conditions]
\begin{equation}\label{coh.Dir} H^p = \begin{cases} \R & \text{ if } p = 3,\\
\{0\} & \text{ else.}
\end{cases}\end{equation}
\item[Neumann boundary conditions]
\begin{equation}\label{coh.Neu}  H^p = \begin{cases} \R  & \text{ if } p = 0,\\
\{0\} & \text{ else.}
\end{cases}\end{equation}
\item[Periodic boundary conditions]
\begin{equation}\label{coh.per}  H^p= \begin{cases} \R & \text{ if $p=0$ or $p=3$},\\
\R^3 & \text{ if $p=1$ or $p=2$}.
\end{cases}\end{equation}
\end{description}
In order to prove these formulas,
note that the box $\Lambda$ is topologically equivalent to a $3$-dimensional ball $B^3\subset \mathbb R^3$.
Therefore, the cohomology for Neumann boundary conditions corresponds to the de Rham cohomology of the ball, which is
given by the Poincar\'e Lemma \cite{BoTu82}.
The cohomology with Dirichlet boundary conditions corresponds to the cohomology with {\it compact support} for the ball $B^3\subset \mathbb R^3$; the result then follows from Poincar\'e duality between de Rham cohomology and
cohomology with compact support \cite{BoTu82}. Finally, the cohomology with periodic boundary conditions is the de Rham cohomology of a
three-dimensional torus $T^3\sim S^1\times S^1\times S^1$; the result is then an application of K\"unneth formula \cite{BoTu82}.

In the following, we will also need a quantitative version of the Poincar\'{e} Lemma for lattice valued $2$-forms, in the form stated next.

\begin{prop}\label{lat-Poincare} Consider the cellular complex associated with a finite portion $\Lambda$ of the FCC lattice $\mathcal L$, as in \eqref{LN}, with Dirichlet or Neumann boundary conditions, together with the
associated set of $p$-forms (note, in particular, that $H^2=\{0\}$). Let $q \in \form^2_\LL$ be closed, with finite support. Let $B \subset \R^3$ be the smallest
parallelepiped with edges parallel to the basis vectors $b_1$, $b_2$, $b_3$ such that $\supp q\subseteq B$.
There exists a constant $c>0$, independent of $\Lambda$, and a 1-form $n \in \form^{1}$ with the following properties:
\begin{enumerate}
\item $n$ is $\LL$-valued,
\item $\dd n = q$,
\item the support of $n$ is contained in $B$,
\item $\max_{e \in \Lambda_1} |n(e)| \leq c \langle q, q \rangle^2$.
\end{enumerate}
\end{prop}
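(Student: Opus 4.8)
The plan is to prove the proposition by exhibiting an explicit discrete homotopy operator (a \emph{sweeping} construction) adapted to the box $B$, rather than merely invoking the abstract vanishing of $H^2$. Since $H^2=\{0\}$ by \eqref{coh.Dir}--\eqref{coh.Neu}, we already know that the closed form $q$ admits \emph{some} primitive; the entire content of the statement is to produce one that is simultaneously $\LL$-valued, supported in $B$, and controlled pointwise by $\langle q,q\rangle$. Because $B$ is a coordinate parallelepiped (hence simply connected with $H^2=\{0\}$), one can fill the dislocation loops $\supp q$ by a surface internal to $B$ through an iterated one-dimensional summation, and the lattice-valuedness and support properties will be manifest from the construction.

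First I would sweep in a single lattice direction, say $b_1$. Setting $n^{(1)}$ to vanish on all edges parallel to $b_1$, I define on each transverse edge $e$ the value $n^{(1)}(e)=\sum_{j\ge 0}q(f_j(e))$, where $f_j(e)$ is the face spanned by $e+jb_1$ and $b_1$, the sum being confined to $B$; this is the discrete analogue of integrating the $2$-form along $b_1$. A direct computation from $\dd n(f)=\sum_{e\in\partial f}n(e)$ shows that $\dd n^{(1)}$ agrees with $q$ on every face having a $b_1$-component (by telescoping of the column sums), so that the residual $q':=q-\dd n^{(1)}$ is a closed $2$-form supported on the faces of a single boundary plane $P$ of $B$ transverse to $b_1$, the closedness of $q'$ within $P$ following from $\dd q=0$. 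The orientation of the sweep is chosen so that edges on the far face of $B$ carry no value, which keeps $\supp n^{(1)}\subseteq B$.

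The residual $q'$ is then a closed $2$-form on the two-dimensional complex $P$, which I dispose of by a second sweep, in a direction $b_2$ parallel to $P$, producing the remaining part $n^{(2)}$ of $n=n^{(1)}+n^{(2)}$ by the same summation formula. The only solvability obstruction left after the two sweeps is a total-flux condition along a line, which is precisely the class of $q$ in $H^2$; by \eqref{coh.Dir}--\eqref{coh.Neu} this class vanishes, so the final residual is zero and $\dd n=q$ exactly. Properties (1)--(3) are then immediate: $n$ is $\LL$-valued because every value is a finite sum of the lattice vectors $q(f)$; its support lies in $B$ because all summations are confined to $B$; and $\dd n=q$ holds by construction.

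For the quantitative bound (4), I would argue as follows. Since $q$ is $\LL$-valued, $|q(f)|\ge 1$ on $\supp q$, whence $|\supp q|\le\langle q,q\rangle$ and, by Cauchy--Schwarz, $\sum_f|q(f)|\le|\supp q|^{1/2}\langle q,q\rangle^{1/2}\le\langle q,q\rangle$. Each value $n^{(1)}(e)$ is a sub-sum of $\sum_f q(f)$, so $|n^{(1)}(e)|\le\langle q,q\rangle$; consequently the residual satisfies $|q'(f)|\le|q(f)|+3\langle q,q\rangle\le C\langle q,q\rangle$ (triangular faces have three boundary edges) with $|\supp q'|\le C\langle q,q\rangle$, and the second sweep gives $|n^{(2)}(e)|\le\sum_{f'}|q'(f')|\le C\langle q,q\rangle^2$. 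This is the origin of the exponent $2$: it is the product of the $O(\langle q,q\rangle)$ size of the residual values and the $O(\langle q,q\rangle)$ bound on the number of nonzero residual faces, and it is insensitive to the (possibly large) diameter of $B$ because only faces in $\supp q$ and $\supp q'$ ever contribute. I expect the main obstacle to be the bookkeeping of the FCC face/edge combinatorics of Appendix~\ref{app.FCC}: verifying, via $\dd q=0$, that the residual collapses to a single plane, that no value leaks outside $B$, and that the two-stage construction closes up exactly, is where the explicit $\partial f$ sums are genuinely more intricate than on $\Z^3$, and this is the step that demands the most care.
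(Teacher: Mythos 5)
Your construction is, in substance, the paper's own proof: the authors dispose of Proposition~\ref{lat-Poincare} in a single line by citing Lemma~2.6 of \cite{KK86}, and that lemma is established by precisely the iterated sweeping (discrete homotopy) argument you describe --- cutting each sweep off at the boundary of $B$, pushing the residual into a lower-dimensional layer, using closedness plus finite support to kill the final residual, and extracting the bound $c\langle q,q\rangle^2$ as (size of residual values) $\times$ (number of residual faces). The FCC-specific bookkeeping you defer is exactly the ``straightforward adaptation'' the paper also leaves implicit, and it is where the real work sits: for instance, a sweep along $b_1$ telescopes through pairs of triangles tiling the parallelograms over transverse edges in directions $b_2,b_3,b_5,b_6$ (since $b_1-b_2$, $b_1-b_3$, $b_1-b_5$, $b_1+b_6$ are nearest-neighbor vectors), but fails for $b_4$-edges, because neither diagonal $b_1\pm b_4$ is a lattice bond and the plane spanned by $b_1,b_4$ contains no faces of the complex, so the sweep must be reorganized around this case.
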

The proof is a straight-forward adaptation of Lemma~{3.2} in \cite{KK86}.

\subsection{Hodge decomposition}\label{HD}
In this section, we obtain a representation of $H_\mathrm{AO}(u,\sigma)$ in terms of $\dd \sigma$. Setting $q = \dd \sigma$ we can choose a representative displacement- and slip-field $(u_\sigma,\sigma_q) \in \form^0\times \form^1$ such that $\dd \sigma_q = q$ and $\dd u_\sigma = \sigma-\sigma_q$. With such a choice
$$ H_\mathrm{AO}(u,\sigma) = H_\mathrm{AO}(u-u_\sigma,\sigma_q).$$
Interestingly it is possible to choose $(u_\sigma,\sigma_q)$ so that the energy decomposes into
a purely elastic part and a dislocation part
$$ H_\mathrm{AO}(u,\sigma) = H_\mathrm{AO}(u-u_\sigma,0) + H_\mathrm{AO}(0,\sigma_q), $$
cf Theorem~\ref{energy-decomp} below. In general $\sigma_q$ is not $\mathcal L$-valued, which means that
a physical interpretation is not obvious. While the additive decomposition simplifies our analysis
significantly, we believe that it is not central for the validity of our main results.

To establish the additive decomposition of the Ariza-Ortiz energy we employ the classic
Hodge decomposition. The fundamental idea is to construct the relevant $u_q$ and $\sigma_q$ in terms of solutions of the Poisson equation.

The Laplace operator $\Delta_p:\form^p \to \form^p$ is defined by
\begin{equation}\label{Lap}\Delta_p:=\dd_{p-1} \dd^*_{p-1}+\dd^*_{p}\dd_p,\qquad p=0,1,2,3\end{equation}
where $\dd^*_{p-1}:\form^p\to\form^{p-1}$ is the adjoint of $\dd_{p-1}$, with respect to $\langle \cdot,\cdot\rangle$ (for $p=0$, \eqref{Lap} should be interpreted as $\Delta_0=\dd^*_0\dd_0$, i.e.,
$\dd_{-1}=\dd^*_{-1}:=0$). Also in this case, as in Sect.\ref{sec.DEC}, we will drop the dependence upon $p$ whenever the space, which $\Delta_p$ or $\dd^*_{p-1}$ act on, is clear from  the context.
Note that $\dd$ and $\dd^*$ both commute with $\Delta$ (that is, $\dd_p\Delta_p=\Delta_{p+1}\dd_{p}$ and $\dd_{p-1}^*\Delta_p=\Delta_{p-1}\dd_{p-1}^*$).
\begin{prop} If $H^p=\{0\}$, then the Laplace operator $\Delta: \form^{p}\to \form^{p}$ is invertible.
\end{prop}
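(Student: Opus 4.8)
The plan is to exploit finite-dimensionality to reduce invertibility to injectivity, and then to identify the kernel of $\Delta_p$ with the space of harmonic forms, which the hypothesis $H^p=\{0\}$ forces to be trivial. Since $\form^p$ is a finite-dimensional Hilbert space, $\Delta_p$ is invertible if and only if it is injective, so it suffices to show $\nullo\Delta_p=\{0\}$.

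The key step is the standard Bochner-type identity obtained by pairing $\Delta_p u$ with $u$. Using the definition \eqref{Lap} and the defining adjoint property of $\dd^*$, one computes
$$\langle \Delta_p u, u\rangle = \langle \dd^*_{p-1}u, \dd^*_{p-1}u\rangle + \langle \dd_p u, \dd_p u\rangle = \|\dd^*_{p-1}u\|^2 + \|\dd_p u\|^2.$$
Hence $\Delta_p u = 0$ forces both summands to vanish, so every $u\in\nullo\Delta_p$ is simultaneously closed ($\dd_p u = 0$) and co-closed ($\dd^*_{p-1}u=0$); the converse inclusion is immediate, and therefore $\nullo\Delta_p$ coincides with the space of harmonic forms.

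Finally I would invoke the hypothesis together with the elementary orthogonality $\nullo\dd^*_{p-1}=(\range\dd_{p-1})^\perp$, valid for the adjoint of any linear map between finite-dimensional inner-product spaces. Let $u\in\nullo\Delta_p$. Closedness gives $u\in\nullo\dd_p$, and since $H^p=\nullo\dd_p/\range\dd_{p-1}=\{0\}$ we have $\nullo\dd_p=\range\dd_{p-1}$, so $u\in\range\dd_{p-1}$. Co-closedness gives $u\in\nullo\dd^*_{p-1}=(\range\dd_{p-1})^\perp$. Therefore $u$ lies in $\range\dd_{p-1}\cap(\range\dd_{p-1})^\perp=\{0\}$, whence $u=0$. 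This shows $\nullo\Delta_p=\{0\}$ and hence the invertibility of $\Delta_p$.

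There is no serious obstacle here: the argument is pure finite-dimensional Hodge theory, and everything reduces to the two displayed facts above. The only point requiring care is keeping the indices of $\dd_p$, $\dd^*_{p-1}$ and the associated (co)homology straight, and recalling the conventions $\dd_{-1}=\dd^*_{-1}:=0$ and $\dd_3:=0$, so that the Bochner identity and the orthogonality relation remain valid at the extreme degrees $p=0$ and $p=3$ (where one summand drops out and one of $\range\dd_{-1}$, $\nullo\dd_3$ is trivial, respectively the full space).
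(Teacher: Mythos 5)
Your proof is correct, and its core is the same as the paper's: the identity $\langle u,\Delta u\rangle=\|\dd u\|^2+\|\dd^* u\|^2$ shows harmonic forms are closed and co-closed, $H^p=\{0\}$ upgrades closed to exact, and orthogonality of exact against co-closed forms kills the kernel. Where you diverge is in the packaging, and your route is leaner in two respects. First, the paper's centerpiece is the full Hodge decomposition $\form^p=\nullo\dd_p\oplus\nullo\dd_{p-1}^*$ (eq.~\eqref{empint}), which requires proving both orthogonality \emph{and} that the two kernels span $\form^p$; you bypass the spanning part entirely by invoking the elementary adjoint identity $\nullo\dd^*_{p-1}=(\range\dd_{p-1})^\perp$ together with $\nullo\dd_p=\range\dd_{p-1}$, which is all that injectivity needs. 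Second, the paper proves surjectivity by a separate self-adjointness argument ($u\perp\range\Delta\Rightarrow\Delta u=0\Rightarrow u=0$), whereas you get it for free from finite-dimensionality; both are fine, yours is shorter. The trade-off is that the paper's extra work is not wasted: the decomposition \eqref{empint} is reused later, notably in the proof of Theorem~\ref{energy-decomp}, where $\sigma_q$ is split into exact and co-closed parts. So your argument is the minimal proof of the proposition as stated, while the paper's proof deliberately establishes a stronger, reusable intermediate result. Your attention to the edge cases $p=0,3$ (where one summand of $\Delta$ and one of the relevant spaces degenerate) is a point the paper leaves implicit.
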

\begin{proof}
The invertibility of the Laplacian is an immediate consequence of the classical Hodge decomposition, which we prove next:
\begin{eqnarray} \label{empint}
\form^p =\nullo\dd_p \oplus \nullo\dd_{p-1}^*.
\end{eqnarray}
In order to prove this decomposition, we first demonstrate that $\nullo\dd_p$ and $\nullo\dd_{p-1}^*$ are orthogonal. Let $u,v\in \form^p$ be such that $\dd u=0$ and $\dd^*v=0$. Since $H^p=\{0\}$, there exists
$w\in\form^{p-1}$ such that $u=\dd w$. Therefore
$$\langle u,v\rangle=\langle \dd w,v\rangle=\langle w,\dd^*v\rangle=0,$$
as desired. Next, we demonstrate that
$$\varphi \in \(\nullo\dd_p \oplus \nullo\dd_{p-1}^*\)^\perp \Rightarrow \varphi=0.$$
Suppose that: (i) $\langle\varphi, u\rangle=0$, $\forall u\in \nullo \dd_p$, that is, for all $p$-forms $u$ such that $u=\dd w$ for some
$w\in\form^{p-1}$; (ii) $\langle\varphi, v\rangle=0$, $\forall v\in \nullo \dd_{p-1}^*$, in particular for all the $p$-forms such that $v=\dd^*z$, for some $z\in\form^{p+1}$.
By using (i), $\langle\varphi, \dd w\rangle=\langle\dd^*\varphi, w\rangle=0$, $\forall w\in\form^{p-1}$, that is, $\dd^*\varphi=0$. Moreover, by using (ii),
$\langle\varphi, \dd^* z\rangle=\langle\dd\varphi, z\rangle=0$, $\forall z\in\form^{p+1}$, that is, $\dd\varphi=0$ $\Rightarrow$ $\varphi=\dd\psi$, for some $\psi\in\form^{p-1}$.
In conclusion, $\langle\varphi,\varphi\rangle=\langle\dd\psi,\varphi\rangle=\langle\psi,\dd^*\varphi\rangle=0$, as desired (in the last step we used that $\dd^*\varphi=0$).
This concludes the proof of \eqref{empint}.

We are now in position of proving the invertibility of $\Delta$. We first prove injectivity: assume that $\Delta u=0$, from which
$$ 0=\langle u,\Delta u\rangle = \langle \dd u, \dd u\rangle + \langle \dd^* u, \dd^* u\rangle,$$
that is $\dd u=0$ and $\dd^* u=0$. In view of \eqref{empint}, this implies that $u=0$ and, therefore, $\Delta$ is injective.
Next, we prove surjectivity:  assume that $u \in (\range \Delta)^\perp$, i.e.
$\langle u,\Delta v\rangle=0$ for all $v$. Then $\langle \Delta u, v\rangle =0$ for all $v$, that is $\Delta u=0$, which implies $u=0$, as we already saw.
\end{proof}

The condition $H^p=\{0\}$ motivates the use of Dirichlet or Neumann boundary conditions, in which cases $H^1=H^2=\{0\}$, so that the Laplacian acting on $1$- and $2$-forms are invertible.

With this notation the Ariza-Ortiz energy can be written as a functional
\begin{eqnarray} \label{HL}
H_\mathrm{AO}:\form^0\times \form^1_\LL\to \R,\; H_\mathrm{AO}(u,\sigma) = \frac{1}{2} \langle \dd u -\sigma, B(\dd u - \sigma)\rangle,
\end{eqnarray}
where $B \in \mathrm{Lin}(\form^1,\form^1)$ is {the linear operator such that, for any $v\in \form^1$, 
$$ (Bv)(e) = (v(e)\cdot \delta e)\,\delta e, $$
and,} given $e=(x,y)$, we denoted $\delta e:=y-x$.
We are now in a position to establish the decomposition of the Ariza-Ortiz energy into elastic and dislocation part.
\begin{thm} \label{energy-decomp}
Let $\Lambda\subset \mathcal L$ be as in \eqref{LN}, and consider the Ariza-Ortiz Hamiltonian \eqref{HL} with Dirichlet or Neumann boundary conditions.
Assume that $q \in \form^2_{\LL}$ satisfies $\dd q =0$. For any $\sigma \in \form^1_\LL$ with the property $\dd \sigma = q$,
the Ariza-Ortiz energy admits the additive decomposition
\begin{equation}
\label{AOdecomp}
H_{\mathrm{AO}}(u,\sigma) = \frac12\langle \dd(u-u_\sigma), B(\dd(u-u_\sigma)\rangle + \frac12\langle \sigma_q, B\sigma_q\rangle,
\end{equation}
where $\sigma_q$ is the minimizer of
$v \mapsto \langle v ,B v\rangle$ on $\form^1$ (rather than on $\form^1_\LL$) subject to the constraint that $\dd v =q$, and $u_\sigma$ is defined as
\begin{equation} \label{uqdef}
u_\sigma = \dd^*\Delta^{-1}(\sigma-\sigma_q).
\end{equation}
If we take Dirichlet boundary conditions, then the minimizer $\sigma_q$ is given by $\sigma_q = Gq$ with
\begin{equation} \label{defG}
G = (1-\dd A^{-1} \dd^* B)\dd ^* \Delta^{-1},
\end{equation}
where $A:{\form^0\to\form^0}$ is the invertible operator $A:=\dd^* B\dd$. \end{thm}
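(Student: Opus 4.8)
The plan is to verify the claimed additive decomposition \eqref{AOdecomp} directly, and then to identify the minimizer $\sigma_q$ with the explicit operator $Gq$ in the Dirichlet case. First I would fix $\sigma\in\form^1_\LL$ with $\dd\sigma=q$ and define $\sigma_q$ as the unique minimizer of $v\mapsto\langle v,Bv\rangle$ on the affine space $\{v\in\form^1:\dd v=q\}$. This minimizer exists and is unique because $B$ is symmetric and, by the Korn-type rigidity estimate \eqref{c0}, coercive on $\form^1$; moreover, since $H^2=\{0\}$ for Dirichlet/Neumann boundary conditions and $q$ is closed, the constraint set is nonempty. The Euler--Lagrange condition for this constrained quadratic minimization is that $B\sigma_q$ be orthogonal to the tangent space $\{\dd w:w\in\form^0\}=\range\dd_0$, i.e.\ $\langle B\sigma_q,\dd w\rangle=0$ for all $w\in\form^0$, which is equivalent to $\dd^*B\sigma_q=0$.

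Next I would establish the energy splitting. The key algebraic step is to write $\dd u-\sigma=\dd(u-u_\sigma)-\sigma_q+(\dd u_\sigma-(\sigma-\sigma_q))$ and to show, via the definition \eqref{uqdef} of $u_\sigma$, that the residual vanishes so that $\dd u-\sigma=\dd(u-u_\sigma)-\sigma_q$. Expanding $H_{\mathrm{AO}}=\tfrac12\langle \dd u-\sigma,B(\dd u-\sigma)\rangle$ then produces the two diagonal terms $\tfrac12\langle\dd(u-u_\sigma),B\dd(u-u_\sigma)\rangle$ and $\tfrac12\langle\sigma_q,B\sigma_q\rangle$ together with a cross term $-\langle\dd(u-u_\sigma),B\sigma_q\rangle$. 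This cross term vanishes precisely by the Euler--Lagrange condition $\dd^*B\sigma_q=0$, since $\langle\dd(u-u_\sigma),B\sigma_q\rangle=\langle u-u_\sigma,\dd^*B\sigma_q\rangle=0$. This is the heart of why the decomposition is \emph{additive} rather than merely a lower bound, and it is the conceptual crux of the whole theorem.

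For the explicit Dirichlet formula $\sigma_q=Gq$ with $G=(1-\dd A^{-1}\dd^*B)\dd^*\Delta^{-1}$, I would proceed constructively. Since $H^2=\{0\}$, the Laplacian $\Delta_2$ is invertible, so one may write $q=\Delta\Delta^{-1}q=\dd\dd^*\Delta^{-1}q$ using $\dd q=0$ (the term $\dd^*\dd$ drops because $\dd q=0$). Thus $\tilde\sigma:=\dd^*\Delta^{-1}q$ is one particular solution of $\dd\tilde\sigma=q$. To enforce the Euler--Lagrange condition $\dd^*B\sigma_q=0$, I would correct $\tilde\sigma$ by an exact form $-\dd\psi$ and solve $\dd^*B(\tilde\sigma-\dd\psi)=0$, i.e.\ $A\psi=\dd^*B\tilde\sigma$ with $A=\dd^*B\dd$. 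Here the invertibility of $A$ on the relevant space must be checked: coercivity of $B$ gives $\langle w,Aw\rangle=\langle\dd w,B\dd w\rangle\geq c_0\langle\dd w,\dd w\rangle$, which is positive for $\dd w\neq 0$; with Dirichlet boundary conditions $\dd_0$ is injective on $\form^0$ (as $H^0=\{0\}$), so $A$ is invertible. Substituting $\psi=A^{-1}\dd^*B\tilde\sigma$ yields $\sigma_q=\tilde\sigma-\dd A^{-1}\dd^*B\tilde\sigma=(1-\dd A^{-1}\dd^*B)\dd^*\Delta^{-1}q$, which is the claimed formula.

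The main obstacle I anticipate is the careful bookkeeping of which subspaces the various operators act on and are inverted on, together with the verification that the particular solution and the correction interact correctly—specifically that applying $(1-\dd A^{-1}\dd^*B)$ preserves the constraint $\dd\sigma_q=q$ (it does, since $\dd\dd A^{-1}\dd^*B\tilde\sigma=0$ by $\dd\dd=0$) while simultaneously achieving $\dd^*B\sigma_q=0$. Checking these two properties of $G$ simultaneously, and confirming that $u_\sigma$ as defined in \eqref{uqdef} indeed satisfies $\dd u_\sigma=\sigma-\sigma_q$ so that the residual in the first paragraph's splitting genuinely vanishes, is where the argument requires the most attention. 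I expect the uniqueness of the minimizer to guarantee that any $\sigma$-dependence drops out and only $q=\dd\sigma$ enters, consistent with the gauge-invariance discussion preceding the theorem.
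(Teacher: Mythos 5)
Your skeleton is the same as the paper's: the Euler--Lagrange condition $\dd^* B\sigma_q=0$, the cancellation of the cross term $\langle u-u_\sigma,\dd^* B\sigma_q\rangle$, and the construction of $G$ by correcting the particular solution $\dd^*\Delta^{-1}q$ by an exact form $-\dd\psi$ with $A\psi=\dd^* B\,\dd^*\Delta^{-1}q$ (the paper phrases this as a Hodge decomposition $\sigma_q=\dd u_0+\dd^*\Delta^{-1}q$, which is the same computation). However, there is a genuine error in your justifications: $B$ is \emph{not} coercive on $\form^1$. On each edge $e$ the block $B(e,e)=\dxe\otimes\dxe$ is a rank-one projection, so $\langle v,Bv\rangle=\sum_e (v(e)\cdot\dxe)^2$ vanishes on every $1$-form that is pointwise orthogonal to the edge directions; $B$ has a huge kernel. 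The inequality you extract from this ``coercivity'', namely $\langle \dd w,B\dd w\rangle\ge c_0\langle\dd w,\dd w\rangle$, is precisely the discrete Korn/rigidity estimate, a nontrivial geometric property of the FCC lattice (it is the reason the paper works on FCC rather than on the simple cubic lattice, whose symbol degenerates on whole coordinate planes); moreover \eqref{c0} is a bound on the Fourier symbol of $A=\dd^* B\dd$, not a statement about $B$, so citing it as coercivity of $B$ misattributes it. Consequently your one-line arguments for (i) uniqueness of the minimizer $\sigma_q$ and (ii) invertibility of $A$ do not stand: what is really needed is that no nonzero field of the form $\dd w$ lies in $\nullo B$, and this is exactly the lattice-specific fact the paper proves separately in Appendix \ref{app.c0} (via $A_\Lambda\ge\sum_{l=1}^3\Pi_l\Delta_l$ and diagonalization in a Dirichlet basis). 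Note that existence of a minimizer does not need coercivity at all (a positive semidefinite quadratic functional on a finite-dimensional affine space attains its infimum), and any minimizer satisfies your Euler--Lagrange condition, so the additive decomposition \eqref{AOdecomp} itself survives; but the formula $\sigma_q=Gq$, with its $A^{-1}$, genuinely requires the appendix input.

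The second gap is that you flag, but never verify, the identity $\dd u_\sigma=\sigma-\sigma_q$, which is what makes the residual in your splitting vanish and is the only place where the definition \eqref{uqdef} is actually used. It follows in two lines from the commutation $\dd\Delta=\Delta\dd$ that you already invoked implicitly when discarding $\dd^*\dd\Delta^{-1}q$: since $\dd(\sigma-\sigma_q)=0$,
\begin{equation*}
\dd^*\dd\,\Delta^{-1}(\sigma-\sigma_q)=\dd^*\Delta^{-1}\dd(\sigma-\sigma_q)=0,
\qquad\text{hence}\qquad
\dd u_\sigma=(\dd\dd^*+\dd^*\dd)\Delta^{-1}(\sigma-\sigma_q)=\sigma-\sigma_q,
\end{equation*}
which is exactly the paper's computation \eqref{duq}. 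With this step written out, and with the invertibility of $A$ referred to Appendix \ref{app.c0} instead of derived from the false coercivity claim, your proposal becomes a complete proof along essentially the same lines as the paper's.
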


\medskip

For later reference, we note that the adjoint of $G$ can be explicitly written as
\begin{equation} G^*=\Delta^{-1}\dd(1-B\dd A^{-1} \dd^*).\label{Gstar}\end{equation}
As an immediate consequence from~\eqref{AOdecomp} one obtains the equation
\begin{equation} \label{enequiv}
\min_u H_\mathrm{AO}(u,\sigma)= \frac12\langle \sigma_q, B\sigma_q\rangle.
\end{equation}

\begin{proof}
Equation~\eqref{uqdef} implies that $\dd u_\sigma = \dd \dd^*\Delta^{-1}  (\sigma-\sigma_q)$. Moreover, thanks to the commutation relation $\dd \Delta = \Delta \dd$, and
recalling that $\dd \sigma = \dd \sigma_q$, we also find that
$\dd^* \dd\Delta^{-1}  (\sigma-\sigma_q)=\dd^*\Delta^{-1} \dd (\sigma-\sigma_q)=0$. Combining these two identities, we find
\begin{equation} \label{duq}
\dd u_\sigma  =(\dd \dd^* + \dd^* \dd) \Delta^{-1} (\sigma-\sigma_q) = \sigma-\sigma_q.
\end{equation}

Furthermore, as $\sigma_q$ is the solution of a constrained minimization problem there exists a Lagrange parameter $\lambda \in \form^2$ such that $\sigma_q$ satisfies the Euler-Lagrange equation $B\sigma_q = \dd^* \lambda$. Hence, thanks to $\dd^* \dd^*=0$, one obtains
\begin{equation} \label{equil}
\dd^* B \sigma_q  =0.
\end{equation}
Therefore,
\begin{eqnarray*}
H_{{\mathrm{AO}}}(u,\sigma) &=& \frac{1}{2}\langle \dd u - \sigma, B(\dd u-\sigma)\rangle
 = \frac{1}{2}\langle \dd u - \sigma+\sigma_q-\sigma_q, B(\dd u-\sigma+\sigma_q -\sigma_q)\rangle\\
 &\stackrel{\eqref{duq}}{=}& \frac{1}{2}\langle\dd(u-u_\sigma),B\,\dd(u-u_\sigma)\rangle + \frac{1}{2}\langle \sigma_q,B \sigma_q\rangle - \langle u-u_\sigma,\dd^* B\sigma_q\rangle\\
 &\stackrel{\eqref{equil}}{=}& H_{{\mathrm{AO}}}(u-u_\sigma,0) + H_{{\mathrm{AO}}}(0,\sigma_q)
\end{eqnarray*}
which establishes~\eqref{AOdecomp}.

To derive formula~\eqref{defG} we decompose $\sigma_q$ according to the Hodge decomposition \eqref{empint}: $\sigma_q=\varphi+\psi$, with $\dd\varphi=0$ and $\dd^*\psi=0$. We find $\psi=\dd^*\Delta^{-1}q$: in fact, with this position, $\dd\varphi=\dd\sigma_q-\dd\dd^*\Delta^{-1}q=0$, where in the last step we used the fact that $\dd\dd^*\Delta^{-1}q=
(\dd\dd^*+\dd^*\dd)\Delta^{-1}q=q$ (in turn, the fact that $\dd^*\dd\Delta^{-1}q=0$ {follows from the fact that $\dd$ commutes with $\Delta^{-1}$ and $\dd q=0$}). 
In conclusion, $\sigma_q=\varphi+\dd^*\Delta^{-1}q$, with $\dd\varphi=0$. Since $H^2=0$, $\varphi$ is exact and, therefore,
\begin{equation} \label{sigmaqdecomp}
 \sigma_q = \dd u_0+\dd^* \Delta^{-1}q,
\end{equation}
for some $u_0\in\form^1$. Equations~\eqref{sigmaqdecomp} and~\eqref{equil} together imply that
$$ \dd^*B(\dd^* \Delta^{-1} q + \dd u_0)=0.$$
Since $\dd^* B\dd = A$ we obtain that $u_0 = - A^{-1} \dd^*B\dd^* \Delta^{-1}q$ and, therefore,
$\sigma_q = (1-A^{-1}\dd^*B) \dd^* \Delta^{-1} q$, as desired, provided that $A$ is invertible. Finally, the invertibility of $A$ for $\mathcal L$ the FCC lattice
and $\Lambda\subset \mathcal L$ a finite box with Dirichlet boundary conditions is proved in Appendix \ref{app.c0}.
\end{proof}

\section{Proof of Theorem~\ref{lro_simp.thm}} \label{proof_lro.sec}

Our goal is to compute a lower bound on
\begin{equation}\label{uuu}\mathbb{E}_{\beta,\Lambda}({\varphi}) =
\frac{1}{Z_{\beta,\Lambda}}\sum_{\sigma \in \mathcal S}\,\int du\, \e^{-\beta\,(H_\mathrm{AO}(u,\sigma) +W(\dd \sigma ))}\, {\varphi(u)},\end{equation}
{in the two cases that $\varphi(u)=\varphi_{v_0;x}(u)=\cos(u(x)\cdot v_0)$ and $\varphi(u)=\varphi_{v_0;x,y}(u)=\cos((u(x)-u(y))\cdot v_0)$.}
{These functions can be conveniently rewritten in terms of the functions $g,h$ and $\tilde g, \tilde h$}, defined as follows. 
\begin{itemize}
\item
{$g\equiv g_{v_0;x}:= {\bf 1}_{x}v_0$ and $\tilde g\equiv g_{v_0;x,y}:=({\bf 1}_x-{\bf 1}_y)v_0$}, or equivalently 
$${\langle g,u\rangle = u(x)\cdot v_0, \qquad \langle \tilde g,u\rangle=(u(x)-u(y))\cdot v_0.}$$
\item
$\dd^* h =g$ {and $\dd^* \tilde h =\tilde g$}, or equivalently $\langle h,\dd u\rangle = \langle g,u\rangle$ {and 
$\langle \tilde h,\dd u\rangle = \langle \tilde g,u\rangle$.}
\end{itemize}
To show that the equation $\dd^*h = g$ actually admits a solution we can consider a pairwise disjoint collection of {oriented} edges $(e_i)_{i=1\ldots n} \subset E_1$ that form an 
oriented connected path 
$\mathcal P_{{x\to x_{ext}}}$ from $x$ to $x_{ext}$, where $x_{ext}$ is some vertex outside $\Lambda$, which we fix once and for all at a distance $1$ from $\Lambda$.
With such a path we can define
$$h(e)\equiv h_{v_0;x,{x_{ext}}}(e):= \left\{\begin{array}{rl} -v_0 & \text{ if } e\in \mathcal P_{{x\to x_{ext}}},\\
\phantom{-}v_0 & \text{ if } {\bar e}\in\mathcal P_{{x\to x_{ext}}},\\
0 & \text{ else },\end{array}\right.$$
{where in the second line $\bar e$ is the edge with the same vertices as $e$ but opposite orientation. 
Analogously, we let $\tilde h=h_{v_{0};x,y}$ and note that, with this choice, $\dd^*\tilde h=\tilde g$.} 
In terms of these definitions, for any $\sigma\in\form^1_\LL$,
\begin{equation} \label{obsrep.eq}\begin{split} 
&{\varphi_{v_0;x}(u)=\cos\langle g,u\rangle=\cos\langle h, \dd u\rangle=\cos\langle h, \dd u-\sigma\rangle,}\\
&{\varphi_{v_0;x,y}(u)=\cos\langle \tilde g,u\rangle=\cos\langle \tilde h, \dd u\rangle=\cos\langle \tilde h, \dd u-\sigma\rangle,}\end{split}
\end{equation}
{because both $\langle h,\sigma\rangle$ and $\langle \tilde h,\sigma\rangle$ are equal to $0$ mod $2\pi$.}

Thanks to the decomposition \eqref{AOdecomp}, {in the cases of interest} \eqref{uuu} can be rewritten as
\begin{equation}\label{uuuu}\mathbb{E}_{\beta,\Lambda}({\varphi_{v_0;x}}) =
\frac{1}{Z_{\beta,\Lambda}}\sum_{\sigma \in \mathcal S}\,\int du\, \e^{-\frac{\beta}2 \langle \dd(u-u_\sigma), B\dd(u-u_\sigma)\rangle} e^{-\beta W(q)-\frac{\beta}2\langle \sigma_q, B\sigma_q\rangle}
\, {\cos\langle h,\dd u-\sigma\rangle},\end{equation}
where $q=\dd\sigma$, {and similarly for $\mathbb{E}_{\beta,\Lambda}(\varphi_{v_0;x,y})$, with $h$ replaced by $\tilde h$.} 
Recalling \eqref{duq}, we can rewrite $\dd u-\sigma=\dd (u-u_\sigma) -\sigma_q$, with $\sigma_q=Gq$, so that, renaming $u-u_{\sigma}\equiv u'$,
\begin{equation}\label{uuuuh}\mathbb{E}_{\beta,\Lambda}(\varphi_{v_0;x}) =
\frac{1}{Z_{\beta,\Lambda}}\sum_{q\in \form^{2}_*}\,\int du'\, \e^{-\frac{\beta}2 \langle \dd u', B \dd u'\rangle} e^{-\beta W(q)-\frac\beta2\langle q, G^*B Gq\rangle }
\, {\cos\langle h,\dd u'-Gq\rangle},\end{equation}
where $\form^{2}_*=\{q\in \form^2_\mathcal L: \, \dd q=0\}$ is the set of closed, lattice-valued, $2$-forms {satisfying Dirichlet boundary conditions on $\Lambda$};
{again, $\mathbb{E}_{\beta,\Lambda}(\varphi_{v_0;x,y})$ admits an analogous representation, with $h$ replaced by $\tilde h$.}
Note that the probability measure in the right side of \eqref{uuuuh}
is factorized: it is the product of
a Gaussian measure $\mathbb P_{\beta,\Lambda}^{sw}$ on $u'$ (the {\it spin wave} part of the measure) times a discrete measure $\mathbb P_{\beta,\Lambda}^{dis}$ on the dislocation cores $q$.
This factorization property is due to the quadratic nature of the Ariza-Ortiz model, and makes our statistical mechanics version of the Ariza-Ortiz model reminiscent of the
Villain model for classical rotators. Of course, the partition function inherits the same factorization property:
$Z_{\beta,\Lambda} = Z^{sw}_{\beta,\Lambda}\,Z^{dis}_{\beta,\Lambda}$, with
\begin{equation}
Z^{sw}_{\beta,\Lambda} = \int du'\, \e^{-\frac{\beta}2\langle \dd u',B\dd u'\rangle}, \qquad
Z^{dis}_{\beta,\Lambda}= \sum_{q\in\form^2_*}\e^{-\beta W(q)-\frac\beta2\langle q, G^*B Gq\rangle}.
\end{equation}
Plugging this representation in \eqref{uuuuh} {and in its analogue for $\mathbb E_{\beta,\Lambda}(\varphi_{v_0;x,y})$}, 
and noting that $\mathbb P_{\beta,\Lambda}^{sw}$ and $\mathbb P_{\beta,\Lambda}^{dis}$ are even,
we find
\begin{equation}\begin{split} & {c_{\beta,\Lambda}(v_0;x)=}\mathbb{E}_{\beta,\Lambda}({\varphi_{v_0;x}}) =\mathbb E_{\beta,\Lambda}^{sw} (\cos\langle h,\dd u'\rangle)\,
\mathbb E_{\beta,\Lambda}^{dis} (\cos\langle h, Gq\rangle),\\
& {c_{\beta,\Lambda}(v_0;x,y)=}\mathbb{E}_{\beta,\Lambda}({\varphi_{v_0;x,y}}) =\mathbb E_{\beta,\Lambda}^{sw} (\cos\langle \tilde h,\dd u'\rangle)\,
\mathbb E_{\beta,\Lambda}^{dis} (\cos\langle \tilde h, Gq\rangle).\end{split}\end{equation}
Theorem \ref{lro_simp.thm} is a consequence of the following more refined version thereof.

\begin{thm} \label{lro.thm}
Given $v_0\in\mathcal L^*$, there exist positive constants $C_0,C,c,\beta_0, {r_0}$ such that, if $\beta\ge \beta_0$ {and $|x-y|\ge r_0$,}
\begin{equation}\label{sw.me}\begin{split}
& \lim_{\Lambda\to\mathcal L}\mathbb E_{\beta,\Lambda}^{sw} (\cos\langle h,\dd u'\rangle)=e^{-C_0/(2\beta)},\\
& \lim_{\Lambda\to\mathcal L}\mathbb E_{\beta,\Lambda}^{sw} (\cos\langle \tilde h,\dd u'\rangle)=e^{-C_0/\beta}\Big(1+O\big(\scalebox{1.2}{$\frac{\log|x-y|}{|x-y|}$}\big)\Big)
,\end{split}\end{equation}
where {$h=h_{v_0;x,x_{ext}}$ and $\tilde h=h_{v_0;x,y}$}; moreover,
\begin{equation}\label{dis.me} \begin{split}
& \liminf_{\Lambda\to\mathcal L}\mathbb E_{\beta}^{dis} (\cos\langle h, Gq\rangle)\ge {e^{-Ce^{-c \beta}}},\\
& \liminf_{\Lambda\to\mathcal L}\mathbb E_{\beta}^{dis} (\cos\langle \tilde h, Gq\rangle)\ge {e^{-Ce^{-c \beta}}},\end{split}\end{equation}
so that ${\liminf}_{{\Lambda\to\mathcal L}}c_{\beta,\Lambda}(v_0;x)\ge \exp\{- C_0/(2\beta)+O(e^{-c\beta})\}$
and $\liminf_{|x-y|\to\infty}{\liminf}_{{\Lambda\to\mathcal L}}$ $c_{\beta,\Lambda}(v_0;x,y)\ge \exp\{- C_0/\beta+O(e^{-c\beta})\}$.
\end{thm}
The rest of the section is devoted to the proofs of \eqref{sw.me} and \eqref{dis.me}.

\subsection{The spin wave contribution to the two-point function (proof of \eqref{sw.me})}\label{sec.swme}
Recalling the definitions $A=\dd^* B\dd$ and $g=\dd^*h{={\bf 1}_{x}\cdot v_0}$, we find that the spin wave contribution to the {one}-point function is
\begin{eqnarray}
&& \mathbb E_{\beta,\Lambda}^{sw} (\cos\langle h,\dd u'\rangle)= \frac1{Z^{sw}_{\beta,\Lambda}} \int\, du \exp\Big\{\scalebox{1.2}{$-\frac\beta2$}\langle\dd u',B\,\dd u'\rangle+ i \langle h,\dd u'\rangle\Big\}\nonumber \\
 &&=  \frac1{Z^{sw}_{\beta,\Lambda}} \int du'\exp\Big\{\scalebox{1.2}{$-\frac\beta2$}\langle u',A\,u'\rangle+ i \langle g,u'\rangle\Big\}=
 \exp\Big\{\scalebox{1.2}{$-\frac1{2\beta}$}\langle g, A^{-1}g\rangle\Big\}.\label{eq:27}
\end{eqnarray}
As proved in Appendix \ref{app.c0}, the thermodynamic limit of the right side can be explicitly written in Fourier space:
\begin{equation}\label{upb}\lim_{\Lambda \to \mathcal L}\langle g, A^{-1}g\rangle=\int_{\mathcal B}\frac{dk}{|\mathcal B|}\, \hat g(-k)\cdot \hat A^{-1}(k) \hat g(k),\end{equation}
where $\mathcal B=\{\xi_1m_1+\xi_2 m_2+\xi_3 m_3: \ \xi_i\in[0,1)\}$ is the Brillouin zone (recall that $m_1,m_2,m_3$ are the basis vectors of $\mathcal L^*$, see Appendix \ref{app.FCC}), $|\mathcal B|$ is its volume,
\begin{equation}\label{hatgk}\hat g(k)=\sum_{z\in\mathcal L} g(z) e^{ik\cdot z}=v_0 {e^{ik\cdot x}},\end{equation}
and, if $\Pi_l=b_l\otimes b_l$,
\begin{equation} \label{eqAk}\hat A(k)=2\sum_{l=1}^6 \Pi_{l}\,(1-\cos(k\cdot b_l)).\end{equation}
In Appendix \ref{app.c0} we prove that $\hat A(k)$ is singular iff $k\in \mathcal L^*$ and that, if $k$ is close to $0$, $\hat A(k)=\hat A_0(k)(1+O(k))$, with
\begin{equation} \label{c0}c_0k^2\mathds 1\le \hat A_0(k)\le \frac32k^2\mathds 1,\end{equation}
where the positive constant $c_0$ can be chosen, e.g., to $c_0=(3-\sqrt5)/4$.
We remark that this bound depends critically on the structure of the underlying lattice: changing FCC into cubic does not preserve the property that
$\hat A(k)$ behaves qualitatively like the Laplacian $k^2$ at low momenta.
The inverse operator reads:
\begin{equation}\hat A(k)^{-1}=\hat A_0(k)^{-1}(1+O(k)),\label{inv.}\end{equation}
so that the {integral in the right side of \eqref{upb} is convergent and we can rewrite}
$${\lim_{\Lambda\to\mathcal L}\langle g,A^{-1}g\rangle=C_0:=\int_{\mathcal B}\frac{dk}{|\mathcal B|}\,v_0\cdot \hat A^{-1}(k)v_0.}$$
Note that the fact that $C_0$ is finite crucially depends on the fact that we are in three (or, better, in more than two) dimensions: in two dimensions its analogue would be logarithmically divergent.
{This concludes the proof of the first of \eqref{sw.me}. The proof of the second of \eqref{sw.me} is analogous: a repetition of the previous computation implies that 
$\mathbb E_{\beta,\Lambda}^{sw} (\cos\langle \tilde h,\dd u'\rangle)=\exp\{-\frac1{2\beta}(\tilde g,A^{-1}\tilde g)\}$, with}
\begin{equation}\label{upbbis}\begin{split}{\lim_{\Lambda \to \mathcal L}\langle \tilde g, A^{-1}\tilde g\rangle}&{=\int_{\mathcal B}\frac{dk}{|\mathcal B|}\, v_0\cdot \hat A^{-1}(k)v_0|e^{ikx}-e^{iky}|^2}\\
&=2C_0
-2\int_{\mathcal B}\frac{dk}{|\mathcal B|}\, v_0\cdot \hat A^{-1}(k)v_0 e^{-ik(x-y)},\end{split} \end{equation}
{which leads to the second of \eqref{sw.me}, thanks to the fact that}
\begin{equation}\label{logbbis}\Big| \int_{\mathcal B}\frac{dk}{|\mathcal B|}\,v_0\cdot \hat A^{-1}(k)v_0\, e^{-ik\cdot (x-y)}\Big|\le c_1
\frac{{1+}\log|x-y|}{|x-y|},\end{equation}
for a suitable constant $c_1>0$, see Appendix \ref{app.c0} for details.

\subsection{The dislocation contribution to the two-point function (proof of \eqref{dis.me})}\label{sec.disme}

We now want to bound from below the dislocation contribution to the {one}-point function, namely
$$ \mathbb E_{\beta}^{dis} (\cos\langle h, Gq\rangle)=\frac1{Z^{dis}_{\beta,\Lambda}}\sum_{q\in\form^2_*} e^{-\beta W(q)-\frac\beta2\langle q,G^* BGq\rangle}\cos\langle h, Gq\rangle,$$
{and similarly for the two-point function.}
Recall that the probability weight $e^{-\beta W(q)}$ is of factorized form, $e^{-\beta W(q)}=\prod_{f}e^{-\beta w(q(f))}$, where the product runs over the faces that have non zero intersection with $\Lambda$ and
$w(q(f))$ $\ge w_0 |q(f)|^2$ for some positive $w_0$.
Note that this weight can be equivalently rewritten as
\begin{equation}\label{qf}e^{-\beta W(q)}=\Big[\prod_{f}\lambda(q(f))\Big]e^{-\beta \frac{w_0}{2}\langle q,q\rangle},\end{equation}
where $\lambda(x)=\exp\{-\beta[w(x)-{w_0}|x|^2/2]\}\le \exp\{-\beta w_0|x|^2/2\}$.
The observable of interest can be rewritten as $Z^{dis}_{\beta,\Lambda}(h)/Z^{dis}_{\beta,\Lambda}(0)$, with
$$Z^{dis}_{\beta,\Lambda}(h)=\sum_{q\in\form^2_*}e^{-\beta W(q)-\frac{\beta}{2} \langle q,\, G^* B G q\rangle+ i \langle h, Gq\rangle}.$$
The goal is to find a lower bound on $Z^{dis}_{\beta,\Lambda}(h)/Z^{dis}_{\beta,\Lambda}(0)$, of lower order than the spin wave contribution.
We now perform a sine-Gordon transformation: we introduce the Gaussian measure $\mu_\beta(d\phi)$ with covariance $\beta\, (G^*BG+w_0\mathds 1) {>0}$, so that
$$e^{-\frac{\beta}2 \langle q, (G^* B G+w_0\mathds 1) q\rangle}=\int \mu_\beta(d\phi) e^{i\langle q,\phi\rangle},$$
and rewrite
$$Z_{\beta,\Lambda}^{dis}(h)=\sum_{q\in\form^2_*} \Big[\prod_{f} \lambda(q(f))\Big] \int \mu_\beta(d\phi)e^{i\langle q,\phi+ G^* h\rangle}.$$

{\begin{rem}
The presence of the $w_0\mathds 1$ term in the covariance $\beta\, (G^*BG+w_0\mathds 1)$ is crucial for making it positive definite, rather than just 
non-negative. In fact, $G^*BG$ has 
null directions. To see this, take $\sigma\in \mathcal C^1_{\mathcal L}$ to be $+b_4$ resp. $-b_4$ on the edge $(0,b_1)$ resp. $(b_1,0)$, and zero otherwise,
so that $B\sigma=0$ but $q:=\dd\sigma\neq 0$. With these definitions, it is easy to check that 
$(q,G^*BGq)=(\sigma_q,B\sigma_q)=0$: in fact, $\sigma_q$ is the minimizer of $(v,Bv)=\|Bv\|^2$
among the $v\in \mathcal C^1$ such that $\dd v=q$; taking $v=\sigma$, we have
$Bv=0$, so $B\sigma_q=0$.\end{rem}}

We now perform the sum over $q$ in two steps: we first fix the support of $q$ and then sum over the charge configurations compatible with that support:
\begin{equation}\begin{split}Z_{\beta,\Lambda}^{dis}(h)&=\sum_{X\subseteq \Lambda_2}\sum_{\substack{q\in\form^2_*:\\ \supp(q)=X}} \Big[\prod_{f\in\Lambda_2} \lambda(q(f))\Big] \int \mu_\beta(d\phi)e^{i\langle q,\phi+G^* h\rangle}\\ &\equiv \int \mu_\beta(d\phi)\Big[
\sum_{X\subseteq \Lambda_2} K(X,\phi+G^* h)\Big].\end{split}\nonumber
\end{equation}
For short, we shall write $K(X,\phi+ G^* h)=K(X)$.
Note that $K(X_1 \cup X_2) = K(X_1)K(X_2)$ if $X_1, X_2 \subset \Lambda_2$ are {\it disconnected}, i.e., if there is no $3$-cell whose boundary contains both an element of $X_1$ and an element
of $X_2$;
the key thing to observe is that
in such a situation, if we let $q=q_1+q_2$ with $\supp(q_1)=X_1$ and $\supp(q_2)=X_2$, the constraint $d(q_1+q_2)=0$ `factorizes' in $dq_1=dq_2=0$. Note that 
this factorization into {\em locally neutral} contributions is another point where the condition that we are in more than two dimensions enters crucially.
Given $X$, we let $X_1,\ldots, X_n$ be its maximally connected components
and note that
$$K(X)=\kappa(X_1)\cdots \kappa(X_n),$$
where
$$\kappa(X)=\mathds 1(X\,{\text{is}}\,{\text{connected}})\sum_{\substack{q\in\form^2_*:\\ \supp(q)=X}} \Big[\prod_{f\in X} \lambda(q(f))\Big] e^{i\langle q,\phi+G^* h\rangle}$$
The upper bound on $\lambda$ implies that
\begin{equation}\label{upkappa}|\kappa(X)| \leq  \Big(\sum_{\substack{b\in\mathcal L:\\ b\neq 0}}e^{-\frac{\beta}2w_0|b|^2}\Big)^{|X|}.\end{equation}
We use the factorization property of $K$ to rewrite $Z^{dis}_{\beta,\Lambda}(h)$ as
$$Z^{dis}_{\beta,\Lambda}(h)= \int \mu_\beta(d\phi)\Big[\sum_{n\ge 0} \frac1{n!}
\sum_{X_1,\ldots, X_n \subseteq \Lambda_2} \kappa(X_1)\cdots \kappa(X_n) \delta(X_1,\ldots,X_n)\Big],$$
where $\delta(X_1,\ldots,X_n)=\prod_{1\le i<j\le n}\delta(X_i,X_j)$, and $\delta(X,Y)=1$ if $X$ and $Y$ are disconnected, and $=0$ otherwise. As well known, see, e.g., \cite[Proposition 5.3]{FV},
\begin{eqnarray} && \sum_{n\ge 0} \frac1{n!}
\sum_{X_1,\ldots, X_n \subseteq \Lambda_2} \kappa(X_1)\cdots \kappa(X_n) \delta(X_1,\ldots,X_n)=\nonumber\\
&& \label{eq:34}=\exp\Big\{
\sum_{n\ge 1}
\sum_{X_1,\ldots, X_n \subseteq \Lambda_2} \kappa(X_1)\cdots \kappa(X_n) \varphi(X_1,\ldots,X_n)\Big\},\end{eqnarray}
where $\varphi(X_1,\ldots,X_n)$ is the Ursell function: if $G_n$ is the complete graph on the vertex set $\{1,\ldots,n\}$,
$$ \varphi(X_1,\ldots,X_n)= \frac1{n!}\sum_{\substack{G\subseteq G_n\\
{\rm connected}}}\prod_{\{i,j\}\in G}(\delta(X_i,X_j)-1),\qquad {\rm for}\quad n>1,$$
while $\varphi(X_1)=1$, for $n=1$. Note that $\kappa(X_1)\cdots \kappa(X_n)\varphi(X_1,\ldots,X_n)$ is non-zero only if $Y=X_1\cup\cdots \cup X_n$ is connected.
The sums in \eqref{eq:34} are absolutely convergent, uniformly in $\Lambda$, provided that there exists a positive function $a(X)$, independent of $\Lambda$, such that, for any fixed,
connected, non-empty $X_*\subseteq\Lambda_2$,
$$\sum_{X\subseteq \Lambda_2}|\kappa(X)|e^{a(X)}(1-\delta(X,X_*))\le a(X_*),$$
see \cite[Theorem 5.4]{FV}. In our case, if $\beta$ is sufficiently large, thanks to the upper bound on $\kappa(X)$, eq.\eqref{upkappa}, we can choose $a(X)=e^{-\beta w_0/4} |X|$.
We now insert the definition of $\kappa$ in \eqref{eq:34} and rewrite it as
$$\eqref{eq:34}=\exp\Big\{\sum_{q\in\form^2_*}z(\beta,q)e^{i\langle q,\phi+G^* h\rangle}\Big\}$$
with
\begin{equation} z(\beta,q)=\sum_{n\ge 1} \sum_{\substack{q_1,\ldots, q_n\in\form^2_*:\\ q_1+\cdots+q_n=q}}  \Big[\prod_{i=1}^n\mathds 1({X_i\,{\text{is}\, {\text{connected}}}})\Big]
\Big[\prod_{i=1}^n\Big(\prod_{f\in X_i} \lambda(q_i(f))\Big)\Big] \varphi(X_1,\ldots,X_n),\label{zbetaq}\end{equation}
where in the right side $X_i:=\supp(q_i)$. Using the fact that $\lambda(x)$ is exponentially small, as well as the fact that the Ursell function decays exponentially to zero at large distances, we get
that, for $\beta$ large enough,
\begin{equation}\label{CE.1}|z(\beta,q)|\le e^{-\frac\beta{4} w_0 \|q\|_1}e^{-\frac{\beta}{8}w_0}|\supp(q)|,\end{equation}
see Appendix \ref{app.CE} for a proof.
Note also that $z(\beta, q)$ is zero unless $q$ has connected support.
Putting things together we find:
\begin{equation}\nonumber \begin{split}
\frac{Z_{\beta,\Lambda}^{dis}(h)}{Z_{\beta,\Lambda}^{dis}(0)}=\int m_\beta(d\phi) \exp\Big\{-\sum_{q\in\form^2_*}z(\beta,q)\big[&\cos\langle q,\phi\rangle\big(1-\cos\langle q, G^* h\rangle\big)\\
+&\sin\langle q,\phi\rangle\sin\langle q, G^* h\rangle\big]\Big\},\end{split}\end{equation}
where
$$m_\beta (d\phi)=\frac{\mu_\beta(d\phi) e^{\sum_{q	\in\form^2_*}z(\beta,q)\cos\langle q,\phi\rangle }}{\int \mu_\beta(d\phi) e^{\sum_{q\in\form^2_*}z(\beta,q)\cos\langle q,\phi\rangle }}.$$
We now apply Jensen's inequality, i.e., $\int m_\beta(d\phi) \exp\big\{(\cdot)\big\}\ge \exp\big\{\int m_\beta(d\phi)(\cdot)\big\}$, and find (noting that $\int m_\beta(d\phi)\sin\langle q,\phi\rangle=0$ and $|\int m_\beta(d\phi)\cos\langle q,\phi\rangle|\le 1$),
\begin{equation}\label{fracZhZ0}
\frac{Z_{\beta,\Lambda}^{dis}(h)}{Z_{\beta,\Lambda}^{dis}(0)}\ge \exp\Big\{-\sum_{q\in\form^2_*}|z(\beta,q)|\big(1-\cos\langle q, G^* h\rangle\big)\Big\}.\end{equation}
We now need to manipulate $\langle q, G^* h\rangle$. Proposition \ref{lat-Poincare} implies that there exists an $\mathcal L$-valued 1-form $n_q$ such that $\dd n_q=q$.
Recall that
\begin{enumerate}
\item The support of $n_q$ is contained in $B(q)$, the smallest parallelepiped
containing the support of $q$,
\item The maximum of $|n_q|$ is bounded in terms of the $2$-norm of $q$, as follows: $\|n_q\|_\infty\le c\|q\|_2^4$ for some positive $c$.
\end{enumerate}
Therefore, recalling the definition of $G^*$, see \eqref{Gstar}, we get
\begin{equation}\langle q,G^*h\rangle=\langle\dd n_q,\dd \Delta^{-1}(1-B\dd A^{-1} \dd^*)h\rangle=\langle n_q,\dd^*\dd\Delta^{-1}(1-B\dd A^{-1} \dd^*)h\rangle.\end{equation}
Now recall that $\dd^*\dd=\Delta-\dd\dd^*$, so that
\begin{equation}\langle q,G^*h\rangle=\langle n_q,h\rangle-\langle n_q,B\dd A^{-1} \dd^*h\rangle-\langle n_q,\dd\dd^*\Delta^{-1}(1-B\dd A^{-1} \dd^*)h\rangle.\end{equation}
Now the first term in the right side is an integer multiple of $2\pi$, and can be dropped for the purpose of computing the cosine. The last term, by using that $\dd^*$ commutes with $\Delta$, equals
$$-\langle n_q,\dd \Delta^{-1}(\dd^*-\dd^*B\dd A^{-1}\dd^*)h\rangle,$$
which is zero, simply because $\dd^*B \dd A^{-1}=1$. We are left with the second term, which can be rewritten in terms of $g=\dd^*h$; in conclusion:
\begin{equation}\langle q,G^*h\rangle=-\langle n_q,B\dd A^{-1} g\rangle\quad {\rm mod}\,2\pi.\end{equation}
If we now plug this into \eqref{fracZhZ0} and bound $1-\cos x\le x^2/2$, we find
$$\frac{Z_{\beta,\Lambda}^{dis}(h)}{Z_{\beta,\Lambda}^{dis}(0)}\ge \exp\{-\frac12\sum_{q\in\form^2_*}|z(\beta,q)| \langle\epsilon, n_q\rangle^2\}, $$
where $\epsilon=B\dd A^{-1}g$. Recalling that $\|n_q\|_\infty\le c\|q\|_2^4$ and supp $n_q\subseteq B(q)$, we find
$$\langle\epsilon, n_q\rangle^2\le  c^2\|q\|_2^8\sum_{e,e'\in B(q)} \epsilon(e) \epsilon(e') .$$
Using this bound, we get:
$$\frac{Z_{\beta,\Lambda}^{dis}(h)}{Z_{\beta,\Lambda}^{dis}(0)}\ge \exp\{-\frac{c^2}2\sum_{e,e'\in\Lambda_1}\epsilon(e)\epsilon(e') \sum_{q\in\form^2_*}|z(\beta,q)| \|q\|_2^8 \mathds 1(B(q)\ni e,e')\}. $$
Now, using the bound \eqref{CE.1} on $z(\beta,q)$, we get (see Appendix \ref{app.CE} for details)
\begin{equation}\label{CE.2}\sum_{q\in\form^2_*}|z(\beta,q)|\, \|q\|_2^8\ \mathds 1(B(q)\ni e,e')\le e^{-\frac{\beta}{8}w_0[1+ {\rm dist}(e,e')]},\end{equation}
provided that $\beta$ is large enough. Therefore,
$$\frac{Z_{\beta,\Lambda}^{dis}(h)}{Z_{\beta,\Lambda}^{dis}(0)}\ge \exp\{-\frac{c^2}2e^{-\frac\beta{8} w_0}\sum_{e,e'\in\Lambda_1}\epsilon(e)\epsilon(e') e^{-\frac{\beta}{8}w_0 {\rm dist}(e,e')}\}\ge e^{-c(\beta)
\langle\epsilon,\epsilon\rangle}, $$
where in the last step we used Young's inequality and
$$c(\beta)=\frac{c^2}2e^{-\frac\beta{8} w_0} \sum_{e'\in E_1}e^{-\frac{\beta}{8}w_0 {\rm dist}(e,e')}.$$
Finally, we note that
$$\langle\epsilon,\epsilon\rangle=\langle g,A^{-1}g\rangle$$
that, combined with \eqref{eq:27}, implies the desired estimate, {the first of} eq.\eqref{dis.me}. {A step by step repetition of the argument with $h$ and $g$ replaced by $\tilde h$ and $\tilde g$, respectively, 
leads to the second of \eqref{dis.me}.}

\section{Proof of Theorem~\ref{thm.RS}}\label{sec.RS}

In this section we compute the energy of a dislocation dipole $q^n_{\mathrm{dip}}$, see \eqref{disl_dist}, asymptotically as $n\to\infty$, and
the energy of two parallel arrays of dipoles $q^{M,n,m}_{\mathrm{grain}}$, see \eqref{wall_dist}, asymptotically as $M\to\infty$ first, then $n\to\infty$, then $m\to\infty$.

Note that, also in two dimensions, the Ariza-Ortiz energy satisfies  the additive decomposition property \eqref{AOdecomp}, from which we get
\begin{equation}\label{beh}\min \{H_{\mathrm{AO}}(u,\sigma):\ \dd\sigma=q\}=\frac12\langle q,G^* B Gq\rangle,\end{equation}
see \eqref{enequiv} (with some abuse of notation, in this section we denote by $A,B,G$ the analogues for the triangular lattice of the operators $A,B,G$ introduced in Sect.\ref{HD} for the case of the FCC lattice).

\subsection{The energy of a dipole (proof of \eqref{dip.0}).}
Let $\sigma^n_{\mathrm{dip}}$ be such that $\dd\sigma^n_{\mathrm{dip}}=q^n_{\mathrm{dip}}$. Then
$$E_\mathrm{dip}(n) = \min\{ H(u,\sigma^n_\mathrm{dip}) \; : \; u \in \form^0\}.$$
The optimal 0-form $u$ satisfies $A u = \dd^* B \sigma_\mathrm{dip}^n$ and consequentially
\begin{eqnarray} \nonumber
E_\mathrm{dip}(n) &=& \frac{1}{2}\left \langle \sigma_\mathrm{dip}^n, B \sigma_\mathrm{dip}^n \right \rangle - \left \langle \sigma_\mathrm{dip}^n, B\dd A^{-1} \dd^*B \sigma_\mathrm{dip}^n\right \rangle\\ &+& \frac{1}{2} \left \langle \dd  A^{-1} \dd^*B \sigma_\mathrm{dip}^n, B \dd  A^{-1} \dd^*B \sigma_\mathrm{dip}^n \right\rangle
\label{energy} \\
&= & \frac{1}{2} \left \langle \sigma_\mathrm{dip}^n, B \sigma_\mathrm{dip}^n\right\rangle-\frac{1}{2} \left\langle \dd^* B \sigma_\mathrm{dip}^n, A^{-1} \dd^*B \sigma^n_\mathrm{dip}\right\rangle.\nonumber
\end{eqnarray}

We choose
\begin{equation} \label{defsigdip.eq}
 \sigma_\mathrm{dip}^n(x,x+b_l) = \begin{cases} b_1 & \text{ if $l =2$ and } x = j\,b_1 \text{ for some } j \in \{1,\ldots, n\},\\
-b_1 & \text{ if $l=3$ and } x= j\,b_1 -b_3\text{ for some } j \in \{1,\ldots, n\},\\
0 & \text{ else.}
\end{cases}
\end{equation}
It is a simple exercise to check that $\dd\sigma^n_\mathrm{dip}=q^n_\mathrm{dip}$ is satisfied.
For a visualization of the support of the slip-field $\sigma_\mathrm{dip}^n$ see Fig.~\ref{fig.app}.

We compute the energy in \eqref{energy} by using this $\sigma^n_\mathrm{dip}$. We start by computing $B\sigma^n_\mathrm{dip}$:
$$B\sigma^n_\mathrm{dip}(x,x+b_l)=\begin{cases}
-\frac12 b_2 & \text{ if $l=2$ and } x=jb_1, \text{ for some } j\in\{1,\ldots,n\},\\
+\frac12 b_3 & \text{ if $l=3$ and } x=jb_1-b_3, \text{ for some } j\in\{1,\ldots,n\},\\
0 & \text{ else} \end{cases}$$
which implies that the first term in the right side of \eqref{energy} satisfies (recall that the number of red bonds equals $2n$)
$$ \left\langle \sigma^n_\mathrm{dip}, B\sigma^n_\mathrm{dip}\right\rangle = \frac{n}{2}.$$
In order to explicitly compute the second term in the right side of \eqref{energy} in the thermodynamic limit, it is convenient to fix a convention for the Fourier transform: given functions $u,\sigma,q$ on vertices, edges, faces of the
infinite triangular lattice, respectively, we let
\begin{eqnarray*}
& u(x)=\int_{\mathcal B} \frac{dk}{|\mathcal B|}\hat u(k) e^{-ik\cdot x}, & x\in\mathcal T,\\
& \sigma(x,x+b_l)=\int_{\mathcal B} \frac{dk}{|\mathcal B|}\hat \sigma(k,l) e^{-ik\cdot x}, & x\in\mathcal T,\quad{\rm with}\quad l=1,2,3,
\end{eqnarray*}
where
$$\mathcal B=\left\{k \in \R^2 \; : \;k\cdot b_l \in [0,2\pi),\  l =1,2\right\}$$
is the (first) Brillouin zone, and $|\mathcal B|=8\pi^2/\sqrt3$ its area. With this notation, the thermodynamic limit of the second term in the right side of \eqref{energy} can be written as
\begin{equation}\label{d*Bs}\lim_{\Lambda\to\mathcal T}\langle \dd^*B\sigma^n_{\mathrm{dip}}, A^{-1}\dd^*B\sigma^n_{\mathrm{dip}}\rangle=
\int_{\mathcal B}\frac{dk}{|\mathcal B|} \widehat{\dd^* B \sigma_\mathrm{dip}^n}(-k) \hat A^{-1}(k) \widehat{\dd^* B \sigma_\mathrm{dip}^n}(k), \end{equation}
where $\hat A(k)$ is the Fourier symbol of $A$. Note that, for any 0-form $u$,
$$Au(x)=\sum_{l=1}^3\big(2u(x)-u(x+b_l)-u(x-b_l)\big) \Pi_l, $$
where $\Pi_l=b_l\otimes b_l$ is the projector in direction $b_l$. Therefore, passing to Fourier space, we get
\begin{equation}\begin{split}\hat A(k)&=2\sum_{l=1}^3\Pi_l (1-\cos(k\cdot b_l))\\
&=\begin{pmatrix}3-2\cos k_1-\cos\frac{k_1}{2}\cos\frac{\sqrt3 k_2}{2} & \sqrt3 \sin\frac{k_1}{2}\sin\frac{\sqrt3 k_2}{2}\\
 \sqrt3 \sin\frac{k_1}{2}\sin\frac{\sqrt3 k_2}{2} & 3-3\cos\frac{k_1}{2}\cos\frac{\sqrt3 k_2}{2}\end{pmatrix},\end{split}\label{Ak}\end{equation}
which implies
$$\det \hat A(k)=3\Big(\cos\frac{k_1}{2}- \cos\frac{\sqrt3 k_2}{2}\Big)^2+6\Big(1-\cos\frac{k_1}{2}\cos\frac{\sqrt3 k_2}{2}\Big)\big(1-\cos k_1\big)\ge 0,$$
and $\det \hat A(k)=0$ $\Leftrightarrow$ $k=0$ mod $\mathcal L^*$.

In order to compute $\widehat{\dd^* B \sigma_\mathrm{dip}^n}(k)$ in \eqref{d*Bs}, we first note that
$$ \widehat{B\sigma^n_\mathrm{dip}}(k,l) = \begin{cases} -\frac{e^{ik_1}}2 \frac{e^{ink_1}-1}{e^{ik_1}-1}\,b_2 & \text{ if } l=2,\\[0.5em]
+\frac{e^{i{k_1}}}2 \frac{e^{ink_1}-1}{e^{ik_1}-1} e^{-ik\cdot b_3}\,b_3 & \text{ if } l=3,\\[0.5em]
0 & \text{ else.} \end{cases}$$
Moreover, for any $1$-form $f$,
\begin{eqnarray*}
\dd^* f(x) = \sum_{l=1}^3 (-f(x,x+b_l)+f(x-b_l,x)) = \frac{1}{|\mathcal B|} \int_{k \in \mathcal B}dk\, e^{-ik\cdot x} \sum_{l=1}^3(e^{ik\cdot b_l}-1) \hat f (k,l),
\end{eqnarray*}
which implies
\begin{eqnarray*}
\widehat{\dd^* B \sigma_\mathrm{dip}^n}(k) = -\frac{e^{ik_1}}{2} \frac{e^{ink_1}-1}{e^{ik_1}-1} \left[(e^{ik\cdot b_2}-1)b_2- (1-e^{-ik\cdot b_3})b_3\right].
\end{eqnarray*}
In conclusion,
\begin{eqnarray}\nonumber && \lim_{\Lambda\to\mathcal T}\langle  \dd^*B\sigma^n_\mathrm{dip},\, A^{-1} \dd^* B\sigma^n_\mathrm{dip}\rangle
= \frac14\int_{\mathcal B} \frac{dk}{|\mathcal B|}\, \left| \frac{e^{ink_1}-1}{e^{ik_1}-1}\right|^2\cdot\\
&&\qquad \cdot  \left[(e^{-ik\cdot b_2}-1)b_2- (1-e^{ik\cdot b_3})b_3\right]
 \cdot \hat A(k)^{-1}  \left[(e^{ik\cdot b_2}-1)b_2- (1-e^{-ik\cdot b_3})b_3\right] \nonumber \\
&&\hskip4.93truecm=\frac14\int_{\mathcal B} \frac{dk}{|\mathcal B|}\, \frac{1-\cos(k_1n)}{1-\cos k_1}\,F(k),\label{FF}
\end{eqnarray}
with
\begin{equation}F(k):= \left[(e^{-ik\cdot b_2}-1)b_2- (1-e^{ik\cdot b_3})b_3\right] \hat A^{-1}(k)
 \left[(e^{ik\cdot b_2}-1)b_2- (1-e^{-ik\cdot b_3})b_3\right].\label{eF}\end{equation}
In the vicinity of the singularity, letting $\Pi_k=k\otimes k /k^2$ be the projector in direction $k$,
$$\hat A(k)=\frac38 k^2(\mathds 1+2\Pi_k)+O(k^4), $$
so that
$$\hat A^{-1}(k)=\frac8{9k^2}(3\mathds 1-2\Pi_k)(1+O(k^2)).$$
Using these properties of $\hat A^{-1}(k)$ we see that the function $F(k)$ in \eqref{eF} is even, uniformly bounded on $\mathcal B$, and analytic in $k$ away from $k=0$.
In the vicinity of the singularity, it behaves like
\begin{equation} F(k)=\frac2{k^4}(k_1^4+k_2^4-\scalebox{1.2}{$\frac23$} k_1^2k_2^2)+O(k^2)=2-\frac{16}{3}\frac{k_1^2k_2^2}{k^4}+O(k^2).\label{Fexp}\end{equation}
In order to extract the dominant contributions from \eqref{FF}, we rewrite $F(k)=F((0,k_2))+[F(k)-F((0,k_2))]$, with $F((0,k_2))=2$. The contribution from $F((0,k_2))$ reads, for any small $\epsilon>0$:
$$\frac12\int_{\mathcal B} \frac{dk}{|\mathcal B|}\, \frac{1-\cos(k_1n)}{1-\cos k_1}=\frac1{2\pi}\int_{-\epsilon}^\epsilon d k_1 \frac{1-\cos (k_1n)}{k_1^2}+O(1)=\frac{n}{2}+O(1),$$
where the remainder $O(1)$ is uniformly bounded as $n\to \infty$. By using \eqref{Fexp}, we can rewrite
the contribution from $[F(k)-F((0,k_2))]$,
for any small $\epsilon>0$, as
\begin{equation}\label{log}\begin{split}&\frac1{4}\int_{[-\epsilon,\epsilon]^2} \frac{dk}{|\mathcal B|}\ \frac{1-\cos(k_1n)}{k_1^2/2}\Big(-\frac{16}3\frac{k_1^2k_2^2}{k^4}\Big)+O(1)=\\
&=
-\frac1{\sqrt3\,\pi^2}\int_{[-\epsilon,\epsilon]^2} dk\ \frac{k_2^2(1-\cos(k_1n))}{k^4}+O(1),\end{split}
\end{equation}
where, again, the remainder $O(1)$ is uniformly bounded as $n\to \infty$. The dominant term in \eqref{log} can be computed explicitly, and gives
\begin{equation}\eqref{log}=-\frac1{\sqrt{3}\,\pi}\int_{0}^{\epsilon}\frac{dk_1}{k_1}(1-\cos(k_1n))+O(1)=-\frac1{\sqrt3\,\pi}\log n+O(1).\end{equation}
Putting things together, we obtain \eqref{dip.0}, as desired.

\subsection{The energy of a pair of infinite, parallel, grain boundaries (proof of \eqref{RS.0}).}
Let $$\sigma^{M,n,m}_\mathrm{grain}(x,x')=\sum_{j=0}^{M-1}\sigma^n_\mathrm{dip}(x-mj(b_2-b_3),x'-mj(b_2-b_3)),$$ with $\sigma^n_\mathrm{dip}$ defined in~\eqref{defsigdip.eq}.
By proceeding as in the previous subsection, we find that
$$\frac1{M}\left\langle\sigma^{M,n,m}_\mathrm{grain}, B\sigma^{M,n,m}_\mathrm{grain}\right\rangle=\frac{n}{2},$$ while
\begin{equation}\label{eM}\begin{split}&
\frac1{M}\left\langle\dd^*B\sigma^{M,n,m}_\mathrm{grain}, A^{-1} \dd^* B\sigma^{M,n,m}_\mathrm{grain}\right\rangle=\\
&=\frac1{4M}\int_{\mathcal B} \frac{dk}{|\mathcal B|}\, \frac{1-\cos(k_1n)}{1-\cos k_1}\frac{1-\cos(Mm\sqrt3\, k_2)}{1-\cos(m\sqrt3\, k_2)}\,F(k),\end{split}\end{equation}
where $F(k)$ is the same as in \eqref{eF}. If we now let $M\to\infty$,
\begin{equation}\begin{split} &\lim_{M\to\infty}\frac1{M}\left\langle \dd^* B\sigma^{M,n,m}_\mathrm{grain}, A^{-1} \dd^* B\sigma^{M,n,m}_\mathrm{grain}\right\rangle=\\
&=\frac1{16\pi m}\sum_{j=0}^{2m-1}\int_0^{2\pi}dk_1\, \frac{1-\cos(k_1n)}{1-\cos k_1}F\big((k_1,p_{j})\big),
\end{split}\end{equation}
where $p_{j}=p_j(m)=\frac{2\pi j}{m\sqrt3}$.
In order to compute this expression asymptotically, as $n,m\to\infty$, it is convenient to rewrite $F((k_1,p_{j}))$
as $[F((k_1,p_j))-F((0,p_j))]+F((0,\frac{2\pi j}{m\sqrt3}))$, where $F((0,p_j))=2$ (in the case $j=0$, this identity should be understood as $\lim_{k_1\to 0}$ $F((k_1,0))=2$). The contribution from $F((0,p_j))$
reads:
$$\frac1{8\pi m}\sum_{j=0}^{2m-1}\int_0^{2\pi}dk_1\, \frac{1-\cos(k_1n)}{1-\cos k_1}=\frac{n}2,$$
while the one  from $[F((k_1,p_j))-F((0,p_j))]$ reads:
$$\frac1{16\pi m}\sum_{j=0}^{2m-1}\int_0^{2\pi}dk_1\, \frac{1-\cos(k_1n)}{1-\cos k_1}\big[F((k_1,p_j))-2\big].$$
A computation shows that the difference in brackets is $O(k_1^2)$ for $k_1$ close to $0$ (possibly non-uniformly in $j,m$); correspondingly,
if we let $n\to\infty$, the term proportional to $\cos(n k_1)$ under the integral sign goes to zero
as $(\log n)/n$. Summarizing,
$$\lim_{n\to\infty}E_\mathrm{grain}(n,m)=\frac1{32\sqrt3\pi m^2}\sum_{j=0}^{2m-1}\int_0^{2\pi}dk_1\, \frac{2-F((k_1,p_j))}{1-\cos k_1}+O(1/m).$$
The dominant contribution to the first term in the right side as $m\to\infty$ comes from the region $(k_1,p_j)\in[-\epsilon,\epsilon]^2$, the contribution from the complement being bounded uniformly in $m$
(here $\epsilon$ is an arbitrary small, positive, constant). Moreover, by rewriting $1-\cos k_1$ for $k_1$ small as $\frac{k_1^2}2(1+O(k_1^2))$, and by expanding $F(k)$ as in \eqref{Fexp},
we find, letting $J_\epsilon=\lfloor \sqrt3 \epsilon m/(2\pi)\rfloor$,
$$\lim_{n\to\infty}E_\mathrm{grain}(n,m)=\frac4{3\sqrt3\pi m^2}\sum_{j=0}^{J_\epsilon}\int_{0}^{\epsilon}dk_1\, \frac{p_j^2}{(k_1^2+p_j^2)^2}+O(1/m).$$
Finally, recalling that $p_{j}=\frac{2\pi j}{m\sqrt3}$, by summing over $j$ and integrating over $k_1$, we find:
\begin{equation}\lim_{n\to\infty}E_\mathrm{grain}(n,m)=\frac{\log m}{6\pi m} + O(1/m),\label{RS}\end{equation}
asymptotically as $m\to\infty$. This is the desired `Read-Shockley' law for the energy of a grain boundary.

\subsubsection{Comparison of the Read-Shockley formula with the capacitor law.}\label{sec.compar}
As promised above, let us now make a technical comparison between the derivation of the Read-Shockley formula \eqref{RS} and the analogous computation in the case that the operator $B$ is replaced by the identity.
In this case we lose the key feature of our discrete elasticity model, that is, invariance under linearized rotations. This is the physical reason why the scaling of the
corresponding energies are completely different.

More specifically, let $u$ be a $\mathbb R^2$-valued function on $\mathcal T$, $\sigma$ a lattice-valued function on the nearest neighbor bonds of $\mathcal T$, and $q$ a lattice-valued function on the faces of $\mathcal T$,
with finite support and zero total charge, $\sum_f q(f)=0$. Consider the minimum energy defined by
\begin{eqnarray*}
\mathcal E = \frac12\min_{(u,\sigma)} \left\{ |\dd u - \sigma|^2\; : \; \dd \sigma=q \right\} = \frac12\min_v \left\{ |v|^2\; : \; \dd v=q \right\},
\end{eqnarray*}
where the minimum over $v$ is performed over $\mathbb R^2$-valued (rather than $\mathcal T$-valued) functions on the nearest neighbor bonds of $\mathcal T$.
The minimizer $\sigma_q$ is characterized by the Euler-Lagrange equations $\dd \sigma_q=q$ and $\dd^* \sigma_q =0$. Clearly $\sigma_q = \dd^* \Delta^{-1} q$ satisfies those equations and is the unique minimizer. Hence
\begin{eqnarray*}
\mathcal E = \frac{1}{2} \langle q,\Delta^{-1} q\rangle,
\end{eqnarray*}
i.e. the modified Ariza-Ortiz energy reduces to a lattice Coulomb interaction. To compute $\mathcal E$ for specific 2-forms $q$ it is convenient to work with the Fourier representation of the Laplacian acting on 2-forms.
A simple calculation shows that
$$ \Delta q(x,j) = \begin{cases} 3q(x,1) - q(x,2)- q(x+b_2,2)-q(x-b_3,2) & \text{ if } j = 1,\\
3q(x,2) - q(x,1)-q(x-b_2,1)- q(x+b_3,1) & \text{ if } j = 2,
\end{cases}$$
where we use the abbreviation $q(x,1)=q(x,x+b_1,x-b_3)$ and $q(x,2) = q(x,x-b_2,x+b_1)$, see Fig.~\ref{fig.delta} for a visualization of the two face types.

\begin{figure}\begin{center}
\includegraphics[page=1]{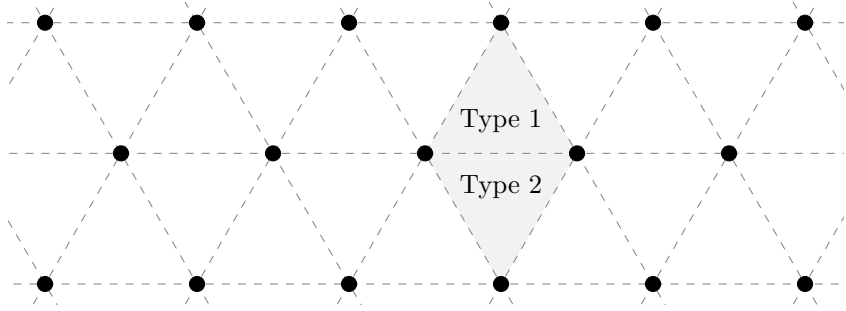}\end{center}
\caption{The face types of the triangular lattice.}
\label{fig.delta}
\end{figure}
We write the Fourier transform of 2-forms $q$ by
\begin{equation*}
q(x,j)=\int_{\mathcal B} \frac{dk}{|\mathcal B|}\hat q(k,j) e^{-ikx},\, x\in\mathcal T,\quad {\rm with}\quad j=1,2
\end{equation*}
and obtain the Fourier symbol
$$\widehat{\Delta q}(k)=\begin{pmatrix} 3 & -\Omega(k) \\ -\Omega^*(k) & 3\end{pmatrix} \hat q(k),$$
where $\Omega(k)=1+e^{-ik\cdot b_2}+e^{ik\cdot b_3}$ and we used the convention that each coefficient of the symbol is interpreted as a multiple of the 2-dimensional identity matrix (i.e. the symbol is actually a Hermitian $4\times 4$-matrix).
In conclusion,
$$\mathcal E=\frac12(q,\Delta^{-1}q)=\frac12\int_\mathcal B\frac{dk}{|\mathcal B|} \frac1{9-|\Omega(k)|^2}\hat q^T(-k) \begin{pmatrix} 3 & \Omega(k) \\ \Omega^*(k) & 3\end{pmatrix} \hat q(k).$$
Let us now compute the energy of the dislocation dipole: If $q=q_\mathrm{dip}^n$ (cf.~\eqref{disl_dist}) then the corresponding energy is:
$$\mathcal E_\mathrm{dip}(n)=3\int_{\mathcal B}\frac{dk}{|\mathcal B|} \frac{1-\cos (k_1n)}{9-|\Omega(k)|^2}.$$
Note that, close to the singularity $k=0$, we have $|\Omega(k)|^2=9-\frac{3}{2}|k|^2+O(k^3)$, so that, for any $\epsilon>0$,
$$\mathcal E_\mathrm{dip}(n)=\frac2{|\mathcal B|}\int_{[-\epsilon,\epsilon]^2}\hskip-.2truecm \frac{1-\cos (k_1n)}{|k|^2} \, dk+O(1)=\frac{\sqrt3}{2\pi}\log n+O(1),$$
which is qualitatively the same as the energy of the dislocation dipole, \eqref{dip.0}. Since the energy of a single dipole is asymptotically the same at large distances, up to a multiplicative constant,
both for this lattice Coulomb case and the standard case of the Ariza-Ortiz model, one may naively expect that the energy of two parallel arrays of dipoles is also qualitatively the same in the two models. However, this is not the
case. If we consider a charge distribution which resembles two parallel capacitor plates where $q = q^{M,n,m}_\mathrm{grain}$ (cf. \eqref{wall_dist}), then
\begin{eqnarray*}
\mathcal E_\mathrm{grain}(n,m)&=&\lim_{M\to \infty} \frac{\sqrt3}{mM}\int_{\mathcal B}\frac{dk}{|\mathcal B|} \frac{1-\cos (k_1n)}{9-|\Omega(k)|^2}\frac{1-\cos(mM\sqrt3 k_2)}{1-\cos(m\sqrt3 k_2)}\\
&=&\lim_{M\to\infty}\frac{\sqrt3}{4\pi m^2}\sum_{j=0}^{2m-1}\int_{0}^{2\pi} dk_1 \frac{1-\cos (k_1n)}{9-|\Omega(k_1,p_j)|^2},
\end{eqnarray*}
where $p_{j}=p_j(m)=\frac{2\pi j}{m\sqrt3}$. The dominant contribution to the right side as $n\to\infty$ at $m$ fixed comes from the region
$(k_1,p_j)\in [-\epsilon/m,\epsilon/m]^2$ mod $\mathcal L^*$,
for any small $\epsilon>0$, the contribution from the complement being bounded
from above uniformly in $n$, as $n\to\infty$ (this is an immediate consequence of the fact that the only zero of $9-|\Omega(k)|^2$ is in $k=0$).
On the other hand, the contribution from $(k_1,p_j)\in [-\epsilon/m,\epsilon/m]^2$ mod $\mathcal L^*$ grows linearly in $n$, as $n\to\infty$, so that,
noting that the $9-|\Omega(k_1,0)|^2=\frac32 k_1^2+O(k_1^3)$ for $k_1$ small,
$$\lim_{n\to\infty}\frac1{n}\mathcal E_\mathrm{grain}(n,m)=\lim_{n\to\infty}\frac1{n}\frac{\sqrt3}{4\pi m^2}\int_{-\epsilon/m}^{\epsilon/m} dk_1 \frac{1-\cos (k_1n)}{\frac32k_1^2}=\frac{\sqrt3}{2m^2}, $$
which is the usual electrostatic energy of an infinite capacitor. Note the linear behaviour of the energy in $n$, as $n\to\infty$, to be compared with the asymptotic {\it independence} of the
energy of a grain boundary in $n$ in the Ariza-Ortiz model, see \eqref{RS.0}.

\appendix

\section{Interpretation of the model}\label{app:interpretation}

In this appendix we provide a heuristic interpretation of the Ariza-Ortiz model, which aims at clarifying its connection with more fundamental microscopic models for atomic crystals.
For a more systematic discussion, from a different perspective, the reader is referred to the original paper \cite{ao05} where the model has been introduced.

For simplicity, we restrict our discussion to two-dimensions. Assume that the particles interact via a classical, rotationally invariant, pair potential $V$, with a non-degenerate minimum at distance,
say, $1$. If such a minimum is deep and narrow, the potential energy of a particle configuration can be well approximated by a sum over nearest neighbors:
\begin{equation}\label{eq:DT}E(z) = \frac12\sum_{\substack{\xi,\eta\in DT(z):\\ \xi\sim \eta}} V(|z(\xi)-z(\eta)|),\end{equation}
where the sum runs over ordered pairs of nearest neighbor sites of the graph $DT(z)$, the Delaunay triangulation of the particle configuration $z$ (i.e., the dual of the Voronoi diagram of $z$);
in \eqref{eq:DT}, $z(\xi)$ and $z(\eta)$ indicate the coordinates in $\mathbb R^2$ of the vertices of the graph $DT(z)$ labelled $\xi$ and $\eta$, respectively. Under the assumption that the pair potential $V$ has a
deep, narrow, minimum, located at $1$, we expect that the low-energy particle configurations are such that the nearest neighbor pairs involved in the summation in \eqref{eq:DT} have distance $|z(\xi)-z(\eta)|$
close to $1$. See Fig.~\ref{fig:DT} for an example.

\begin{figure}[ht]\begin{center}
\includegraphics[scale=0.33]{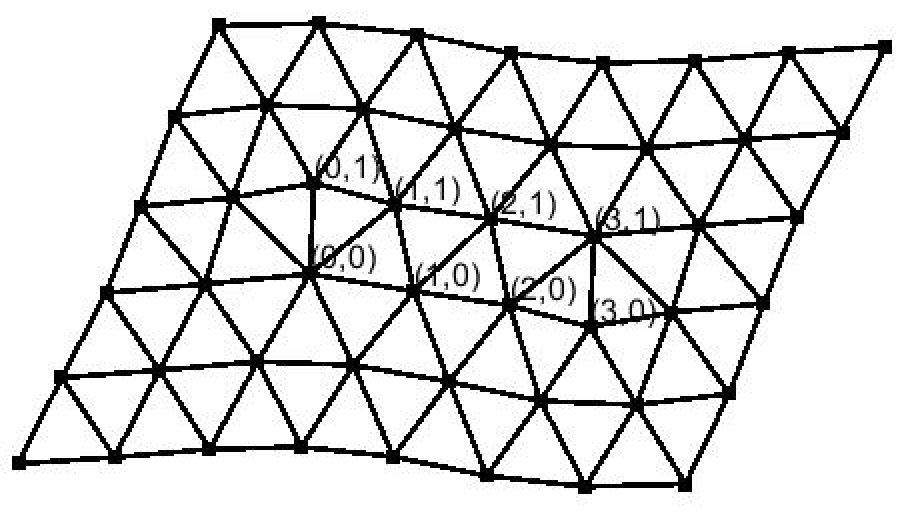}\end{center}
\caption{A particle configuration $z$ and its Delaunay triangulation $DT(z)$. Notice that, in this example, the graph $DT(z)$ is not a global deformation of the regular triangular lattice:
there are sites with 7 neighbours ((0,0) and (3,1)) next to sites with 5 neighbours ((0,1) and (3,0)).
These kinds of defects correspond to dislocations, which play a key role in our analysis.}\label{fig:DT}
\end{figure}

The inconvenient feature of \eqref{eq:DT} is that the sum runs over the nearest neighbor sites of a graph, whose structure depends upon the configuration itself. A more convenient way of expressing the
same energy is to reduce to a fixed reference graph, after appropriate redefinition of the nearest neighbor bonds. In our 2D setting, the natural reference graph is the regular triangular lattice of unit mesh, denoted
$\mathcal T$. Given a particle configuration\footnote{Implicitly, we assume that the particle configuration $z$ is sufficiently `reasonable' that the desired one-to-one correspondences between vertices and nearest-neighbor
pairs of $DT(z)$ and $\mathcal T$ are well-defined. We expect that particle configurations are almost-all reasonable, with respect to the infinite volume Gibbs measure with energy \eqref{eq:DT} and inverse temperature $\beta$, provided $\beta$ is sufficiently large.}
$z$, we establish a one-to-one correspondence of the vertices of $DT(z)$ with those of $\mathcal T$ (call it the `vertex correspondence') and a one-to-one correspondence of the ordered nearest neighbor pairs of $DT(z)$ with those of $\mathcal T$ (call it the `bond correspondence'), in such a way that
\eqref{eq:DT} is re-expressed as a sum over ordered pairs of nearest neighbor sites of $\mathcal T$ (note that the bond correspondence we introduce is {\it not} the one induced by the vertex correspondence, see below for its definition and an explicit example):
\begin{equation}E(z)=\frac12\sum_{\substack{x,y\in\mathcal T:\\ x\sim y}} V(|z(\varphi(x,y))-z(\psi(x,y))|).\label{eq:sumT}\end{equation}
Here $(\varphi(x,y),\psi(x,y))$ is the image of an ordered nearest neighbor pair in $DT(z)$ under the aforementioned bond correspondence\footnote{Reversing the order of the nearest neighbor pair $(x,y)$,
we have $(\varphi(x,y),\psi(x,y))=(\psi(y,x),\varphi(y,x))$, which is a constraint to be imposed on the functions $\varphi,\psi$.}. Letting
\begin{eqnarray}
\label{def-disp}
u(x) &=& z(x)-x\\
\label{def-slip}
\sigma(x,y) &=& y-x -\psi(x,y)+\varphi(x,y)
\end{eqnarray}
where $u$ and $\sigma$ are the displacement and slip fields, respectively,
eq.\eqref{eq:sumT} can be further rewritten as
\begin{equation}
E(z) =\frac12\sum_{\substack{x,y\in\mathcal T:\\x\sim y}}V(|x-y+u(\varphi(x,y))-u(\psi(x,y))+\sigma(x,y)|).\label{preAO}
\end{equation}
The slip-field $\sigma$ is a key feature of our model, and it provides a direct way of measuring the change of the nearest neighbors, as well as the occurence of
dislocations. For example, if the `charge' $q = \sum_{i} \sigma(x_i,x_{i+1})$ is different from zero for some closed path $x_i\in \mathcal T$, then the path encloses a dislocation defect.
The Burger's vectors are closely related to the charges $q$ associated with elementary circuits (the boundaries of the triangular faces of $\mathcal T$), but also
depend on the orientation and the location of the path;
their specific definition is unimportant for our purposes and, therefore, we skip it.

\medskip
{\it Example.} Let us illustrate in a concrete case how to construct the
one-to-one mappings between the vertices and nearest neighbor pairs of $DT(z)$ and those of $\mathcal T$, and how to compute the slip field.
Consider the graph $DT(z)$ associated with the configuration of Fig.~\ref{fig:DT}, and deform it so that its vertex set coincides with that of $\mathcal T$, as depicted in Fig.~\ref{fig:T}.

\begin{figure}[ht]\begin{center}
\includegraphics[scale=0.33]{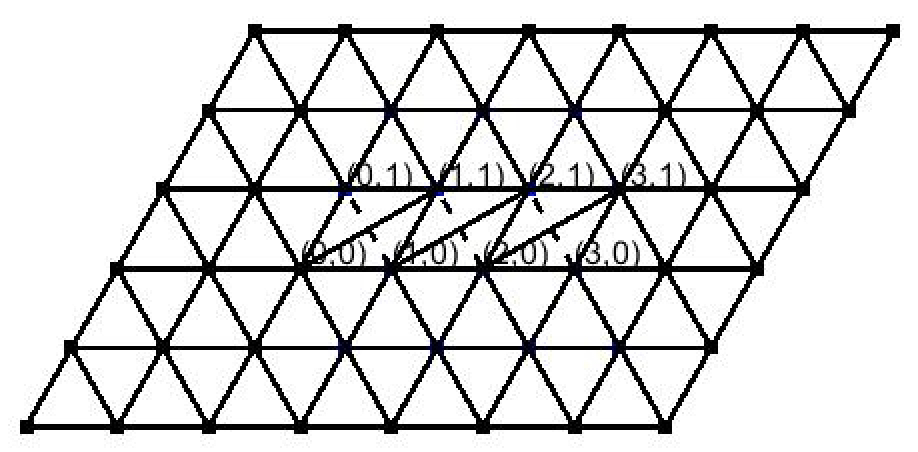}\end{center}
\caption{A deformation of the graph in Fig.~\ref{fig:DT} with the same vertex set as the regular triangular lattice. The solid bonds correspond to the nearest neighbor edges
 of the original graph in Fig.~\ref{fig:DT}. The dashed bonds are the `missing' nearest neighbor edges of $\mathcal T$.}
 \label{fig:T}
\end{figure}

In Figures \ref{fig:DT} and \ref{fig:T} we used the same labels for the corresponding vertices under the mapping induced by the deformation of the graph. Moreover, in Fig.~\ref{fig:T}, we
connected by solid lines the
images of the nearest neighbor pairs of $DT(z)$. These solid lines can be put in one-to-one correspondence with the nearest neighbor pairs of $\mathcal T$; specifically, for any positively oriented\footnote{
We use the convention that the `positively oriented' nearest neighbor pairs of $\mathcal T$ are those of the form $(x,x+b_l)$, with $l=1,2,3$ (here $b_1=\left({1\atop 0}\right)$, $b_2=\left({-1/2\atop \sqrt3/2}\right)$, and $b_3=-b_1-b_2$); of course, the nearest neighbor pairs of the form $(x,x-b_l)$ are called negatively oriented. It is sufficient to define the functions $\varphi,\psi$ and the slip field on positively oriented
n.n. pairs: if $(x,y)$ is negatively oriented, we let $(\varphi(x,y),\psi(x,y))=(\psi(y,x),\varphi(y,x))$, so that $\sigma(x,y)=-\sigma(y,x)$.} nearest
neighbor bond $(x,y)$ of $\mathcal T$, we let $\varphi(x,y)\equiv x$, and
$$\psi(x,y)=\begin{cases}
(0,0) & \text{if $(x,y)=((1,1), (1,0))$,}\\
(1,1) & \text{if $(x,y)=((1,0), (0,1))$,}\\
(1,0) & \text{if $(x,y)=((2,1),(2,0)))$,}\\
(2,1) & \text{if $(x,y)=((2,0),(1,1))$,}\\
(2,0) & \text{if $(x,y)=((3,1),(3,0))$,}\\
(3,1) & \text{if $(x,y)=((3,0), (2,1))$,}\\
y & \text{otherwise.}\end{cases}$$
With these conventions, the slip field on positively oriented edges is
\begin{equation}\label{slipex}\sigma(x,y)=\begin{cases}-b_1 & \text{if $(x,y)\in \{((1,0),(0,1)),\, ((2,0),(1,1)),\,((3,0),(2,1))\}$,}\\
b_1 & \text{if $(x,y)\in\{((1,1),(1,0)),\,((2,1),(2,0)),\,((3,1),(3,0))\}$,}\\
0 & \text{otherwise.}\end{cases}\end{equation}

\medskip

The Ariza-Ortiz model is obtained from eq.\eqref{preAO} under a couple additional approximations. Letting $x-y\equiv \ell_0$ and $u(\varphi(x,y))-u(\psi(x,y))+\sigma(x,y)\equiv \delta_0$,
we rewrite $V(|\ell_0+\delta_0|)$ by expanding it in Taylor series around the minimum: recalling that $|\ell_0|=1$ and assuming $\delta_0$ to be small, we find:
$$V(|\ell_0 +\delta_0|) = V(1)+\frac{V''(1)}2 (\ell_0\cdot\delta_0)^2+ O\(|\delta_0|^3\).$$
Neglecting the remainder, dropping an additive constant and rescaling the resulting energy, we obtain:
\begin{equation}E(z)\approx \frac14\sum_{\substack{x,y\in\mathcal T:\\x\sim y}}\big[(x-y)\cdot\big(u(\varphi(x,y))-u(\psi(x,y))+\sigma(x,y)\big)\big]^2.\label{preAObis}
\end{equation}
This quadratic approximation corresponds to the standard small-strain assumption in continuum mechanics. Finally, we replace
\begin{equation}
u(\varphi(x,y))-u(\psi(x,y)) \approx u(x)-u(y),
\end{equation}
which corresponds to the `linearized plasticity' approximation in continuum mechanics. After these replacements, we obtain
\begin{equation}E(z)\approx \frac14\sum_{\substack{x,y\in\mathcal T:\\x\sim y}}\big[(x-y)\cdot\big(u(x)-u(y)+\sigma(x,y)\big)\big]^2,
\end{equation}
which is the Ariza-Ortiz model. 

All the approximations involved in the previous scheme are uncontrolled, and their validity should be (at least) checked a posteriori, by showing that the `typical', low energy, configurations
of the Ariza-Ortiz Hamiltonian are close (in a sense to be defined) to those of the original, realistic, Hamiltonian. This remains to be done: in fact, proving (even at heuristic level) the correctness of
these approximations is a major challenge in the field and goes beyond the purposes of our paper. From a numerical point of view, the use of the Ariza-Ortiz Hamiltonian gives results in qualitative
agreement with the more realistic energy function \eqref{preAO}, see Fig.~\ref{dislocation_pair.fig}.

\begin{figure}[h]
\centering
\begin{subfigure}{.5\textwidth}
\includegraphics[width=1.1\linewidth]{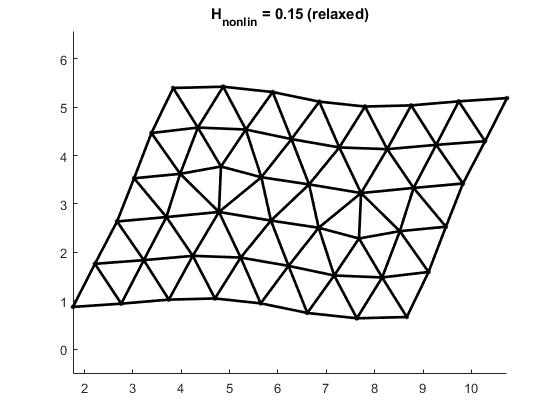}
\end{subfigure}%
\begin{subfigure}{.5\textwidth}
\centering
\includegraphics[width=1.1\linewidth]{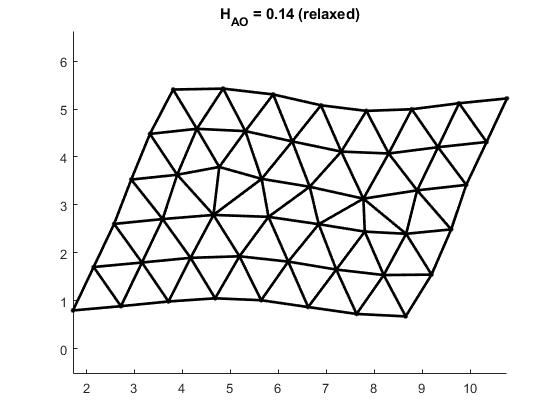}
\end{subfigure}
\caption{A visualization of the relaxed configuration corresponding to the minimum over $u$ of: the energy function in \eqref{preAO} with $V(x)=x^2/2$ at fixed Delaunay graph, equal to the one of Fig.~\ref{fig:DT}
(left panel); the Ariza-Ortiz Hamiltonian at fixed slip field, equal to the one in \eqref{slipex}.}
\label{dislocation_pair.fig}
\end{figure}

\section{On the energy of a grain supported on an infinite vertical strip}\label{app.grain}

In this appendix we discuss the connection between the energy $E_{\text{grain}}(n,m)$ defined in \eqref{defEgrain} and the optimal grain energy $\mathcal E_{\mathcal G}(S)$ defined in
\eqref{eq:GSen}, in the case of a grain $\mathcal G$ supported in a vertical strip of width $n$, `rotated' by an `angle' $\theta\sim 1/m$. As anticipated in Sect.\ref{subsec:RS}, for simplicity,
we restrict our attention to two-dimensions. We recall that $\mathcal T$ is the infinite 2D triangular lattice with basis vectors $b_1=\left({1\atop 0}\right)$, $b_2=\left({-1/2\atop \sqrt3/2}\right)$, and we let
$b_3=-b_1-b_2$. We also let $m_1=\frac{4\pi}{\sqrt{3}}\left({\sqrt{3}/2 \atop\phantom{-}1/2}\right)$,
$m_2= \frac{4\pi}{\sqrt{3}}\left({0 \atop1}\right)$ be a basis of the dual lattice $\mathcal T^*$, such that $b_i\cdot m_j=2\pi \delta_{i,j}$, for $i,j=1,2$; moreover,
we define $m_3=m_1-m_2=\frac{4\pi}{\sqrt{3}}\left({\sqrt{3}/2\atop-1/2}\right)$, so that $b_3\cdot m_3=0$. 

We consider a grain whose support is an infinite vertical strip of width $n$:
\begin{equation}\label{J.-2}\mathcal G=\mathcal G_a\cup \mathcal G_b\quad \text{with} \quad \mathcal G_a:=\{x=n_1b_1+n_2(b_2-b_3) : 0\le n_1\le n, n_2\in\mathbb Z\},
\quad \mathcal G_b:=\mathcal G_a+b_2.\end{equation}

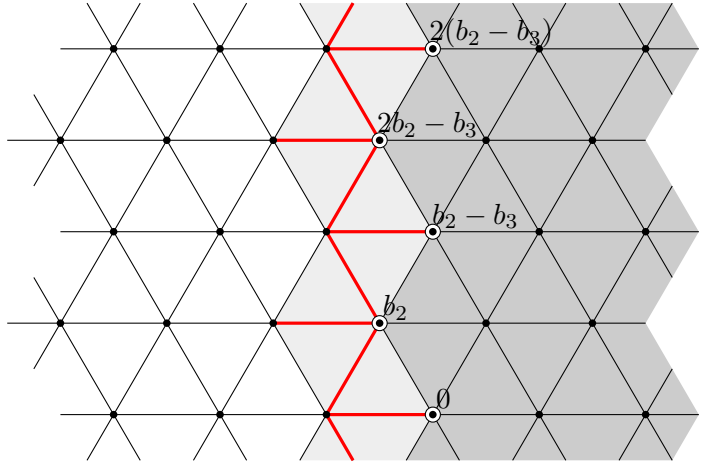
\begin{figure}[!h]
	\centering
	  \begin{tikzpicture}[scale=0.7]

\fill[gray!40] (-0.5,-0.5*1.73205) -- (0,0) -- (-1,1.73205) -- ++ (1,1.73205) -- ++ (-1,1.73205) -- ++ (1,1.73205) -- ++ (-0.5,0.5*1.73205) -- ++ (5,0) --
++ (0.5,-0.5*1.73205) -- ++ (-1,-1.73205) -- ++ (1,-1.73205) -- ++ (-1,-1.73205) -- ++ (1,-1.73205) -- ++ (-0.5,-0.5*1.73205) -- cycle;

\fill[gray!13] (-0.5,-0.5*1.73205) -- (0,0) -- (-1,1.73205) -- ++ (1,1.73205) -- ++ (-1,1.73205) -- ++ (1,1.73205) -- ++ (-0.5,0.5*1.73205) -- ++ (-2,0) --
++ (0.5,-0.5*1.73205) -- ++ (-1,-1.73205) -- ++ (1,-1.73205) -- ++ (-1,-1.73205) -- ++ (1,-1.73205) -- ++ (-0.5,-0.5*1.73205) -- cycle;

\draw (-7,0)--(5,0);
\draw (-7,2*1.73205)--(5,2*1.73205);
\draw (-7,4*1.73205)--(5,4*1.73205);
\draw (-8,1*1.73205)--(4,1*1.73205);
\draw (-8,3*1.73205)--(4,3*1.73205);
 \foreach \x in {0,...,3}
{
\draw (-6.5+2*\x,-0.5*1.73205)--(-1.5+2*\x,4.5*1.73205);
\draw (-6.5+2*\x,4.5*1.73205)--(-1.5+2*\x,-0.5*1.73205);
}

\draw (-7.5,0.5*1.73205) -- (-3.5,4.5*1.73205);
\draw (-7.5,2.5*1.73205) -- (-5.5,4.5*1.73205);
\draw (1.5,-0.5*1.73205) -- (4.5,2.5*1.73205);
\draw (3.5,-0.5*1.73205) -- ++ (1,1.73205);
\draw (-5.5,-0.5*1.73205) -- ++ (-2,2*1.73205);
\draw (-3.5,-0.5*1.73205) -- ++ (-4,4*1.73205);
\draw (1.5,4.5*1.73205) -- ++ (3,-3*1.73205);
\draw (3.5,4.5*1.73205) -- ++ (1,-1.73205);

\draw[red, very thick] (0,0) -- (-2,0);
\draw[red, very thick] (-1,1.73205) -- (-2,0);
\draw[red, very thick] (-1,1.73205) -- ++ (-2,0);
\draw[red, very thick] (-1,1.73205) -- ++ (-1,1.73205);
\draw[red, very thick] (0,2*1.73205) -- ++ (-2,0);
\draw[red, very thick] (-1,3*1.73205) -- ++ (-1,-1.73205);
\draw[red, very thick] (-1,3*1.73205) -- ++ (-2,0);
\draw[red, very thick] (-1,3*1.73205) -- ++ (-1,1.73205);
\draw[red, very thick] (0,4*1.73205) -- ++ (-2,0);
\draw[red, very thick] (-2,0) -- ++ (0.5,-0.5*1.73205);
\draw[red, very thick] (-2,4*1.73205) -- ++ (0.5,0.5*1.73205);

\draw (0,0) node[vertex,E] {};
\draw (0,2*1.73205) node[vertex,E] {};
\draw (0,4*1.73205) node[vertex,E] {};
\draw (-1,1.73205) node[vertex,E] {};
\draw (-1,3*1.73205) node[vertex,E] {};
\foreach \x in {-3,...,2}
{
\draw (2*\x,0) node[smallvertex] {};
\draw (2*\x,2*1.73205) node[smallvertex] {};
\draw (2*\x,4*1.73205) node[smallvertex] {};
\draw (-1+2*\x,1.73205) node[smallvertex] {};
\draw (-1+2*\x,3*1.73205) node[smallvertex] {};}

\draw (0.2,0.3) node {$0$};
\draw (-0.7,1.73205+0.3) node {$b_2$};
\draw (0.8,2*1.73205+0.3) node {$b_2-b_3$};
\draw (-0.1,3*1.73205+0.3) node {$2b_2-b_3$};
\draw (1.1,4*1.73205+0.3) node {$2(b_2-b_3)$};
\end{tikzpicture}
\caption{A portion of the grain: the dark gray area represents the grain, and the circled sites are those belonging to its left boundary. The light gray area is the left `boundary layer', and the
red bonds (the `boundary bonds') are those connecting sites in the grain with sites in its complement: those are bonds on which the slip field $\sigma$ in the minimization problem Eq.\eqref{mintobed}
may be non zero; correspondingly, the light gray faces are those where the `charge' $d\sigma$ may be non zero.}\label{fig:1}
\end{figure}

Note that the complement of the grain is disconnected and consists of two semi-infinite components, the left one, denoted $\mathcal G^c_L$, and the right one, denoted $\mathcal G^c_R$.
We let
\begin{equation}\label{J.-1}u_S(x)=\begin{cases} 0 & \text{if $x\in\mathcal G^c_L$}\\
Sx+\tau & \text{if $x\in\mathcal G$}\\
\tau_R & \text{if $x\in\mathcal G^c_R$}\end{cases}\end{equation}
with $S=S(\theta)=\sqrt{3}\,\theta\begin{pmatrix} 0 & 1 \\ -1 & 0\end{pmatrix}$ for some $\theta\in\mathbb R$
(the normalization factor $\sqrt3$ is chosen for later convenience, e.g.,
for having a prefactor $\frac{\theta}{2\pi}$, rather than $\frac{\theta}{2\pi\sqrt3}$, in eq.\eqref{J.1})
and $\tau,\tau_R$ to be fixed. We are interested in estimating the optimal grain energy per unit vertical length that, in analogy with the definition \eqref{eq:GSen} for the finite-grain case, is defined as follows:
\begin{equation}\label{J.-3}e_{\mathcal G}(S)=\liminf_{\Lambda\to\mathcal T}\frac1{\tfrac{\sqrt3}2N}\lim_{\epsilon\to 0^+}\inf_{\sigma\in \mathcal M_S^{(\epsilon)}(\mathcal G)}\inf_u H_{\text{AO}}(u,\sigma),\end{equation}
where we recall that $\Lambda=\Lambda^{(N)}$ is the 2D analogue of \eqref{LN}, whose vertical height is $\tfrac{\sqrt3}2N$ (which explains the normalization factor $1/({\tfrac{\sqrt3}2N})$ in the right side
of \eqref{J.-3}), and
$\mathcal M_S^{(\epsilon)}(\mathcal G)$ is the set of lattice-valued slip fields $\sigma$ such that there exist $\tau,\tau_R$ for which $\sigma,\tau,\tau_R$ realize the infimum of $\inf_{\tau,\tau_R}\inf_\sigma^*
H_{\text{AO}}(u_S,\sigma)$  within a precision $\epsilon$, where the $*$ on $\inf^*_\sigma$ indicates the constraint that the support of $\sigma$ is over the bonds connecting the grain with its complement, such as the red bonds of Fig.\ref{fig:1}.

Let us focus on the minimization problem defining the set $\mathcal M_S^{(\epsilon)}(\mathcal G)$, i.e.,
\begin{equation}\label{mintobed}\begin{split}&\big({\tfrac{\sqrt3}2N}\big)^{-1}\inf_{\tau,\tau_R}\inf_\sigma^*H_{\text{AO}}(u_S,\sigma)=\\
&=\big({\tfrac{\sqrt3}2N}\big)^{-1}\inf_{\tau,\tau_R}\inf_\sigma^*\frac12
\sum_{x\sim y}\big[(x-y)\cdot(u_S(x)-u_S(y)+\sigma(x,y))\big]^2.\end{split}\end{equation}
Note that by definition the only non-zero terms in the sum in the right side are those associated with boundary bonds $(x,y)$, with $x\in\mathcal G$ and $y\not\in\mathcal G$ (such as the red bonds of Fig.\ref{fig:1}): for such bonds
$u_{S}(x)-u_{S}(y)=Sx+\tau$, if $x$ belong to the left boundary, and $u_{S}(x)-u_{S}(y)=Sx+\tau-\tau_R$, if $x$ belong to the right boundary. The goal is to find a minimizer $\sigma(x,y)$ for $(x,y)$ a boundary bond. For this purpose, it is convenient to decompose $S$ into simple slip systems.
Recall that any $2\times2$ skew-symmetric matrix $A$ can be decomposed into simple slip systems, i.e., $
A=\sum_{l=1}^3 \xi_l \,b_l\otimes m_{n(l)}$,
for suitable coefficients $\xi_l$, where $m_{n_l}\in\mathcal T^*$ are the slip plane normals, namely: $m_{n(1)}=m_2$, $m_{n(2)}=m_1$, and $m_{n(3)}=m_3$. In particular, a simple computation shows that
\begin{equation}\label{J.1}S=S(\theta)=\frac\theta{2\pi}\big(b_1\otimes m_2-b_2\otimes m_1+b_3\otimes m_3\big).\end{equation}
Setting temporarily $\tau=0$,
Eq.\eqref{J.1} suggests the following choice for the slip-field minimizer for $(x,y)$ a boundary bond with $x\in\mathcal G$ and $y\not\in\mathcal G$:
\begin{equation}\sigma(x,y)= -b_1\langle \tfrac{\theta}{2\pi} x\cdot m_2\rangle+b_2 \langle \tfrac{\theta}{2\pi} x\cdot m_1\rangle-b_3 \langle \tfrac{\theta}{2\pi} x\cdot m_3\rangle+\tau_R \mathds 1_{y\in\mathcal G^c_R},\end{equation}
where $\langle x\rangle:=\lfloor x+\tfrac12\rfloor$ denotes the `nearest integer function'. On the left boundary (similar considerations hold for the right one), $x$ equals $n_2(b_2-b_3)$ or
$n_2(b_2-b_3)+b_2$ for some $n_2\in\mathbb Z$, depending on whether $x$ is in $\mathcal G_a$ or $\mathcal G_b$. Note that, for $x=n_2(b_2-b_3)$, we have:
$\tfrac1{2\pi}x\cdot m_2=2n_2$, $\tfrac1{2\pi}x\cdot m_1=n_2$,
and $\tfrac1{2\pi}x\cdot m_3=-n_2$. Therefore, on the left boundary, if $x$ equals $n_2(b_2-b_3)$ or $n_2(b_2-b_3)+b_2$, recalling that $b_2+b_2=-b_1$,
\begin{equation}\sigma(x,y)= -b_1(\langle 2\theta n_2\rangle+\langle \theta n_2\rangle),\label{eq:leftmin}\end{equation}
up to $O(\theta)$ corrections, which are present if $x$ has the form $n_2(b_2-b_3)+b_2$ (and are small, for $\theta$ small). The computation leading to \eqref{eq:leftmin} neglected the presence of $\tau$ in the
definition of $u_S$ and was not based on an exact minimization of the sum in the right side of \eqref{mintobed}. However, the patient reader can check that, by performing an exact minimization, one can choose
$\tau$ of order $\theta$ and $\sigma$ can be chosen as in \eqref{eq:leftmin}, up to a bounded $O(\theta)$ correction (details left to the patient reader). Similarly, the exact minimization along the right boundary leads to a slip field equal to the opposite of \eqref{eq:leftmin}, up to a bounded $O(\theta)$ correction.

In conclusion, neglecting $O(\theta)$ fluctuation terms in the boundary slip field, which are not expected to contribute to the optimal grain energy per unit vertical length at the dominant order in the limit
of small $\theta$ and large $n$, the optimal slip field for the minimization problem \eqref{mintobed}
equals $\mp b_1(\langle 2\theta n_2\rangle+\langle \theta n_2\rangle)$ on the boundary bonds of the grain with vertical coordinate $n_2$ (the minus and plus signs are for the left end right boundaries, respectively).
Notice that the charge distribution $d\sigma$ of such a slip field consists of isolated charges equal to $\pm b_1$ or $\pm 2b_1$ on suitable faces of the left and right boundaries (with opposite signs on the two boundaries),
vertically separated  in average by a distance $\frac1{2\theta}$. The average density of such boundary charge distribution
equals $\pm b_1/(3\theta)$, the same as the charge distribution \eqref{wall_dist}, provided
we identify $m$ with $1/(3\theta)$. 

\section{The cellular complex of the FCC lattice}\label{app.FCC}

We recall that the three-dimensional FCC lattice $\mathcal L$ is the Bravais lattice with basis vectors $b_1,b_2,b_3$, as in \eqref{b123}. We also define the dual lattice $\mathcal L^*$ as the Bravais lattice with basis vectors $m_1,m_2,m_3$, defined as
 \begin{equation} m_1= \sqrt{2}\pi \begin{pmatrix}-1\\1\\1\end{pmatrix}, \quad m_2=\sqrt{2}\pi \begin{pmatrix}1\\-1\\1\end{pmatrix}, \quad m_3= \sqrt{2}\pi \begin{pmatrix}1\\1\\-1\end{pmatrix}.
 \label{eq:mi}\end{equation}
Note that $b_i\cdot m_j=2\pi \delta_{i,j}$, with $i,j=1,2,3$. For later reference, we also let $m_4=m_1+m_2+m_3= \sqrt{2}\pi \begin{pmatrix} 1 \\ 1 \\ 1 \end{pmatrix}$.
In terms of these definitions, the cellular complex associated with the FCC lattice is defined in terms of the following cells:
\begin{enumerate}
\item The vertices $x\in E_0$ are the vertices of $\mathcal L$, of the form $x=n_1b_1+n_2b_2+n_3b_3$.
\item The edges $e\in E_1$ are the ordered pairs of nearest neighbour vertices of $\mathcal L$, namely pairs $(x,x')$ with $x'-x=\pm b_l$, $l=1,\ldots, 6$: here $b_1,b_2,b_3$ are the same as \eqref{b123},
and we recall that $b_4= b_3-b_2$, $b_5= b_1-b_3$, $b_6=b_2-b_1$.
The action of the boundary operator on $E_1$ is defined by: $\partial (x_1,x_2) =\{x_1,x_2\}$, for any $(x_1,x_2)\in E_1$. Note that,
in the notation of Sect.\ref{subdec}, $\partial e=V(e)$, $\forall e\in E_1$, where $V(e)$ is the set of vertices of $e$.
\item The faces $f\in E_2$ can be identified with the 3-cycles of nearest-neighbor vertices $(x_1,x_2,x_3)$ such that $(x_i,x_j)\in E_1$, for $i\neq j$, $i,j=1,2,3$. There are 8 fundamental types of faces:
\begin{eqnarray*}
&f_1= (0, b_2,b_3), \qquad &f_5= (0, b_1, b_2),\\
&f_2= (0, -b_2, -b_3), \qquad &f_6= (0, -b_1, -b_2),\\
&f_3= (0, b_3, b_1), \qquad &f_7= (0, b_6,-b_5),\\
&f_4= (0, -b_3, -b_1), \qquad &f_8= (0, -b_6,b_5).
\end{eqnarray*}
plus those with opposite orientations:
\begin{eqnarray*}
&f_1'= (0, b_3,b_2),\qquad &f_5'= (0, b_2,b_1),\\
&f_2'= (0, -b_3,-b_2),\qquad &f_6'= (0, -b_2,-b_1),\\
&f_3'= (0, b_1,b_3),\qquad &f_7'= (0, -b_5,b_6),\\
&f_4'= (0, -b_1,-b_3),\qquad &f_8'= (0, b_5,-b_6).
\end{eqnarray*}
The orientation $o(f)$ of each face $f=(x_1,x_2,x_3)$ can be identified with the normal vector computed via the `right-hand rule', that is, $o((x_1,x_2,x_3))=
(x_2-x_1)\times (x_3-x_1)$.
Note, in particular, that the orientation of the fundamental faces $f_j, f_j'$ are: $o(f_j)=-o(f_j')=\frac1{\sqrt{6} \pi}m_{\lceil j/2\rceil}$.
The set $E_2$ can be obtained by translating the fundamental faces $\{f_1,\ldots, f_8'\}$ by the elements of $\LL$.
The action of the boundary operator on $E_2$ is defined by: $\partial (x_1,x_2,x_3) =\{(x_1,x_2), (x_2,x_3), (x_3,x_1)\}$, for any $(x_1,x_2,x_3)\in E_2$. The set of vertices of a face is simply $V((x_1,x_2,x_3))=\{x_1,x_2,x_3\}$.
For later reference, we also let $G(f)=\frac13\sum_{x\in V(f)}x$ be the baricenter of $f$.
\item The volumes $v\in E_3$ are the tetrahedra and the octahedra obtained by translating those shown in Fig.~\ref{fig.tile} by the elements of $\mathcal L$, together with an orientation $o(v)\in\{\pm\}$;
we shall refer to the positive orientation as to the `outward' orientation, and to the negative as to the `inward'.
Any element of $v\in E_3$ can be uniquely identified with the pair $(V(v), o(v))$, where $V(v)$ is the vertex set of $v$ (note, in fact, that the un-oriented volume associated to $v$ is the convex hull of $V(v)$).
The vertex sets of the r-tetrahedra in Fig.~\ref{fig.tile} are of the form $x+\{0,b_1,b_2,b_3\}$, with $x\in\mathcal L$.
The vertex sets of the g-tetrahedra in Fig.~\ref{fig.tile} are of the form $x+\{0,-b_1,-b_2,-b_3\}$, with $x\in\mathcal L$.
The vertex sets of the octahedra in Fig.~\ref{fig.tile} are of the form $x+\{b_1,b_2,b_3,b_1+b_2,b_1+b_3,b_2+b_3\}$. with $x\in\mathcal L$. For later reference, we also let $G(v)=\frac1{|V(v)|}\sum_{x\in V(v)}x$ be the baricenter of $v$.
The boundary operator on $E_3$ is defined by the condition that its action on $v\in E_3$ returns the faces of its boundary, with the outward orientation, if $o(v)=+$, and the inward orientation, if $o(v)=-$.
In formulae, $\partial v=\{f\in E_2:\ V(f)\subset V(c)\quad \text{and}\quad {\text{sign}}[(G(f)-G(v))\cdot o(f)]=o(v)\}$.
\end{enumerate}

\begin{figure}\begin{center}
\includegraphics[width=.45\textwidth]{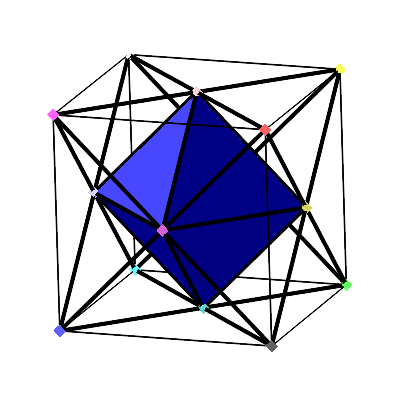}
\includegraphics[width=.45\textwidth]{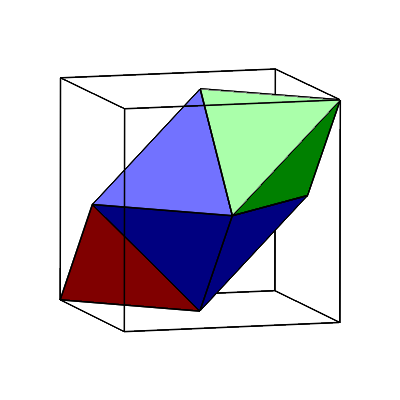}\end{center}
\caption{The left left panel shows the face centered cubic structure. Bonds are shown with bold lines. The edges of the cube are indicated with thin lines, they correspond to next-nearest neighbors.\\
The primitive unit cell of the FCC lattice is shown in the right panel. It can be dissected into a regular octahedron and two regular tetrahedra, which are the 3-cells of our cellular complex. The figure shows that there are two inequivalent type of tetrahedra: red and green.
We shall call r-tetrahedra (resp. g-tetrahedra) those that can be translated into the red (resp. green) tetrahedron.}
\label{fig.tile}
\end{figure}

\section{On the operator $A$ and its inverse}
\label{app.c0}

In this appendix we discuss and prove a few basic properties of the operator $\dd^*_0 B\dd_0$, both in the case that it acts on the $0$-forms associated with the infinite FCC lattice $\mathcal L$, and in the
case that it acts on those associated with a finite box $\Lambda{=\Lambda^{(N)}}\subset \mathcal L$, of the form \eqref{LN}, with Dirichlet boundary conditions. In order to avoid confusion between the two
cases, in this appendix (contrary to the rest of the paper) we denote by $\form^0$, resp. $\form^0_\Lambda$, the set of $0$-forms associated with the infinite lattice, resp. with the box $\Lambda$ with Dirichlet boundary conditions. Correspondingly, we denote by $A$, resp. $A_\Lambda$, the operator $\dd^*_0 B\dd_0$ acting on $\form^0$, resp. $\form^0_\Lambda$.
Note that  $A_\Lambda$ can be rewritten as \begin{equation}A_\Lambda=\sum_{l=1}^6\Pi_l \Delta_l,\label{b1}\end{equation} where $\Pi_l=b_l\otimes b_l$ is the projection along $b_l$  and $\Delta_l:\form^0_\Lambda\to \form^0_\Lambda$ is the (non-negative) one-dimensional Laplacian in the direction $b_l$, namely, if $f\in\Omega_0^\Lambda$, then $\Delta_l f(z)=2f(z)-f(z+b_l)-f(z-b_l)$.

\subsection{Invertibility of $A_\Lambda$}
Using \eqref{b1} and the fact that $\Delta_l\ge 0$, we find $A_\Lambda\ge \tilde A_\Lambda:=\sum_{l=1}^3 \Pi_l\Delta_l$. The operator $\tilde A_\Lambda$ acts diagonally on the $k$ index of the Dirichlet basis $\{u_{k,j}\}^{j=1,2,3}_{k\in \Lambda^*_D}$, where, if $m_1,m_2,m_3$ are the basis vectors of $\mathcal L^*$, see \eqref{eq:mi},
$$\Lambda_D^*:=\{k=k_1m_1+k_2m_2+k_3m_3: \ k_l=\scalebox{1.1}{$\frac{n_l}{2(N+1)}$}\ \ \text{with}\ \ n_l=1,\ldots, N\}$$
and
$$u_{k,j}(x)=\Big(\frac{2}{N+1}\Big)^{3/2}\Big[\prod_{l=1}^3\sin(2\pi k_l x_l)\Big]e_j,$$
with $k_l=\frac1{2\pi}k\cdot b_l$ and $e_j$ the $j$-th standard Euclidean basis vector. We have: $\tilde A_\Lambda u_{k,j}(x)=2\big[\sum_{l=1}^3 \alpha_l\Pi_l\big]_{ij} u_{k,i}(x)$,
where $\alpha_l:=1-\cos(2\pi k_l)$, which is positive for $k\in \Lambda_D^*$. Of course, $2\sum_{l=1}^3 \alpha_l\Pi_l\ge 2\min\{\alpha_1,\alpha_2,\alpha_3\}\sum_{l=1}^3 \Pi_l$.
By using the explicit form of $\Pi_l$, we get $2\sum_{l=1}^3\Pi_l=\begin{pmatrix} 2 & 1 & 1 \\ 1 & 2 & 1 \\ 1  & 1 & 2 \end{pmatrix}$, whose smallest eigenvalue is $1$, that is,
$2\sum_{l=1}^3\Pi_l\ge \mathds 1$.
In conclusion, $A_\Lambda\ge \tilde A_\Lambda\ge \min_{k\in\Lambda_D^*} (1-\cos(2\pi k_l) )\mathds 1$, which is positive, and, therefore, proves the invertibility of $A_\Lambda$ for any finite box $\Lambda$.

\subsection{Proof of \eqref{upb}}
In order to prove \eqref{upb}, we derive upper and lower bounds on $\langle g,A^{-1}_\Lambda g \rangle$, that is, the argument of the limit in the left side of \eqref{upb}, in the notation of this appendix.
For the reader's convenience, we recall that $g=g_{v_0;{x}}={{\bf 1}_x}v_0$, where ${x\in}\mathcal L$ and $v_0\in\mathcal L^*$. With no loss of generality (since we are interested in the thermodynamic limit $\Lambda{\to}\mathcal L$), we assume that $x\in\Lambda$. The important feature of $g$ to be used in the following is that it is compactly supported, with support contained in $\Lambda$. Note that
\begin{equation}\label{app.var}-\langle g,A^{-1}_\Lambda g\rangle=\min_{u\in\form^0_\Lambda} \big(\langle u, A u\rangle -2\langle u,g\rangle\big).\end{equation}
We recall that the minimum in the right side is over the compactly supported $0$-forms $u:\mathcal L\to\mathbb R^3$, whose support is contained in $\Lambda$. In order to get a lower bound, we
write the quadratic function $\langle u, A u\rangle -2\langle u,g\rangle$ in Fourier space, by using the convention $u(z)=\int_{\mathcal B}\frac{dk}{|\mathcal B|} \hat u(k) e^{-ikz}$, see the line after \eqref{upb} for the definition of $\mathcal B$; then, we complete the square and drop the non-negative $u$-dependent term, thus getting
\begin{equation}\label{app.var2}-\langle g,A^{-1}_\Lambda g\rangle\ge -\int_{\mathcal B} \frac{dk}{|\mathcal B|} \hat g(-k)\cdot \hat A^{-1}(k)\hat g(k),\end{equation}
with $\hat g(k)$ and $\hat A(k)$ defined as in \eqref{hatgk}-\eqref{eqAk}. As anticipated in Sect.\ref{sec.swme}, $\hat A^{-1}(k)$ is singular only at $k=0$, close to which it behaves like $\sim k^{-2}$, see below for a proof: therefore, the right side of \eqref{app.var2} is finite for any compactly supported $g$.

In order to get an upper bound, we use the test function $u_*(z):={\chi_\Lambda}(z)u_\infty(z)$, where {$\chi_\Lambda(z):=\min\{1,4\,\textrm{dist}(z,\Lambda^c)/N\}$ (recall that $N$ is the side of the box $\Lambda$, see 	\eqref{LN})} and 
\begin{equation} u_\infty(z)=\int_{\mathcal B} \frac{dk}{|\mathcal B|} \hat A^{-1}(k)\hat g(k) e^{-ik\cdot z},\label{uinfty}\end{equation}
thus getting, {for $N$ sufficiently large,}
\begin{equation}\label{app.var3}-\langle g,A^{-1}_\Lambda g\rangle\le \langle u_*, A u_*\rangle -2\langle u_*,g\rangle=\sum_{e\in E_1} \big(\dd u_*(e)\cdot \delta e\big)^2-2\langle u_\infty,g\rangle,
\end{equation}
where we recall that, for an edge $e=(x,y)$, $\delta e=y-x$, and in the last identity we used the fact that, {thanks to the definition of $\chi_\Lambda$, 
for $N$ sufficiently large} the support of $g={{\bf 1}_{x}}v_0$ is contained in ${\textrm{supp}{\bf 1}(\chi_\Lambda=1)\subset}\Lambda$, so that in particular
$u_*=u_\infty$ on the support of $g$. Moreover, {letting $u(z_e):=\frac12 (u(x)+u(y))$ for an edge $e=(x,y)$,}
\begin{equation}\label{app.var4}
\begin{split}\sum_{e\in E_1} \big(\dd u_*(e)\cdot \delta e\big)^2&\le \sum_{e\in E_1} \big(\dd u_\infty(e)\cdot \delta e\big)^2+{\sum_{e\in E_1}
(\dd \chi_\Lambda(e))^2\,(u_\infty(z_e)\cdot\delta e)^2}\\
&{\le \langle u_\infty, A
u_\infty\rangle+ \frac{C}{N^2}\sum_{\substack{z\in\mathcal L:\\ \textrm{dist}(z,\Lambda^c)\le N/4+2}}|u_\infty(z)|^2},\end{split}\end{equation}
{for some universal constant $C>0$.} 
Plugging \eqref{app.var4} in \eqref{app.var3}, and using the fact that $\langle u_\infty, A u_\infty\rangle-2\langle u_\infty,g\rangle$ is equal to the right side of \eqref{app.var2}, we find
\begin{equation}\label{app.var5}-\langle g,A^{-1}_\Lambda g\rangle\le -\int_{\mathcal B} \frac{dk}{|\mathcal B|} \hat g(-k)\cdot \hat A^{-1}(k)\hat g(k)+
{\frac{C}{N^2}}
\sum_{\substack{{z\in\mathcal L:}\\ {\textrm{dist}(z,\Lambda^c)\le N/4+2}}}|u_\infty(z)|^2.\end{equation}
We will prove below, see Sect.\ref{appb5}, that
\begin{equation}{|u_\infty(z)|=
\Big|\int_{\mathcal B} \frac{dk}{|\mathcal B|}\hat A^{-1}(k)v_0\, e^{ik\cdot (x-z)}\Big|\le c_1\frac{1+\log|x-z|}{|x-z|}},\label{cinfty}\end{equation}
for some positive constant $c_1$, so that, {for $N$ large enough and a suitable $C'>0$,}
\begin{equation}\label{ult}{\frac{C}{N^2}\sum_{\substack{z\in\mathcal L:\\ \textrm{dist}(z,\Lambda^c)\le N/4+2}}|u_\infty(z)|^2\le C'\frac{\log^2 N}N}. \end{equation}
Now, for $\Lambda=\Lambda^{(N)}$ large enough (see \eqref{LN}), the right side of \eqref{ult} vanishes as $N\to\infty$, which concludes the proof of \eqref{upb}.

\subsection{Invertibility of $\hat A(k)$ for $k\neq0$}
Let us rewrite $\hat A(k)$ in \eqref{eqAk} as
$\hat A(k)=2\sum_{l=1}^6 \alpha_l\Pi_{l}$, where $\alpha_l:=1-\cos(k\cdot b_l)\ge 0$. By using the explicit form of $\Pi_l$, following from the explicit expression of the vectors $b_1,\ldots, b_6$, we find:

\begin{equation} \hat A(k)=\begin{pmatrix}
\alpha_2+\alpha_3+\alpha_5+\alpha_6 & \alpha_3-\alpha_6 & \alpha_2-\alpha_5 \\
\alpha_3-\alpha_6 & \alpha_1+\alpha_3+\alpha_4+\alpha_6 &  \alpha_1-\alpha_4 \\
\alpha_2-\alpha_5 & \alpha_1-\alpha_4 & \alpha_1+\alpha_2+\alpha_4+\alpha_5  \end{pmatrix}, \end{equation}
whose determinant is
\begin{equation}\label{Akalpha}\begin{split}
\det \hat A(k)=4\big[&
\alpha_1\alpha_2\alpha_3+\alpha_1\alpha_2\alpha_4+\alpha_1\alpha_2\alpha_5+\alpha_1\alpha_3\alpha_4+\alpha_1\alpha_3\alpha_6+\alpha_1\alpha_4\alpha_5\\
+&\alpha_1\alpha_4\alpha_6+\alpha_1\alpha_5\alpha_6 +
\alpha_2\alpha_3\alpha_5+\alpha_2\alpha_3\alpha_6+\alpha_2\alpha_4\alpha_5\\ +&\alpha_2\alpha_4\alpha_6+\alpha_2\alpha_5\alpha_6+\alpha_3\alpha_4\alpha_5+\alpha_3\alpha_4\alpha_6+\alpha_3\alpha_5\alpha_6\big].\end{split}\end{equation}
Note that all the terms in the sum are non-negative, bacause $\alpha_l\ge 0$. We want to argue that $(\alpha_1,\alpha_2,\alpha_3)\neq (0,0,0)\Rightarrow \det\hat A(k)\neq 0$. Recall that
\begin{equation} \label{defbl} b_4= b_3-b_2,\quad b_5= b_1-b_3,\quad b_6=b_2-b_1,\end{equation}
so that $\alpha_2=\alpha_3=0\Rightarrow \alpha_4=0$, etc.

If $\alpha_1,\alpha_2,\alpha_3$ are all positive, then $\det\hat A(k)>0$, simply because the first term in the right side of \eqref{Akalpha} is positive.

Suppose now that two of the elements of the triple
$(\alpha_1,\alpha_2,\alpha_3)$ are positive and third is zero, say $\alpha_1,\alpha_2>0$ and $\alpha_3=0$ (the other cases are treated analogously);
from \eqref{defbl}, it follows that $\alpha_4,\alpha_5>0$. Therefore,
$\det\hat A(k)>0$, because the factor $\alpha_1\alpha_2\alpha_4$, among others, is positive.

Finally, suppose that one of the elements of the triple
$(\alpha_1,\alpha_2,\alpha_3)$ is positive and the other two are zero, say $\alpha_1>0$ and $\alpha_2=\alpha_3=0$ (the other cases are treated analogously);
from \eqref{defbl}, it follows that $\alpha_5,\alpha_6>0$. Therefore, $\det\hat A(k)>0$, because the factor $\alpha_1\alpha_5\alpha_6$ is positive.

This completes the proof that $\hat A(k)$ is invertible iff $k\neq 0$ mod $\mathcal L^*$.

\subsection{Proof of \eqref{c0}}
By expanding $\hat A(k)$ in Taylor series in $k$ around $k=0$, we get
$$\hat A(k)=\sum_{l=1}^6 (k\cdot b_l)^2\Pi_{l}+O(k^3)\equiv \hat A_0(k)+O(k^3).$$
By using the explicit expression of the projectors $\Pi_l$, we find
\begin{eqnarray}\hat A_0(k)&=&\frac14\Big[(k_2+k_3)^2\begin{pmatrix} 0 & 0 & 0 \\ 0 &1 & 1\\ 0 & 1 & 1\end{pmatrix}+(k_2-k_3)^2\begin{pmatrix} 0 & 0 & 0 \\ 0 &1 & -1\\ 0 & -1 & 1\end{pmatrix}+\,\text{permutations}\Big]\nonumber\\
&=& \frac{k^2}{2}{\mathds 1}+\begin{pmatrix} k_1^2/2 & k_1k_2 & k_1k_3 \\ k_1k_2 &k_2^2/2 & k_2k_3\\ k_1k_3 & k_2k_3 & k_3^2/2\end{pmatrix}=\frac{k^2}{2}\mathds 1+ k\otimes k -\frac12\text{diag}(k_1^2,k_2^2,k_3^2),\end{eqnarray}
from which the upper bound in \eqref{c0} follows. We now get a lower bound on the eigenvalues of $\hat B_0(k):=\hat A_0(k)-\frac{k^2}2\mathds 1$. The characteristic polynomial of $\hat B_0(k)$ is
$$P(\lambda)=-\lambda^3+\frac{k^2}2\lambda^2+\frac34\lambda(k_1^2k_2^2+k_1^2k_3^2+k_2^2k_3^2)+\frac58k_1^2k_2^2k_3^2,$$
which has three real roots. It is easy to see that the smallest root is larger than $-ak^2$, with $a=\frac{\sqrt5-1}{4}$. This immediately follows from the fact that $P(-ak^2)\ge 0$ and $P'(\lambda)\le 0$, $\forall \lambda\le -ak^2$. In order to check the first of these two inequalities, note that
$$P(-ak^2)\ge k^6\big(a^3+\frac{a^2}{2}-\frac{a}4), $$
simply because $k^{-4}(k_1^2k_2^2+k_1^2k_3^2+k_2^2k_3^2)\le \frac13$, for all $k\neq 0$. Moreover, recalling that $a=(\sqrt5-1)/4$, we find that $a^3+\frac{a^2}{2}-\frac{a}4=0$, which implies $P(-ak^2)\ge0$. Finally,
in order to see that $P'(\lambda)\le 0$, $\forall \lambda\le -ak^2$, note that, if $\lambda\le -ak^2$, then
$$P'(\lambda)=-3\lambda^2+k^2\lambda+\frac34 (k_1^2k_2^2+k_1^2k_3^2+k_2^2k_3^2)\le k^4 \Big(-3a^2-a+\frac34 \frac{k_1^2k_2^2+k_1^2k_3^2+k_2^2k_3^2}{k^4}\Big). $$
Using again the fact that $k^{-4}(k_1^2k_2^2+k_1^2k_3^2+k_2^2k_3^2)\le \frac13$, we find that, for all $\lambda\le -a k^2$,
$$P'(\lambda)\le k^4 \Big(-3a^2-a+\frac14\Big),$$
which is negative for $a=(\sqrt5-1)/4$. In conclusion, $\hat B_0(k)=\hat A_0(k)-\frac{k^2}2\mathds 1\ge -a k^2$, from which the lower bound in \eqref{c0} follows.

\subsection{Proof of \eqref{logbbis} and \eqref{cinfty}}\label{appb5} {Both \eqref{logbbis} and \eqref{cinfty} follow from
\begin{equation}\label{eppero} \Big|\int_{\mathcal B}\frac{dk}{|\mathcal B|}(\hat A^{-1}(k)v_0)_{l}\, e^{-ik\cdot x}\Big|\le (\textrm{const.})\frac{1+\log|x|}{|x|},\end{equation}
which is valid for any $l\in\{1,2,3\}$. In order to prove \eqref{eppero}, we assume that $|x|\ge \epsilon^{-1}$} for an arbitrary, sufficiently small, $\epsilon$, and multiply the left side by $|x_j|$, with $j\in\{1,2,3\}$. 
Then, we rewrite it as:
\begin{eqnarray}\Big|x_j \int_{\mathcal B}\frac{dk}{|\mathcal B|}{(}\hat A^{-1}(k)v_0{)_l}\, e^{-ik\cdot x}\Big|&=& \Big| \int_{\mathcal B}\frac{dk}{|\mathcal B|}{(}\hat A^{-1}(k)v_0{)_l} \partial_{k_j}e^{-ik\cdot x}\Big|\nonumber\\
&=&\Big| \int_{\mathcal B}\frac{dk}{|\mathcal B|}{(}\partial_{k_j}\hat A^{-1}(k)v_0{)_l} e^{-ik\cdot x}\Big|.\label{eq95}\end{eqnarray}
Note that $\partial_{k_j}\hat A^{-1}(k)=-\hat A^{-1}(k)\cdot \partial_{k_j}\hat A(k) \cdot\hat A^{-1}(k)$, with $\partial_{k_j}\hat A(k)=2\sum_{l=1}^6 \Pi_{l}(b_l)_j\sin(k\cdot b_l)$. Recalling that $\hat A(k)$ is even, it is singular iff $k=0$, and, for $k$ close to zero, it can be bounded from above and below by (const.)$k^2$, we find that $\partial_{k_j}\hat A^{-1}(k)$ is odd, it is singular iff $k=0$ and, close to the singularity, it can be bounded from above by (const.)$|k|^{-3}$.
Therefore, the right side of \eqref{eq95} can be rewritten as
$$\Big| \int_{\mathcal B}\frac{dk}{|\mathcal B|}{(}\partial_{k_j}\hat A^{-1}(k)v_0{)_l} \sin(k\cdot x)\Big|$$
and, in order to bound it from above, we multiply the integrand by $1=\chi(k)+(1-\chi(k))$, where $\chi(k)$ is a positive, monotone, $C^\infty$ radial function, equal to $1$ for $|k|\le \epsilon$ and equal to $0$ for $|k|\ge 2\epsilon$.
Now, the term associated with $(1-\chi(k))$ is the Fourier transform of a $C^\infty$ function and, therefore, it decays faster than any power in real space. The term associated with
$\chi(k)$ can be bounded as follows:
$$({\rm const.})\Big( \int_{|k|\le |x|^{-1}}dk\, \frac{|\sin(k\cdot x)|}{|k|^3}+ \int_{ |x|^{-1}\le |k|\le 2\epsilon}dk\, \frac{1}{|k|^3}\Big)\le ({\rm const.})\log|x|. $$
Putting things together, we obtain that, if $|x|\ge \epsilon^{-1}$, then
$$ |x_j|\,\Big| \int_{\mathcal B}\frac{dk}{|\mathcal B|}{(}\hat A^{-1}(k)v_0{)_l} e^{-ik\cdot (x-y)}\Big|\le({\rm const.})\log|x|.$$
Summing over $j$ from $1$ to $3$, we get the desired estimate, {\eqref{eppero}.}

\section{Two technical estimates on cluster expansion}\label{app.CE}

\subsection{Proof of \eqref{CE.1}}
Starting from the definition of $z(\beta,q)$, eq.\eqref{zbetaq}, and using the bound on $\lambda(x)$ stated one line after \eqref{qf},
\begin{equation} |z(\beta,q)|\le\sum_{n\ge 1}\sum_{\substack{q_1,\ldots, q_n\in\form^2_*:\\ q_1+\cdots+q_n=q}}  \Big[\prod_{i=1}^n\mathds 1_{X_i\,{\text{is}\, {\text{connected}}}}\Big]
\Big[\prod_{i=1}^n\Big(\prod_{f\in X_i} e^{-\frac\beta2 w_0|q_i(f)|^2}\Big)\Big]\big|\varphi(X_1,\ldots,X_n)\big|,\label{zbetaq.bis}\end{equation}
We now split the exponential factor in two parts, $e^{-\frac\beta2 w_0|q_i(f)|^2}=e^{-\frac\beta4 w_0|q_i(f)|^2}e^{-\frac\beta4 w_0|q_i(f)|^2}$, and bound the product of $e^{-\frac\beta4 w_0|q_i(f)|^2}$
as
\begin{equation}\label{bl1}\prod_{i=1}^n\Big(\prod_{f\in X_i} e^{-\frac\beta4 w_0|q_i(f)|^2}\Big)\le e^{-\frac\beta4 w_0\|q\|_1},\end{equation}
where we used that (recall that $q=\sum_i q_i$)
$$\sum_i\sum_{f\in X_i}|q_i(f)|^2\ge \sum_i\sum_{f\in X_i}|q_i(f)|\ge\sum_{f\in \cup_i X_i}|\sum_i q_i(f)|\equiv \|q\|_1.$$
If we plug \eqref{bl1} in \eqref{zbetaq.bis} and then weaken the constraint $q_1+\cdots q_n=q$ into $\cup_i X_i=\supp(q)$, we obtain
\begin{equation} |z(\beta,q)|\le e^{-\frac\beta4 w_0\|q\|_1}\sum_{n\ge 1}\sum_{\substack{X_1,\ldots,X_n\ {\rm connected}:\\ \cup_iX_i=\supp(q)}}
\zeta(X_1)\cdots\zeta(X_n)\big|\varphi(X_1,\ldots,X_n)\big|,\label{zbetaq.tris}\end{equation}
with
\begin{equation}\label{defzeta}\zeta(X):=
\Big(\sum_{\substack{b\in\mathcal L:\\ b\neq 0}}e^{-\frac\beta4 w_0|b|^2}\Big)^{|X|}.\end{equation}
Eq.\eqref{zbetaq.tris} can be further bounded from above as
\begin{eqnarray} |z(\beta,q)|&\le& e^{-\frac\beta4 w_0\|q\|_1}\sum_{X_1\, {\rm connected}}\zeta(X_1)\big(1-\delta(X_1,\supp(q))\big)\cdot\label{zbetaq.tetris}\\
&\cdot& \Big[1+\sum_{n\ge 2}\sum_{\substack{X_2,\ldots,X_n\\ {\rm connected}}}\zeta(X_2)\cdots\zeta(X_n)\big|\varphi(X_1,\ldots,X_n)\big|\Big].\nonumber\end{eqnarray}
Now, if $a'(X)$ is such that $\sum_{X\,{\rm connected}}\zeta(X)e^{a'(X)}(1-\delta(X,X_*))\le a'(X_*)$
for any fixed, connected, non-empty $X_*$, then the sum in square brackets in the second line is bounded from above by $e^{a'(X)}$, see \cite[Theorem 5.4]{FV}.
In our case, if $\beta$ is sufficiently large, thanks to the definition of $\zeta(X)$, eq.\eqref{defzeta}, we can choose $a'(X)=e^{-\beta w_0/8} |X|$. Therefore,
\begin{equation}\begin{split} |z(\beta,q)|&\le e^{-\frac\beta4 w_0\|q\|_1}\sum_{X_1\, {\rm connected}}\zeta(X_1)e^{a'(X_1)}\big(1-\delta(X_1,\supp(q))\big)\nonumber\\
&\le e^{-\frac\beta4 w_0\|q\|_1}a'(\supp(q))=
e^{-\frac\beta4 w_0\|q\|_1}e^{-\frac{\beta}{8}w_0}|\supp(q)|,\end{split}\label{zbetaq.pentatris}\end{equation}
which is the desired estimate.

\subsection{Proof of \eqref{CE.2}}
Plugging \eqref{CE.1} in the left side of \eqref{CE.2}, and using the fact that $|\supp(q)|\le \|q\|_1$ and $\|q\|_2\le \|q\|_1$, we find
\begin{equation}\label{CE.2bis}\sum_{q\in\form^2_*}|z(\beta,q)|\, \|q\|_2^8\ \mathds 1(B(q)\ni e,e')\le e^{-\frac{\beta}{8}w_0}\sum_{q\in\form^2_*}
e^{-\frac\beta4 w_0\|q\|_1}\|q\|_1^9 \mathds 1(B(q)\ni e,e'). \end{equation}
We now weaken the constraint that $B(q)\ni e,e'$ into $\|q\|_1\ge {\rm dist}(e,e')$ and find that, for $\beta$ large enough,
\begin{equation}\begin{split}&
\sum_{q\in\form^2_*}|z(\beta,q)| \|q\|_2^8 \mathds 1(B(q)\ni e,e')\le \\ &\le e^{-\frac{\beta}{8}w_0}\sum_{q:\, \|q\|_1\ge {\rm dist}(e,e')}
e^{-\frac\beta4 w_0\|q\|_1}\|q\|_1^9 \le e^{-\frac{\beta}{8}w_0(1+{\rm dist}(e,e'))},\end{split}\end{equation}
as desired.

\bigskip

\bigskip
\footnotesize
\noindent\textit{Acknowledgments.}
This work has been supported by the European Research Council (ERC) under the European Union's Horizon 2020 research and innovation programme
(ERC CoG UniCoSM, grant agreement n.724939).
The authors wish to thank Roland Bauerschmidt and Adriana Garroni for very helpful discussions during early stages of this project.
A.G. would like to thank Massimiliano Pontecorvo for useful comments and references on discrete exterior calculus, {and Robin Reuvers 
for useful comments on the definition of the observable for testing translational symmetry breaking and on the null directions of the lattice Green's function.}

\end{document}